\DeclareMathAlphabet{\mathbbb}{U}{bbold}{m}{n}
\theoremstyle{definition}
\newtheorem{defi}{Definition}[section]
\theoremstyle{plain}
\newtheorem{theo}[defi]{Theorem}
\newtheorem{lemma}[defi]{Lemma}
\newtheorem{prop}[defi]{Proposition}
\newtheorem{cor}[defi]{Corollary}
\theoremstyle{remark}
\newtheorem*{rmk}{Remark}
\title{Prismatic Crystals for schemes in characteristic $p$}
\author{Jiahong Yu ${}^*$}
\thanks{${}^*$Academy of Mathematics and Systems Science, Chinese Academy of Sciences} 
\thanks{${}^*$Morningside Center of Mathematics, CAS}
\def\Z{\mathbb{Z}}
\def\N{\mathbb{N}}
\def\Prism{\mathbbb{\Delta}}
\def\sq{\square}
\def\a{\mathfrak{a}}
\def\D{\mathbb{D}}
\def\M{\mathbb{M}}
\def\Ord{\mathbf{\Delta}}
\def\ep{\epsilon}
\def\C{\mathscr{C}}
\def\inte{\mathcal{O}}
\def\Spec{\mathrm{Spec}}
\def\X{\mathfrak{X}}
\def\dif{\mathrm{d}}
\def\Spec{\mathrm{Spec}}
\def\Spf{\mathrm{Spf}}
\def\DR{\mathrm{DR}}
\def\et{\mathrm{\text{\'et}}}
\def\shv{\mathrm{Shv}}
\def\nil{\text{$\mathrm{nil}$}}
\begin{document}

\pagestyle{fancy}
\fancyhf{}
\fancyhead[R]{\thepage}
\renewcommand{\headrulewidth}{0pt}

\begin{abstract}
{Let $(A,\delta_A)$ be a crystalline prism and let $\mathfrak X$ be a finite type $A/p$-scheme admitting a Koszul-regular closed immersion into a smooth formal $A$-scheme $Y$. We construct a sheaf of prismatic envelopes $\Prism_Y(\mathfrak X)$ attached to a Frobenius lift modulo $p^2$ on $Y$, prove that prismatic crystals on $(\mathfrak X/A)_{\Prism}$ are equivalent to integrable topologically quasi-nilpotent $p$-connections on $\Prism_Y(\mathfrak X)$, and identify their prismatic cohomology with the corresponding de Rham complex. When a global Frobenius lift is available, a lifted Ogus--Vologodsky functor gives an equivalence between $p$-connections on the prismatic envelope of the Frobenius twist and connections on the $p$-complete PD-envelope. Gluing this local correspondence yields an equivalence between prismatic crystals on $\mathfrak X^{(1)}$ and crystalline crystals on $\mathfrak X$ for l.c.i. $\mathfrak X$ over $A/p$.}
\end{abstract}

\maketitle

\setcounter{tocdepth}{1}
\tableofcontents

\section{Introduction}

We fix a prime $p$ and a crystalline prism $(A,\delta_A)$. Let $\mathfrak X$ be a quasi-compact quasi-separated finite type $A/p$-scheme. The aim of this paper is to describe prismatic crystals on $(\mathfrak X/A)_{\Prism}$ in terms of differential equations on an explicit prismatic envelope attached to an embedding of $\mathfrak X$ into a smooth formal $A$-scheme.
\subsection{Main results}
Assume that $\mathfrak X$ is embedded as a Koszul-regular closed subscheme of a smooth formal $A$-scheme $Y$, and that $Y$ is equipped with a Frobenius lift modulo $p^2$. We construct a sheaf of rings $\Prism_Y(\mathfrak X)$ on $\mathfrak X$ by gluing local prismatic envelopes. The resulting sheaf is independent of all local choices and satisfies the expected localization properties; see Theorem \ref{prismaticglobal}. Combining this construction with the local calculation of the \v{C}ech nerve of a prismatic envelope, which is ultimately reduced to toric coordinates in Theorems \ref{compare R} and \ref{compare D}, gives the main theorem of the paper.

\begin{theo}[= Theorem \ref{prismaticglobal} + \ref{globalD} + \ref{globalcohcompare}]\label{theo: intro main}
    Let $(A,\delta_A)$ be a crystalline prism, and let $\mathfrak X$ be a separated finite type $A/p$-scheme. Suppose that $\mathfrak X$ admits a Koszul-regular closed immersion into a smooth formal $A$-scheme $Y$, and suppose that $Y$ admits a Frobenius lift modulo $p^2$ compatible with the Frobenius on $A$. Then there is a canonically defined sheaf of rings $\Prism_Y(\mathfrak X)$ on $\mathfrak X$ equipped with a continuous $p$-differential
    $$p\dif:\Prism_Y(\mathfrak X)\to \Omega_{\Prism_Y(\mathfrak X)}^1.$$
    Moreover, there is a rank-preserving equivalence of categories
    $$\mathbf{Crys}_{(\mathfrak X/A)_{\Prism}}
    \xrightarrow{\sim}
    p\text{-}\mathbf{MIC}^{\nil}_{\Prism_Y(\mathfrak X)},$$
    where the right-hand side denotes the category of integrable topologically quasi-nilpotent $p$-connections on $\Prism_Y(\mathfrak X)$. If a prismatic crystal $\mathbb H$ corresponds to $(\mathcal M,\nabla)$ under this equivalence, then there is a canonical quasi-isomorphism
    $$R\nu_*\mathbb H\simeq \DR(\mathcal M,\nabla).$$
\end{theo}

The proof of Theorem \ref{theo: intro main} reduces to an affine local calculation. This local step yields the following key corollary, which is of independent interest as a lifted Ogus--Vologodsky correspondence:

\begin{theo}[= Theorem \ref{localOV}]
    Let $R$ be a $p$-complete smooth $A$-algebra with a Frobenius lift, and let $R\to B$ be a surjection whose kernel $\a$ is generated by $p$ and by lifts of a Koszul-regular sequence modulo $p$. Let $R^{(1)}:=A\widehat{\otimes}_{\phi_A,A}R$. Then there is a canonical rank-preserving equivalence of categories:
    $$C^{-1}:
    p\text{-}\mathbf{MIC}^{\nil}_{\Prism_{R^{(1)}}(\a R^{(1)})}
    \longrightarrow
    \mathbf{MIC}^{\nil}_{D_R(\a)}$$
    between the category of integrable topologically quasi-nilpotent $p$-connections on $\Prism_{R^{(1)}}(\a R^{(1)})$ and the category of integrable topologically quasi-nilpotent connections on $D_R(\a)$, where $D_R(\a)$ is the $p$-complete PD-envelope of $\a$ in $R$.
\end{theo}

By checking compatibility with restrictions to affine opens and with changes of coordinates, these local equivalences glue for l.c.i. schemes. This yields the following crystalline comparison theorem.

\begin{theo}[= Theorem \ref{equiv}]\label{intro: Ogus Volo}
    Let $\mathfrak X$ be an l.c.i. scheme over $A/p$, and let $\mathfrak X^{(1)}$ denote its Frobenius twist. Then there is a canonical equivalence of categories
    $$\mathbf{Crys}_{(\mathfrak X^{(1)}/A)_{\Prism}}
    \xrightarrow{\sim}
    \mathbf{Crys}_{(\mathfrak X/A)_{\mathrm{crys}}}.$$
\end{theo}

The proof of the main theorem rests on an elementary description of prismatic envelopes in characteristic $p$. If $R$ is a $p$-torsion-free $\delta$-ring and $I\subset R$ is a $\phi$-stable ideal containing $p$, then the prismatic envelope is the $p$-adic completion of the subring of $R[1/p]$ generated by $R$ and all iterated $\delta$-derivatives of $x/p$, for $x\in I$; see Definition \ref{def:prismatic_envelope}. The key point is that this construction depends on the $\delta$-structure only modulo $p$.

\begin{theo}[= Theorem \ref{key} and Corollary \ref{closure mod p}]\label{intro: indepnd mod p}
    Let $B$ be a $p$-torsion-free ring equipped with two $\delta$-structures $\delta_1$ and $\delta_2$ such that $\delta_1(a)\equiv \delta_2(a)\pmod p$ for all $a\in B$. Then, for every ideal $I\subset B$ containing $p$, the prismatic envelopes defined using $\delta_1$ and $\delta_2$ are canonically isomorphic:
    $$\Prism_{B,\delta_1}(I)\cong \Prism_{B,\delta_2}(I).$$
    More precisely, for every $x\in B$ and every $r\geq 0$, the subrings of $B[1/p]$ generated by
    $$\left\{\delta_1^t\left(\frac{x}{p}\right):0\leq t\leq r\right\}
    \quad\text{and}\quad
    \left\{\delta_2^t\left(\frac{x}{p}\right):0\leq t\leq r\right\}$$
    coincide.
\end{theo}

This independence result is the technical reason why the global construction of $\Prism_Y(\mathfrak X)$ only requires a Frobenius lift modulo $p^2$, rather than a global Frobenius lift on $Y$.
\subsection{Relation to previous work}

Our work builds naturally on recent foundational developments concerning the equivalence between prismatic crystals and $p$-connections, notably the work of Ogus \cite{ogus2023crystalline}, Tian \cite{Tian_2023}, and Wang \cite{wang2024prismatic}. Compared with these previous approaches, our results make the following improvements:

\begin{itemize}
    \item While previous global equivalences \cite{wang2024prismatic} strictly require the underlying scheme $\mathfrak{X}$ to be smooth, our framework applies to any finite-type scheme admitting a Koszul-regular closed immersion into a smooth formal scheme (e.g., syntomic schemes).
    
    \item We provide an elementary construction of prismatic envelopes in characteristic $p$ and prove that they depend on the $\delta$-structure only modulo $p$ (Theorem \ref{intro: indepnd mod p}). This algebraic independence removes the restrictive assumption of a global Frobenius lift and allows us to glue local data unambiguously to construct a global prismatic envelope sheaf $\Delta_Y(\mathfrak{X})$.
    
    \item Building on the global construction, we establish a natural equivalence between prismatic crystals on $\mathfrak{X}^{(1)}$ and crystalline crystals on $\mathfrak{X}$ (Theorem \ref{intro: Ogus Volo}). This generalizes a well-known observation of Bhatt and Scholze over perfect fields \cite[Example 4.7]{Bhatt_2023} to arbitrary syntomic schemes over arbitrary crystalline prisms $(A, (p))$, replacing topological triviality with a geometric correspondence via the Cartier transform.
\end{itemize}

\subsection*{Acknowledgments}

The author would like to thank Yupeng Wang for introducing this problem and for helpful discussions. The author also thanks Tian Qiu and Yupeng Wang for carefully reading a preliminary draft of this paper.

\section{Preliminaries on commutative algebra}

\subsection{Prismatic envelopes}\label{algebra}

We recall the construction of prismatic envelopes here. The construction in \cite{Bhatt_2022} is based on a homological construction and relies on some regularity assumptions. However, in characteristic $p$, we can construct prismatic envelopes in an elementary way without any regularity assumptions.

\begin{defi}\label{def:prismatic_envelope}
    Let $R$ be a $p$-torsion free $\delta$-ring and $I \subset R$ be a $\phi$-invariant ideal (which typically contains $p$). We define $\Prism^0_{R}(I)$ to be the smallest $\delta$-subring of $R\left[\frac{1}{p}\right]$ containing $R$ and $\frac{x}{p}$ for all $x \in I$. Explicitly,
    $$\Prism^0_{R}(I) = R\left[\delta^i\left(\frac{x}{p}\right): x\in I,\ i\geq 0\right].$$
    If $I$ is generated by $\{x_j\}_{j\in\Lambda}$, then $\Prism^0_{R}(I)$ is generated by $R$ and $\delta^i\left(\frac{x_j}{p}\right)$ for all $j \in \Lambda$ and $i \geq 0$.
\end{defi}

\begin{lemma}
    Let $R$ be a crystalline prism. For any ideal $I \subset R$ containing $p$, let $\Prism_{R}(I)$ be the $p$-adic completion of $\Prism^0_{R}(I)$. Then $\Prism_{R}(I)$ naturally admits the structure of a crystalline prism in $(A/R)_{\Prism}$ where $A=R/(p,I)$.
    
    Moreover, for any crystalline prism $(B, \delta_B)$, the $\delta$-morphisms from $\Prism_{R}(I)$ to $B$ map bijectively to the set of $\delta$-homomorphisms $R \to B$ sending $I$ into $pB$. We call $\Prism_{R}(I)$ the \emph{prismatic envelope} of $(R, I)$.
\end{lemma}

\begin{proof}
    By Definition \ref{def:prismatic_envelope}, we have $I \Prism^0_{R}(I) \subset p\Prism^0_{R}(I)$. Thus, there is a natural homomorphism $R/I \to \Prism^0_{R}(I)/p$, which uniquely extends to the $p$-adic completion $\Prism_{R}(I)$. This endows $\Prism_{R}(I)$ with the structure of a crystalline prism over $R$. 
    
    To verify the universal property, suppose we are given a crystalline prism $(B, \delta_B)$ and a $\delta$-homomorphism $f: R \to B$ such that $f(I) \subset pB$. Because $B$ is a $\delta$-ring and $p$-torsion free, the condition $f(x) \in pB$ for $x \in I$ implies that $f$ extends uniquely to a $\delta$-homomorphism on the subring $\Prism^0_{R}(I) \subset R[\frac{1}{p}]$ by sending $\frac{x}{p}$ to $\frac{f(x)}{p} \in B$. Since $B$ is $p$-complete, this map further extends uniquely to the $p$-adic completion $\Prism_{R}(I)$. This establishes the required bijection.
\end{proof}

\begin{rmk}
    The relative version holds canonically: if $R$ is an algebra over a crystalline prism $(A,\delta)$ compatible with $\phi$, then $\Prism_{R}(I)$ is a prism over $A$ satisfying the analogous universal property in $(R/A)_{\Prism}$.
\end{rmk}

\begin{cor}
    For a crystalline prism $(A,\delta)$ and an $A/p$-algebra $R$, the prismatic site $(R/A)_{\Prism}$ has arbitrary finite products.
\end{cor}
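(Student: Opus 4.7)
The plan is to reduce to binary products (by iteration, arbitrary non-empty finite products follow) and construct the binary product as a prismatic envelope of the pushout $B_{1}\otimes_{A}B_{2}$, modified by an ideal that forces the two induced $R$-structures to coincide.

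Concretely, given $(B_{1},R\to B_{1}/p)$ and $(B_{2},R\to B_{2}/p)$, I would form $D:=B_{1}\otimes_{A}B_{2}$ as the pushout in the category of $\delta$-$A$-algebras, with canonical inclusions $\alpha\colon B_{1}\to D$, $\beta\colon B_{2}\to D$ and Frobenius $\phi_{D}:=\phi_{B_{1}}\otimes_{\phi_{A}}\phi_{B_{2}}$. For each $r\in R$, fix lifts $\tilde r_{i}\in B_{i}$ of the image of $r$ in $B_{i}/p$, and set
$$I \;:=\; \bigl(p,\ \{\alpha(\tilde r_{1})-\beta(\tilde r_{2}) : r\in R\}\bigr)\subset D.$$
Since two lifts of the same $r$ differ by an element of $pB_{i}$, the ideal $I$ contains $pD$ and does not depend on the choice of lifts. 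By the preceding (prismatic envelope) lemma, $C:=\Prism_{D}(I)$ is a crystalline prism over $A$ equipped with a $\delta$-map $D\to C$ whose reduction mod $p$ annihilates the image of $I$; in particular the two composites $R\to B_{i}/p\to D/p\to C/p$ agree, furnishing a canonical map $R\to C/p$ that makes $C$ an object of $(R/A)_{\Prism}$ together with the induced maps to $B_{1}$ and $B_{2}$.

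To check the universal property, I would take any $(E,R\to E/p)\in(R/A)_{\Prism}$ equipped with compatible maps $\gamma_{i}\colon B_{i}\to E$, i.e., $\delta$-$A$-maps satisfying $R\to B_{i}/p\to E/p=R\to E/p$. The pushout property of $D$ yields a unique $\delta$-map $\Gamma\colon D\to E$ extending $\gamma_{1},\gamma_{2}$; the $R$-compatibility forces $\Gamma(\alpha(\tilde r_{1})-\beta(\tilde r_{2}))\in pE$ for every $r$, hence $\Gamma(I)\subset pE$. The universal property of the prismatic envelope then produces a unique $\delta$-map $C\to E$ factoring $\Gamma$, automatically compatible with the $R$-structures. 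I expect the main technical obstacle to be precisely this last compatibility, together with the well-definedness of $I$ independent of the lifts; both should reduce to routine chases through the universal properties of the pushout in $\delta$-$A$-algebras and of the prismatic envelope already established.
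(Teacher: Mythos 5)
Your construction agrees with the paper's: the paper also forms $B_1\otimes_A B_2$ with the tensor $\delta$-structure, takes the prismatic envelope along the preimage $J$ of the ideal generated by $\{r\otimes 1-1\otimes r\}$ in the mod-$p$ tensor product (which is exactly your ideal $I=(p,\{\alpha(\tilde r_1)-\beta(\tilde r_2)\})$), and verifies the coproduct universal property via the envelope's universal property. So the proposal is correct and takes essentially the same route, with your write-up only adding the routine details the paper leaves as "not hard to check."
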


\begin{proof}
    Suppose $B$ and $C$ are two prisms in $(R/A)_{\Prism}$. Let $\bar{J} \subset B/p \otimes_{A/p} C/p$ be the ideal generated by $\{r\otimes 1 - 1\otimes r : r \in R\}$, and let $J$ be its preimage in the tensor product $B \otimes_A C$ (endowed with the tensor Frobenius lift). The canonical homomorphism $B \otimes_A C / (p, J) \to \Prism_{B\otimes_A C}(J)/p$ induces a well-defined map $R \to \Prism_{B\otimes_A C}(J)/p$. By the universal property of prismatic envelopes, $\Prism_{B\otimes_A C}(J)$ represents the coproduct of $B$ and $C$ in $(R/A)_{\Prism}$.
\end{proof}

Now we compare this construction with the constructions in \cite{Bhatt_2022}.
For the remainder of this subsection, we fix a $p$-torsion free $\delta$-ring $R$ and a sequence $x_1, \dots, x_r \in R$ whose image in $R/p$ is Koszul-regular. Let $I = (x_1, \dots, x_r)$.

\begin{prop}\label{prop:pd_envelope_koszul}
    Let $\Z[t_1, \dots, t_r] \to R$ be the map sending $t_j \mapsto x_j$, and let $D$ be the PD-envelope of $(t_1, t_2,\dots, t_r)$ in $\Z[t_1, t_2,\dots, t_r]$. Let \[D_R(I)=D\widehat{\otimes}_{\Z[t_1,t_2,\dots,t_r]}R.\] Then:
    \begin{enumerate}
        \item The PD-ring $D_R(I)$ is $p$-torsion free, and $R \otimes_{\Z[t_1, \dots, t_r]} D/p^n \xrightarrow{\sim} D_R(I)/p^n$ for all $n \geq 1$.
        \item Let $D^0 \subset R\left[\frac{1}{p}\right]$ be the subring generated by $R$ and all divided powers $\frac{x_j^m}{m!}$. Then, $D_R(I)$ is canonically isomorphic to the $p$-adic completion of $D^0$.
    \end{enumerate}
\end{prop}

\begin{proof}
    See \cite[Lemma 2.38]{Bhatt_2022}.
\end{proof}

\begin{prop}\label{prop:prism_close}
    Let $\tau: \Z\{u_1, \dots, u_r\} \to R$ be the $\delta$-homomorphism defined by $u_j \mapsto x_j$. Let $\Z\{v_1, \dots, v_r\}$ be a $\delta$-algebra over $\Z\{u_1, \dots, u_r\}$ via $u_j \mapsto p v_j$. Then there is a canonical isomorphism of prisms:
    $$\Prism_{R}(I) \cong R\widehat\otimes_{\tau, \Z\{u_1, \dots, u_r\}} \Z\{v_1, \dots, v_r\}.$$
\end{prop}

\begin{proof}
    By \cite[Example 7.9]{Bhatt_2022}, the right-hand side satisfies the same universal property as the prismatic envelope on the left hand side, hence they are canonically isomorphic.
\end{proof}

\begin{cor}\label{cor: crys comp}
    The natural map $R \widehat{\otimes}_{\phi, R} \Prism_{R}(I) \xrightarrow{id \otimes \phi} \Prism_{R}\big((\phi(x_1), \dots, \phi(x_r))\big)$ is an isomorphism. Furthermore, there is a canonical isomorphism:
    $$R \otimes_{\phi, R} \Prism_{R}(I) \cong D_R(I).$$
\end{cor}

\begin{proof}
    Since $\phi(x_j) \equiv x_j^p \pmod p$, the sequence $\phi(x_1), \dots, \phi(x_r)$ remains Koszul-regular modulo $p$ (cf. \cite[\href{https://stacks.math.columbia.edu/tag/062G}{Tag 062G}]{stacks-project}). The first isomorphism follows directly from Proposition \ref{prop:prism_close} by extending scalars along $\phi: R \to R$ (which corresponds to changing the base map from $\tau_1$ to $\tau_2$). 
    
    For the second isomorphism, see \cite[Corollary 2.39]{Bhatt_2022}.
\end{proof}

\subsection[Independence of delta-structure modulo p]{Independence of $\delta$-structure modulo $p$}

Our key observation is that the prismatic envelope depends on the $\delta$-structure only modulo $p$. More explicitly, fix a $p$-torsion free ring $B$ equipped with two $\delta$-structures $\delta_1$ and $\delta_2$ such that
$$\delta_1(x) - \delta_2(x) \in pB \quad \forall x \in B.$$
Recall that they induce two Frobenius lifts $\phi_i(x) := x^p + p\delta_i(x)$. We extend $\phi_i$ to $B[\frac{1}{p}]$ by localization, which uniquely extends the $\delta$-structures to $B[\frac{1}{p}]$ via $\delta_i(x) = \frac{\phi_i(x)-x^p}{p}$ for $i \in \{1,2\}$. We will prove the following theorem:

\begin{theo}\label{key}
    Let $B$ be a $p$-torsion free ring with two $\delta$-structures satisfying $\delta_1(a) - \delta_2(a) \in pB$ for all $a \in B$. For any $x \in B$ and integer $r \geq 0$, define the subrings of $B[\frac{1}{p}]$:
    $$B_i^r = B\left[\delta_i^t\left(\frac{x}{p}\right) : 0 \leq t \leq r\right]$$ 
    for $i \in \{1,2\}$. Then as subrings of $B\left[\frac{1}{p}\right]$, we have $B_1^r = B_2^r$ for all $r \geq 0$.
\end{theo}

\begin{lemma}\label{diff}
    Let $A$ be a $\delta$-ring (not necessarily $p$-torsion free) with Frobenius lift $\phi(x) = x^p + p\delta(x)$, and let $n \geq 1$ be an integer. Suppose $d: A \to A$ is an additive endomorphism such that for all $x,y \in A$:
    $$d(xy) = \phi(x)d(y) + \phi(y)d(x) + p^n d(x)d(y).$$
    Then:
    \begin{enumerate}
        \item[(1)] For any $l \geq 1$ and $x \in A$, $d(x^l) = \sum_{i=0}^{l-1}\binom{l}{i}p^{n(l-i-1)}\phi(x)^i d(x)^{l-i}$.
        \item[(2)] $d\left(\phi^r(x)\right) \in p^rA$ for any $x \in A$ and $r \geq 0$.
    \end{enumerate}
\end{lemma}

\begin{proof}
    (1) We proceed by induction on $l$. The base case $l=1$ is trivial. Assume the claim holds for $l-1$. By the induction hypothesis, we have:
    \begin{align*}
        d(x^l) &= d(x \cdot x^{l-1}) = \phi(x)d(x^{l-1}) + \phi(x)^{l-1}d(x) + p^n d(x)d(x^{l-1}) \\
        &= \phi(x)\left(\sum_{i=0}^{l-2}\binom{l-1}{i}p^{n(l-2-i)}\phi(x)^i d(x)^{l-1-i}\right) + \phi(x)^{l-1}d(x) \\
        &\quad + p^n d(x)\sum_{j=0}^{l-2}\binom{l-1}{j}p^{n(l-2-j)}\phi(x)^j d(x)^{l-1-j} \\
        &= \sum_{i=0}^{l-1}\binom{l-1}{i-1}p^{n(l-1-i)}\phi(x)^i d(x)^{l-i} + \binom{l-1}{l-1}\phi(x)^{l-1}d(x) \\
        &\quad + \sum_{j=0}^{l-2}\binom{l-1}{j}p^{n(l-1-j)}\phi(x)^j d(x)^{l-j} \\
        &= \sum_{i=0}^{l-1}\left[\binom{l-1}{i-1} + \binom{l-1}{i}\right]p^{n(l-1-i)}\phi(x)^i d(x)^{l-i} \\
        &= \sum_{i=0}^{l-1}\binom{l}{i}p^{n(l-1-i)}\phi(x)^i d(x)^{l-i}.
    \end{align*}
    This completes the inductive step for $l$.

    (2) We proceed by induction on $r$, with $r=0$ being trivial. Assume the claim holds for all $r < r_0$ where $r_0 > 0$. For $r=r_0$ and any $x \in A$:
    \begin{align*}
        d\left(\phi^{r_0}(x)\right) &= d\left(\phi^{r_0-1}\left(\phi(x)\right)\right) = d\left(\phi^{r_0-1}\left(x^p + p\delta(x)\right)\right) \\
        &= d\left(\phi^{r_0-1}(x)^p\right) + p d\left(\phi^{r_0-1}\left(\delta(x)\right)\right).
    \end{align*}
    Since $\delta(x) \in A$, the term $p d\left(\phi^{r_0-1}\left(\delta(x)\right)\right) \in p^{r_0}A$ by the induction hypothesis. It remains to show $d\left(\phi^{r_0-1}(x)^p\right) \in p^{r_0}A$. By part (1), we have:
    \begin{equation*}\label{*}
        d\left(\phi^{r_0-1}(x)^p\right) = \sum_{i=0}^{p-1}\binom{p}{i}p^{n(p-1-i)}\phi\left(\phi^{r_0-1}(x)\right)^i d\left(\phi^{r_0-1}(x)\right)^{p-i}. \tag{*}
    \end{equation*}
    By the induction hypothesis, $d\left(\phi^{r_0-1}(x)\right)^{p-i} \in p^{(r_0-1)(p-i)}A$. When $i=0$, the corresponding term lies in $p^{p(r_0-1)+n(p-1)}A \subset p^{r_0}A$. When $0 < i < p$, we have $p \mid \binom{p}{i}$, and hence the term lies in $p^{(p-i)(r_0-1)+1}A \subset p^{r_0}A$. Thus, each term in the summation \eqref{*} lies in $p^{r_0}A$, completing the proof.
\end{proof}

Returning to the proof of Theorem \ref{key}, recall that $\delta_1 \equiv \delta_2 \pmod p$ on $B$. For any $x \in B$ and integers $1 \leq l \leq r$, define:
$$\lambda(r,l) = \frac{1}{p^l}\left[\phi_1^{l}\left(\delta_1^{r-l}\left(\frac{x}{p}\right)\right) - \phi_2\circ\phi_1^{l-1}\left(\delta_1^{r-l}\left(\frac{x}{p}\right)\right)\right] \in B\left[\frac{1}{p}\right].$$

\begin{lemma}\label{main cal}
    \begin{enumerate}
        \item[(1)] For any $1 \leq l \leq r$:
        $$\lambda(r+1,l) = \lambda(r+1,l+1) - \frac{1}{p^{l+1}}\left[\phi_1^{l}\left(\delta_1^{r-l}\left(\frac{x}{p}\right)\right)^p - \phi_2\circ\phi_1^{l-1}\left(\delta_1^{r-l}\left(\frac{x}{p}\right)\right)^p\right].$$
        \item[(2)] $\lambda(r,l) \in B_1^{r-1}$ for all $1 \leq l \leq r$.
    \end{enumerate}
\end{lemma}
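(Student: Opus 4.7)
For part (1), my plan is a direct calculation. Set $w = \delta_1^{r-l}(x/p)$, so that $\delta_1^{r+1-l}(x/p) = \delta_1(w) = (\phi_1(w) - w^p)/p$. Substituting this into the definition of $\lambda(r+1,l)$ and distributing the outer $\phi_1^l$ and $\phi_2\circ\phi_1^{l-1}$ (using that they are ring homomorphisms, so $\phi_1^l(w^p) = \phi_1^l(w)^p$ and $\phi_2\phi_1^{l-1}(w^p) = (\phi_2\phi_1^{l-1}(w))^p$) splits the numerator into two groups. The $\phi_1(w)$-pieces reassemble to $\phi_1^{l+1}(w) - \phi_2\phi_1^l(w) = p^{l+1}\lambda(r+1,l+1)$, while the $w^p$-pieces produce $\phi_1^l(w)^p - (\phi_2\phi_1^{l-1}(w))^p$. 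Dividing by the extra factor of $p$ coming from $\delta_1(w)$ and then by $p^l$ yields the claimed identity; the entire computation is bookkeeping.

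For part (2), I would proceed by induction on $r$. The base case $r=l=1$ is immediate: $\lambda(1,1) = (\phi_1(x)-\phi_2(x))/p^2 = (\delta_1(x)-\delta_2(x))/p \in B$ by hypothesis. In the inductive step I would first handle the top index $l=r+1$ directly. Here $\lambda(r+1,r+1) = (\phi_1^{r+1}(x) - \phi_2\phi_1^r(x))/p^{r+2}$, and the natural tool is Lemma \ref{diff}. Because $\delta_1 \equiv \delta_2 \pmod p$ on $B$, we have $\phi_1 \equiv \phi_2 \pmod{p^2}$ on $B$, so the map $d\colon B\to B$, $d(y) = (\phi_2(y)-\phi_1(y))/p^2$, is well-defined. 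A direct expansion of $\phi_i(xy) = \phi_i(x)\phi_i(y)$ shows that $d$ satisfies the multiplicative relation of Lemma \ref{diff}(1) with $\phi = \phi_1$ and $n=2$. Lemma \ref{diff}(2) then gives $d(\phi_1^r(x)) \in p^r B$, which is exactly $\lambda(r+1,r+1) \in B$.

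With the top case secured, I would use part (1) as a downward recursion in $l$: for $l\leq r$, $\lambda(r+1,l) = \lambda(r+1,l+1) - \mu_l$ where $\mu_l = (a^p - b^p)/p^{l+1}$ with $a = \phi_1^l(\delta_1^{r-l}(x/p))$ and $b = \phi_2\phi_1^{l-1}(\delta_1^{r-l}(x/p))$. Note that $a - b = p^l\lambda(r,l)$, which lies in $p^l B$ by the outer inductive hypothesis on $r$. Assuming $\mu_l \in B$, downward induction on $l$ (starting from $l=r+1$) delivers $\lambda(r+1,l)\in B$ for all $1 \leq l \leq r+1$.

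The main obstacle, and the most delicate step, is verifying $\mu_l \in B$. Expanding $a^p - b^p = \sum_{k=1}^{p} \binom{p}{k}\,b^{p-k}(a-b)^k$ and substituting $a-b = p^l\lambda(r,l)$ shows that every term of $\mu_l$ carries at least the factor $p^{l+1}$, so $\mu_l$ is \emph{a priori} in $B[1/p]$; the trouble is that the remaining $b^{p-k}$ factors only lie in $B[1/p]$. The key observation I would exploit is that $a$ and $b$ arise as $\phi_1$ and $\phi_2$ applied to the \emph{same} element $u = \phi_1^{l-1}(\delta_1^{r-l}(x/p))$, which lets one rewrite $a^p - b^p = (\phi_1 - \phi_2)(u^p)$ and then feed it back into the recursion of part (1) together with Lemma \ref{diff}; carefully tracking the $p$-denominators, using both the inductive hypothesis $\lambda(r,l)\in B$ and the congruence $\phi_1 \equiv \phi_2 \pmod{p^2}$, forces the $1/p$-contributions to cancel and places $\mu_l$ in $B$.
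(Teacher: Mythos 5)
Your part (1) is correct and is exactly the computation in the paper's proof. Your plan for part (2) is also structurally the paper's argument: induction on $r$, the top case $\lambda(r+1,r+1)$ handled by Lemma \ref{diff} applied to $d=(\phi_2-\phi_1)/p^2$ with $n=2$, and then a downward recursion in $l$ through the identity of part (1). The genuine gap is exactly the step you single out but never carry out: the claim that (in your notation) $\mu_l=\frac{1}{p^{l+1}}\bigl(a^p-b^p\bigr)\in B$. Your proposed rescue does not close it: writing $a^p-b^p=(\phi_1-\phi_2)(u^p)=-p^2d(u^p)$ and expanding $d(u^p)$ by Lemma \ref{diff}(1) produces terms $\binom{p}{i}p^{2(p-1-i)}\phi_1(u)^{i}d(u)^{p-i}$ with $u=\phi_1^{l-1}\bigl(\delta_1^{r-l}(\frac{x}{p})\bigr)$ only in $B[\frac{1}{p}]$, so the denominators you hope to cancel reappear in the factors $\phi_1(u)^i$ and $d(u)^{p-i}$, and the inductive hypothesis $a-b\in p^lB$ gives no control over them.

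In fact no amount of bookkeeping can close this step, because it is false, and so is statement (2) for $l<r$ as written. Take $B=\Z_p[x]$, $\delta_1(x)=0$, $\delta_2(x)=p$; then $\phi_1(x)=x^p$, $\phi_2(x)=x^p+p^2$, and $\delta_1\equiv\delta_2\bmod p$ holds on all of $B$. For $p=2$ one gets $\delta_1(\frac{x}{2})=\frac{x^2}{8}$ and $\lambda(2,1)=\frac{1}{2}\bigl[\phi_1(\frac{x^2}{8})-\phi_2(\frac{x^2}{8})\bigr]=-\frac{x^2}{2}-1\notin B$ (the analogous computation fails for every $p$), while $\lambda(2,2)=-(x^2+2)\in B$, confirming that it is precisely the descent in $l$ that breaks. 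The same defect is hidden in the paper's own proof behind the phrase ``an easy calculation implies'': the implication $a-b\in p^lB\Rightarrow a^p-b^p\in p^{l+1}B$ is valid for $a,b\in B$, not for $a,b\in B[\frac{1}{p}]$, so you have in fact isolated a real problem rather than overlooked a routine step. What the application in Theorem \ref{key} actually uses is only that $\lambda(r,1)$ lies in $B_2^{r}$ rather than in $B$ (in the example, $\lambda(2,1)=3-4\delta_2(\frac{x}{2})\in B_2^1$); so the lemma needs to be weakened to a statement of that kind, and establishing it requires an argument different from the one you sketch.
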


\begin{proof}
    (1) Observe that:
    \begin{align*}
        \lambda(r+1,l) &= \frac{1}{p^{l}}\left[\phi_1^{l}\left(\delta_1\left(\delta_1^{r-l}\left(\frac{x}{p}\right)\right)\right) - \phi_2\circ\phi_1^{l-1}\left(\delta_1\left(\delta_1^{r-l}\left(\frac{x}{p}\right)\right)\right)\right]\\ 
        &= \frac{1}{p^{l}}\left[\phi_1^{l}\left(\frac{\phi_1\left(\delta_1^{r-l}\left(\frac{x}{p}\right)\right) - \delta_1^{r-l}\left(\frac{x}{p}\right)^p}{p}\right) - \phi_2\circ\phi_1^{l-1}\left(\frac{\phi_1\left(\delta_1^{r-l}\left(\frac{x}{p}\right)\right) - \delta_1^{r-l}\left(\frac{x}{p}\right)^p}{p}\right)\right]\\
        &= \frac{1}{p^{l+1}}\bigg[\phi_1^{l+1}\left(\delta_1^{r-l}\left(\frac{x}{p}\right)\right) - \phi_1^{l}\left(\delta_1^{r-l}\left(\frac{x}{p}\right)\right)^p \\
        &\quad - \phi_2\circ\phi_1^{l}\left(\delta_1^{r-l}\left(\frac{x}{p}\right)\right) + \phi_2\circ\phi_1^{l-1}\left(\delta_1^{r-l}\left(\frac{x}{p}\right)\right)^p \bigg]\\
        &= \lambda(r+1,l+1) - \frac{1}{p^{l+1}}\left[\phi_1^{l}\left(\delta_1^{r-l}\left(\frac{x}{p}\right)\right)^p - \phi_2\circ\phi_1^{l-1}\left(\delta_1^{r-l}\left(\frac{x}{p}\right)\right)^p\right].
    \end{align*}

    We now prove part~(2) by induction on $r$. First note that \[ \varphi_1(B_1^m)\subseteq B_1^{m+1} \qquad \text{for every } m\geq 0. \] Indeed, $\varphi_1(B)\subseteq B$, and for every $0\leq t\leq m$, \[ \varphi_1 \left( \delta_1^t\left(\frac{x}{p}\right) \right) = \delta_1^t\left(\frac{x}{p}\right)^p + p\, \delta_1^{t+1}\left(\frac{x}{p}\right) \in B_1^{m+1}. \] Consequently, \[ \varphi_1^l(B_1^m)\subseteq B_1^{m+l} \qquad \text{for all } l,m\geq 0. \tag{*} \] 
    
    For $r=1$, we necessarily have $l=1$. Since $\varphi_i(x)=x^p+p\delta_i(x)$, we obtain \[  \lambda(1,1) = \frac{1}{p} \left[ \varphi_1\left(\frac{x}{p}\right) - \varphi_2\left(\frac{x}{p}\right) \right] = \frac{\delta_1(x)-\delta_2(x)}{p} \in B = B_1^0. \] Thus, the assertion holds when $r=1$. Assume now that $r\geq 2$ and that the assertion has been proved for $r-1$. We first show that \[ \lambda(r,r)\in B. \] Since $\delta_1\equiv\delta_2\pmod p$, the map \[ d := \frac{\varphi_2-\varphi_1}{p^2} : B\longrightarrow B \] is well-defined and additive. Moreover, the multiplicativity of $\varphi_1$ and $\varphi_2$ gives \[ d(ab) = \varphi_1(a)d(b) + \varphi_1(b)d(a) + p^2d(a)d(b). \] Applying Lemma~\ref{diff} with $n=2$, we obtain \[ d\left(\varphi_1^{r-1}(x)\right)\in p^{r-1}B. \] Therefore, \[ \lambda(r,r) = \frac{1}{p^r} \left[ \varphi_1^r\left(\frac{x}{p}\right) - \varphi_2\circ\varphi_1^{r-1} \left(\frac{x}{p}\right) \right] = - \frac{ d\left(\varphi_1^{r-1}(x)\right) }{ p^{r-1} } \in B \subseteq B_1^{r-1}. \tag{**} \] It remains to prove that \[ \lambda(r,l)\in B_1^{r-1} \qquad \text{for } 1\leq l\leq r-1. \] We argue by descending induction on $l$. Fix such an $l$ and assume that \[ \lambda(r,l+1)\in B_1^{r-1}. \] Set \[ z := \delta_1^{r-1-l}\left(\frac{x}{p}\right), \qquad a:=\varphi_1^l(z), \qquad b:=\varphi_2\circ\varphi_1^{l-1}(z). \] By the induction hypothesis on $r$, we have \[ c := \lambda(r-1,l) = \frac{a-b}{p^l} \in B_1^{r-2}. \] Thus, \[ a=b+p^lc. \tag{***} \] Since \[ z\in B_1^{r-1-l}, \] formula~\textup{(*)} implies that \[ a=\varphi_1^l(z)\in B_1^{r-1}. \] Moreover, by~\textup{(***)}, \[ b=a-p^lc\in B_1^{r-1}. \] We now expand: \[ \begin{aligned} \frac{a^p-b^p}{p^{l+1}} &= \frac{(b+p^lc)^p-b^p}{p^{l+1}} \\ &= \sum_{j=1}^{p-1} \frac{1}{p} \binom{p}{j} p^{l(j-1)} b^{p-j}c^j + p^{l(p-1)-1}c^p. \end{aligned} \] For $1\leq j\leq p-1$, we have \[ \frac{1}{p}\binom{p}{j}\in\mathbf{Z}. \] Furthermore, \[ l(p-1)-1\geq 0 \] because $l\geq 1$ and $p\geq 2$. Since $b,c\in B_1^{r-1}$, it follows that \[ \frac{a^p-b^p}{p^{l+1}} \in B_1^{r-1}. \] Part~(1) now yields \[ \lambda(r,l) = \lambda(r,l+1) - \frac{a^p-b^p}{p^{l+1}} \in B_1^{r-1}. \] Starting from~\textup{(**)} and descending from $l=r-1$ to $l=1$, we conclude that \[ \lambda(r,l)\in B_1^{r-1} \qquad \text{for every } 1\leq l\leq r. \] This completes the induction.
\end{proof}

\begin{proof}[Proof of Theorem \ref{key}]
    We proceed by induction on $r$. The base case $r=0$ is trivial. Assume the claim holds for $r-1$. For the inductive step $r$, by symmetry, it suffices to prove $\delta^r_1\left(\frac{x}{p}\right) \in B_2^r$. By the construction of $B_i^r$, we have $\delta_i(B_i^l) \subset B_i^{l+1}$ for $i \in \{1,2\}$ and $l \geq 0$. Thus, by the induction hypothesis, $\delta_1^{r-1}\left(\frac{x}{p}\right) \in B_2^{r-1}$, whence $\delta_2\left(\delta_1^{r-1}\left(\frac{x}{p}\right)\right) \in B_2^r$. 
    
    Note that for any $t$:
    $$\delta_1^r(t) - \delta_2\left(\delta_1^{r-1}(t)\right) = \frac{1}{p}\left[\phi_1\left(\delta_1^{r-1}(t)\right) - \phi_2\left(\delta_1^{r-1}(t)\right)\right].$$
    Hence, taking $t = x/p$, we obtain:
    $$\delta^r_1\left(\frac{x}{p}\right) = \delta_2\circ\delta^{r-1}_1\left(\frac{x}{p}\right) + \lambda(r,1) \in B_2^r$$ 
    by Lemma \ref{main cal}(2).
\end{proof}

\begin{cor}\label{closure mod p}
    Let $B$ be a $p$-torsion free ring equipped with two $\delta$-structures $\delta_1$ and $\delta_2$ such that $\delta_1 \equiv \delta_2 \pmod p$. For any ideal $I \subset B$ containing $p$, the prismatic envelopes $\Prism_{B,\delta_1}(I)$ and $\Prism_{B,\delta_2}(I)$ are canonically isomorphic.
\end{cor}

\begin{proof}
    By Definition \ref{def:prismatic_envelope}, the uncompleted prismatic envelope $\Prism^0_{B,\delta_i}(I)$ is the subring of $B\left[\frac{1}{p}\right]$ generated by $B$ and all elements of the form $\delta_i^t\left(\frac{x}{p}\right)$ for $x \in I$ and $t \geq 0$. 
    
    Applying Theorem \ref{key} and taking the union over all $r \geq 0$, we see that for any fixed $x \in I$, the subring generated by $\{\delta_1^t\left(\frac{x}{p}\right)\}_{t \geq 0}$ coincides exactly with the subring generated by $\{\delta_2^t\left(\frac{x}{p}\right)\}_{t \geq 0}$ inside $B\left[\frac{1}{p}\right]$. 
    
    Since $\Prism^0_{B,\delta_i}(I)$ is the compositum of these subrings for all generators $x \in I$, it follows that
    $$\Prism^0_{B,\delta_1}(I) = \Prism^0_{B,\delta_2}(I)$$
    as identical subrings of $B\left[\frac{1}{p}\right]$. Passing to the $p$-adic completions on both sides yields the canonical isomorphism $\Prism_{B,\delta_1}(I) \cong \Prism_{B,\delta_2}(I)$.
\end{proof}

\subsection{Some calculations on toric charts}\label{toric}

In this subsection, we fix a crystalline prism $(A,\delta)$, a $p$-completely smooth $A$-algebra $R$, and a $p$-completely \'etale homomorphism
$$\sq:A\langle X_1^{\pm 1},X_2^{\pm 1},\dots,X_d^{\pm 1}\rangle\to R.$$ 
For any integer $m \geq 0$ and any $p$-torsion free, $p$-complete ring $S$, we denote $S/p^{m+1}$ by $S_m$. Fix an ideal $\a \subset R$ containing $p$, and let $B = R/\a$. Suppose $R$ is equipped with two $\delta$-structures $\delta_1$ and $\delta_2$, with corresponding Frobenius lifts $\phi_1$ and $\phi_2$. Let
$$P_i = \Prism_{R,\delta_i}\left(\a\right)$$ 
for $i \in \{1,2\}$. Then the natural diagram 
$$B \to P_i/p \leftarrow P_i$$ 
forms an object in $(B/A)_{\Prism}$. Let $J$ be the kernel of the diagonal map $R \widehat{\otimes}_A R \to R$.

\begin{lemma}
    The coproduct of $(B \to P_1/p \leftarrow P_1)$ and $(B \to P_2/p \leftarrow P_2)$ in $(B/A)_{\Prism}$ is represented by the prismatic envelope
    $$P_{1,2} := \Prism_{R\widehat{\otimes}_A R}\left((x\otimes 1: x \in \a) + J\right).$$
\end{lemma}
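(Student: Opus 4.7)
The plan is to verify the universal property of coproducts directly, essentially mirroring the construction in the earlier corollary but at the level of $R$ rather than $D_1, D_2$. First I would endow $R\hat\otimes_A R$ with the tensor $\delta$-structure $\delta_{1,2}$ coming from $\delta_1$ on the left factor and $\delta_2$ on the right; this is well-defined as the pushout in $\delta$-rings over the $\delta$-ring $A$, and the two natural maps $\iota_1,\iota_2\colon R\to R\hat\otimes_A R$ are a $\delta_1$- and a $\delta_2$-homomorphism respectively. Since $p\in\a$, the ideal $(x\otimes 1:x\in\a)+J$ contains $p$, and the quotient of $R\hat\otimes_A R$ by this ideal is $R/\a=B$, so the prismatic envelope $D_{1,2}$ is a crystalline prism equipped with a map from $B$; in particular $D_{1,2}\in (B/A)_{\Prism}$.

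Next I would build the coprojections. The map $\iota_1$ carries $\a$ into $(x\otimes 1:x\in\a)\subset pD_{1,2}$ and is a $\delta_1$-homomorphism, so the universal property of $D_1=\Prism_{R,\delta_1}(\a)$ supplies a unique $\delta$-morphism $D_1\to D_{1,2}$; the map $D_2\to D_{1,2}$ is produced analogously from $\iota_2$. Both maps are compatible with the structure map $B\to D_{1,2}/p$ because $r\otimes 1\equiv 1\otimes r$ modulo $J$. For universality, given any object $(B\to E/p\leftarrow E\leftarrow A)$ of $(B/A)_{\Prism}$ together with morphisms $g_i\colon D_i\to E$, let $f_i\colon R\to E$ denote the composite with $R\to D_i$. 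The $f_i$ are $\delta_i$-homomorphisms over $A$, so the pushout property assembles them into a unique $\delta$-homomorphism $F\colon R\hat\otimes_A R\to E$. I would then check that $F$ sends the ideal $(x\otimes 1:x\in\a)+J$ into $pE$: for $x\in\a$ one has $F(x\otimes 1)=f_1(x)\in g_1(\a D_1)\subset pE$, and for any $r\in R$ the element $F(r\otimes 1-1\otimes r)=f_1(r)-f_2(r)$ vanishes in $E/p$ because both $f_i$ reduce to the common structure map $R\to B\to E/p$ (since $g_i$ is a morphism in $(B/A)_{\Prism}$). Finally, the universal property of the prismatic envelope $\Prism_{R\hat\otimes_A R,\delta_{1,2}}\big((x\otimes 1:x\in\a)+J\big)$ factors $F$ uniquely through $D_{1,2}\to E$, and uniqueness at each step shows this is the required factorization of $g_1,g_2$.

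The step I expect to be the main obstacle is the verification that the whole ideal $(x\otimes 1:x\in\a)+J$ lands in $pE$ under $F$; the part involving $J$ is the delicate point, and it relies on both $g_i$ being morphisms over $B$, hence on the fact that the reductions mod $p$ of $f_1$ and $f_2$ coincide with the structure map $R\to E/p$. A secondary but less serious check is that the tensor $\delta$-structure on $R\hat\otimes_A R$ is compatible with $p$-adic completion, which is standard since $R$ is $p$-torsion free and $p$-adically complete.
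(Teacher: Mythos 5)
Your proposal is correct and follows exactly the route the paper intends: the paper's proof is simply the assertion that the universal properties of prismatic envelopes can be checked directly, and your argument carries out that check (tensor $\delta$-structure, coprojections from the universal property of $D_i=\Prism_{R,\delta_i}(\a)$, and factorization of any pair $g_1,g_2$ through $\Prism_{R\hat{\otimes}_AR}((x\otimes 1:x\in\a)+J)$), in the same spirit as the paper's earlier corollary on finite products in $(B/A)_{\Prism}$. The only point worth noting is that the mod-$p$ compatibility need only be verified on the dense image of $R\otimes_AR$ (where $J$ is generated by the differences $r\otimes1-1\otimes r$), since $(R\hat{\otimes}_AR)/p\cong(R\otimes_AR)/p$, which disposes of your worry about topological generators of $J$.
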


\begin{proof}
    This follows directly from the universal property of prismatic envelopes.
\end{proof}

\begin{defi}
    Let $R_{1,2}$ denote the prismatic envelope $\Prism_{R\widehat{\otimes}_A R}\left((p,J)\right)$.
\end{defi}

\begin{cor}\label{RtoD}
    There is a canonical isomorphism $P_{1,2} \cong \Prism_{R_{1,2}}\left((x\otimes 1: x \in \a)\right)$.
\end{cor}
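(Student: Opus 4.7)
The plan is to verify that both sides represent the same set-valued functor on the category of crystalline prisms over $A$, and to conclude by Yoneda. The key identity is that, since $p\in\a$, we have $p\otimes 1\in(x\otimes 1:x\in\a)$, so
$$(p,J)+(x\otimes 1:x\in\a)=(x\otimes 1:x\in\a)+J,$$
which is exactly the defining ideal of $D_{1,2}$. This allows me to factor the envelope $D_{1,2}$ as two successive envelopes, first the one that cuts out $(p,J)$, then the one that cuts out the image of $(x\otimes 1:x\in\a)$.

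First I would spell out the universal property of the left-hand side. By the relative version of the construction of prismatic envelopes (the remark following the initial construction), a morphism of prisms over $A$ from $\Prism_{R_{1,2}}((x\otimes 1:x\in\a))$ to a crystalline prism $(B,\delta_B)$ corresponds to a $\phi$-compatible $A$-algebra homomorphism $R_{1,2}\to B$ sending the ideal $(x\otimes 1:x\in\a)$ into $pB$. Applying the universal property of $R_{1,2}=\Prism_{R\hat{\otimes}_A R}((p,J))$ a second time, such data is the same as a $\phi$-compatible $A$-map $\psi:R\hat{\otimes}_A R\to B$ with $\psi(J)\subset pB$ together with the extra condition $\psi(x\otimes 1)\in pB$ for all $x\in\a$; i.e.\ a $\phi$-map $\psi$ sending $(x\otimes 1:x\in\a)+J$ into $pB$.

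On the other hand, the universal property of $D_{1,2}$ says precisely that maps $D_{1,2}\to B$ correspond to $\phi$-compatible $A$-maps $\psi:R\hat{\otimes}_A R\to B$ sending $(x\otimes 1:x\in\a)+J$ into $pB$. The two descriptions coincide, and Yoneda's lemma produces the canonical isomorphism.

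There is no real obstacle here: the statement is a transitivity property for prismatic envelopes, and the proof is a formal iterated application of the universal property. The only technical point worth checking is that the intermediate object $R_{1,2}$ is a genuine $p$-torsion free $\delta$-ring, so that the second prismatic envelope $\Prism_{R_{1,2}}(\cdot)$ is defined; this is immediate because $R_{1,2}$ is itself a crystalline prism by construction. One should also implicitly verify that the $\phi$-structure on $R_{1,2}$ used in the outer envelope agrees with the one induced from $R\hat{\otimes}_A R$, which is built into the universal property.
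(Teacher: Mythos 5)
Your argument is correct and matches the paper's intended proof: the paper deduces this corollary directly from the universal properties of prismatic envelopes (transitivity of taking envelopes), which is exactly your two-step Yoneda comparison, using $p\in\a$ so that $(p,J)+(x\otimes 1:x\in\a)=(x\otimes 1:x\in\a)+J$.
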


From now on, we assume $\delta_1 \equiv \delta_2 \pmod p$. We will use Corollary \ref{RtoD} to explicitly compute the ring $P_{1,2}$. The following structural result was first established by Wang in \cite{wang2024prismatic}. Here, we provide an alternative proof using Corollary \ref{closure mod p}.

\begin{theo}\label{compare R}
    There is a canonical isomorphism 
    $$\iota_\square: R_{1,2} \cong R\left[Y_1, Y_2, \dots, Y_d\right]_{\mathrm{pd}}^\wedge,$$ 
    where the right-hand side is the $p$-adic completion of the PD-polynomial ring. Under this isomorphism, the homomorphism $R \xrightarrow{\text{id}\otimes 1} R_{1,2}$ is identified with the standard scalar embedding, and $R \xrightarrow{1\otimes \text{id}} R_{1,2}$ is identified with the unique embedding $R \to R\left[Y_1, \dots, Y_d\right]_{\mathrm{pd}}^\wedge$ sending $X_i$ to $X_i + pY_i$.
\end{theo}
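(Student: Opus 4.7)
The plan is to use Corollary \ref{closure mod p} to replace the given tensor $\delta$-structure on $R\hat{\otimes}_A R$ by a convenient toric one, and then to identify $R_{1,2}$ with $S := R[Y_1,\ldots,Y_d]_{\mathrm{pd}}^\land$ via an explicit pair of mutually inverse maps.

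\textbf{Reduction.} Because $R$ is $p$-completely \'etale over $A\langle X_1^{\pm},\ldots,X_d^{\pm}\rangle$, the Frobenius $X_i\mapsto X_i^p$ lifts uniquely to a $\delta$-structure $\delta_\sq$ on $R$. The tensor $\delta$-structures $\delta_1\hat{\otimes}\delta_2$ and $\delta_\sq\hat{\otimes}\delta_\sq$ on $R\hat{\otimes}_A R$ agree modulo $p$, so Corollary \ref{closure mod p} lets us compute $R_{1,2}$ with respect to $\phi=\phi_\sq\hat{\otimes}\phi_\sq$. Writing $Y_i:=1\otimes X_i$, the ring $R\hat{\otimes}_A R$ is $p$-completely \'etale over $A\langle X_i^{\pm},Y_i^{\pm}\rangle$, so the diagonal ideal $J$ is $p$-completely generated by the $p$-regular sequence $z_i:=Y_i-X_i$; a direct expansion gives $\phi(z_i)=\sum_{k=1}^p \binom{p}{k}z_i^k X_i^{p-k}$.

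\textbf{Forward map.} I endow $S$ with a crystalline prism structure by extending $\phi_\sq$ and imposing $\phi(X_i+pY_i)=(X_i+pY_i)^p$; this forces $\phi(Y_i)=\sum_{k=1}^p\binom{p}{k}p^{k-1}X_i^{p-k}Y_i^k$, whose potentially troublesome $p^{p-1}Y_i^p$ term is absorbed by the PD-identity $Y_i^p=p!\,Y_i^{[p]}\in S$. Since $\phi(Y_i)$ lies in the PD-ideal $(p,Y_1,\ldots,Y_d)$, $\phi$ extends uniquely to a PD-preserving endomorphism of $S$, making $S$ a crystalline prism over $A$. The homomorphism $R\hat{\otimes}_A R\to S$ given by $X_i\otimes 1\mapsto X_i$ and $1\otimes X_i\mapsto X_i+pY_i$ is then a $\delta$-map carrying $(p,J)$ into $(p)\cdot S$, so by the universal property of the prismatic envelope it extends to the desired $\iota_\square:R_{1,2}\to S$.

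\textbf{Inverse and main obstacle.} The inverse is defined by $X_i\mapsto X_i\otimes 1$ and $Y_i^{[n]}\mapsto \gamma_n(z_i/p)\in R_{1,2}$, and the crux is showing every such divided power genuinely lives in $R_{1,2}$. The base case comes from rearranging $\delta(z_i/p)=(\phi(z_i/p)-(z_i/p)^p)/p$:
\[
\delta(z_i/p)-\sum_{k=1}^{p-1}\frac{\binom{p}{k}}{p}\,p^{k-1}(z_i/p)^k X_i^{p-k} \;=\; \frac{p^{p-1}-1}{p}(z_i/p)^p,
\]
whose left side lies in $R_{1,2}$ while $p^{p-1}-1\in\Z_p^\times$, forcing $(z_i/p)^p\in p\,R_{1,2}$ and thus $\gamma_p(z_i/p)\in R_{1,2}$. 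The main obstacle is propagating this to divided powers of arbitrary order: one must iteratively apply $\delta$ to the elements $\gamma_{p^l}(z_i/p)$ and carefully track the resulting $p$-adic valuations. A cleaner route that bypasses divided-power bookkeeping is to apply Proposition \ref{prismclose} to write $R_{1,2}/p^n$ as the universal tensor $(R\hat{\otimes}_A R)/p^n\otimes_{\Z\{u_i\}}\Z\{v_i\}$ and match it with $S/p^n$ via the relations $pv_i=z_i$ and $v_i^p=p!\,v_i^{[p]}$ forced by the tensor structure. The ``moreover'' statement about the two embeddings of $R$ into $R_{1,2}$ is built into the definition of $\iota_\square$.
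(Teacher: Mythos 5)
Your first step is where the argument breaks. Corollary \ref{closure mod p} (via Theorem \ref{key}) only applies to two $\delta$-structures that agree \emph{modulo $p$}, i.e.\ whose Frobenius lifts agree modulo $p^2$; this is a genuine hypothesis, not automatic. The standing assumption is only $\delta_1\equiv\delta_2\bmod p$, and there is no reason the toric structure $\delta_\sq$ (with $\phi_\sq(X_i)=X_i^p$) agrees with $\delta_1$ or $\delta_2$ modulo $p$: already on $A\langle X^{\pm1}\rangle$ with $\phi_1(X)=X^p+pX$ one has $\delta_1(X)=X\not\equiv 0=\delta_\sq(X)\bmod p$. So the corollary lets you replace $\delta_1\hat{\otimes}\delta_2$ by $\delta_1\hat{\otimes}\delta_1$ (this is what the paper does), but not by $\delta_\sq\hat{\otimes}\delta_\sq$. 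All of your explicit formulas — $\phi(z_i)=\sum_k\binom{p}{k}z_i^kX_i^{p-k}$, the prism structure on $S$ forced by $\phi(X_i+pY_i)=(X_i+pY_i)^p$, and the $\delta(z_i/p)$ identity in the base case — presuppose the toric Frobenius and do not survive this correction as written; the paper instead works with an arbitrary common $\delta$, using only that $J$ is $\phi$-stable once $\delta_1=\delta_2$.

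Even granting the reduction, the isomorphism is not actually established. The forward map via the universal property is fine in outline, but for the inverse you need every divided power $\gamma_n(z_i/p)$ to lie in $R_{1,2}$ and, just as importantly, that these generate all of $R_{1,2}$; this is exactly the content of the lemma preceding Theorem \ref{compare R} (the analog of Ogus's Lemma 2.28(1), identifying $\Prism^0$ of $(p,J)$ with the subring generated by $\frac{x^{p^m}}{p^{1+p+\dots+p^m}}$, $x\in J$, which are unit multiples of $\gamma_{p^m}(x/p)$). You verify only $n=p$ and explicitly defer the rest, and the proposed shortcut via Proposition \ref{prismclose} is asserted rather than proved: matching $(R\hat{\otimes}_AR)/p^n\otimes_{\Z\{\underline{u}\}}\Z\{\underline{v}\}$ with $S/p^n$ is precisely the nontrivial comparison of a prismatic envelope with a divided-power ring, and such comparisons in Subsection 2.4 come with a Frobenius twist (e.g.\ $R\otimes_{\phi,R}\Prism^0_R(I)/p^n\cong D^0/p^n$), so the untwisted identification you need must be argued, not read off. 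The paper closes exactly these gaps: after reducing to $\delta_1=\delta_2$, it invokes the generation lemma to describe $R_{1,2}$, then checks the map $Y_i\mapsto\frac{1\otimes X_i-X_i\otimes1}{p}$ is an isomorphism modulo $p$ using the étaleness of $\sq$ and functoriality of PD-envelopes.
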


To prove this theorem, we need a lemma analogous to \cite[Lemma 2.28(1)]{ogus2023crystalline}. The inductive structure of our proof is inspired by Wang's arguments in \cite{wang2024prismatic}.

\begin{lemma}\label{lem:ogus_analog}
    Suppose $(A,\delta)$ is a $p$-torsion free $\delta$-ring and $I \subset A$ is an ideal satisfying:
    \begin{enumerate}
        \item[(1)] $\phi(I) \subset I$;
        \item[(2)] $A/I$ is $p$-torsion free.
    \end{enumerate}
    Then $\Prism_A^0\left((p,I)\right)$ coincides with the $A$-subalgebra of $A\left[\frac{1}{p}\right]$ generated by 
    $$\left\{\frac{x^{p^m}}{p^{1+p+\dots +p^m}} : x \in I,\ m \geq 0\right\}.$$
\end{lemma}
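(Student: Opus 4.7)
Let $S$ denote the right-hand $A$-subalgebra of $A[\tfrac{1}{p}]$, with generators $T_m(x):=x^{p^m}/p^{N_m}$ for $x\in I$, $m\geq 0$ and $N_m:=1+p+\cdots+p^m$. The proof is driven by two observations. First, the numerical identity $pN_m=N_{m+1}-1$ gives $T_m(x)^p=p\,T_{m+1}(x)$, whence (using that $\phi$ extends to $A[\tfrac{1}{p}]$ and $\phi(T_m(x))=T_m(\phi(x))$)
\[
\delta(T_m(x))\;=\;\frac{\phi(T_m(x))-T_m(x)^p}{p}\;=\;\frac{T_m(\phi(x))}{p}-T_{m+1}(x),
\]
which I will refer to as $(\ast)$. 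Second, assumptions (1) and (2) force $\delta(I)\subset I$: from $p\delta(x)=\phi(x)-x^p\in I$ and the $p$-torsion freeness of $A/I$ we deduce $\delta(x)\in I$, so $\phi(x)=x^p+p\delta(x)$ decomposes with both summands in $I$.

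I would prove $S=\Prism_A^0((p,I))$ by two inclusions, each reducing via $(\ast)$ to the same computation. For $\Prism_A^0((p,I))\subset S$ it suffices to check that $S$ is a $\delta$-subring of $A[\tfrac{1}{p}]$ containing $y/p$ for every $y\in(p,I)$; the latter is immediate (take $T_0(z)$ if $y=z\in I$ and $1$ if $y=p$), while by the Leibniz and additivity rules for $\delta$, $\delta$-closure reduces to $\delta(T_m(x))\in S$, hence by $(\ast)$ to $T_m(\phi(x))/p\in S$. For $S\subset\Prism_A^0((p,I))$ I would induct on $m$: the base $T_0(x)=x/p$ is tautological, and $(\ast)$ rearranges as $T_{m+1}(x)=T_m(\phi(x))/p-\delta(T_m(x))$, whose second summand lies in $\Prism_A^0((p,I))$ by $\delta$-closure, so the inductive step again reduces to $T_m(\phi(x))/p\in\Prism_A^0((p,I))$.

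Everything thus hinges on the single assertion $T_m(\phi(x))/p\in S$ (the analogous statement for $\Prism_A^0((p,I))$ follows from the same decomposition, since the ingredients $T_0(x),T_0(\delta(x))$ belong to $\Prism_A^0((p,I))$ as well). Expanding by the binomial theorem gives
\[
\frac{T_m(\phi(x))}{p}\;=\;\sum_{k=0}^{p^m}\binom{p^m}{k}\,p^{k-N_m-1}\,x^{p^{m+1}-pk}\,\delta(x)^k,
\]
and substituting $x=p\,T_0(x)$ and $\delta(x)=p\,T_0(\delta(x))$ (the latter legitimate thanks to $\delta(I)\subset I$) rewrites each $k\geq 1$ summand as an integer multiple of $p^{E_k}T_0(x)^{p^{m+1}-pk}T_0(\delta(x))^k$, with
\[
E_k\;=\;p^{m+1}-N_m-1+m-v_p(k)-(p-2)k
\]
by Kummer's formula $v_p\!\binom{p^m}{k}=m-v_p(k)$; the $k=0$ summand is $p^{E_0}T_0(x)^{p^{m+1}}$ with $E_0=p^{m+1}-N_m-1$. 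The main obstacle is the elementary numerical claim $E_k\geq 0$ for all $0\leq k\leq p^m$ and $p\geq 2$; inspecting the dependence on $k$ one finds that the minimum is attained at $k=p^m$ (with $v_p(k)=m$), giving $E_{p^m}=2p^m-N_m-1=(p-2)(p^m-1)/(p-1)\geq 0$, while the bound for $E_0$ is analogous and easier. Since $T_0(x)$ and $T_0(\delta(x))$ are generators of $S$, every summand therefore lies in $S$, yielding $T_m(\phi(x))/p\in S$ and completing the proof.
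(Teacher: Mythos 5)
Your proof is correct and follows essentially the same route as the paper: both arguments rest on the identity $\delta\!\left(\frac{x^{p^m}}{p^{N_m}}\right)=\frac{\phi(x)^{p^m}}{p^{N_m+1}}-\frac{x^{p^{m+1}}}{p^{N_{m+1}}}$, on $\delta(I)\subset I$ (forced by hypotheses (1) and (2)), and on the integrality of $\phi(x)^{p^m}/p^{N_m+1}$ over the subring generated by the elements $x/p$, $x\in I$. The differences are only organizational: the paper runs one induction proving $A_1^m=A_2^m$ together with a sign-tracking refinement and gets the integrality in one line from $\phi(x)\in p^2A_1^0$ (which is exactly the bound $2p^m\geq N_m+1$ underlying your $E_{p^m}\geq 0$), whereas you separate the two inclusions and establish the same estimate via a binomial expansion and Kummer's formula.
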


\begin{proof}
    Let $A_1^m$ and $A_2^m$ be the $A$-subalgebras of $A\left[\frac{1}{p}\right]$ defined by:
    $$A_1^m := A\left[\frac{x^{p^i}}{p^{1+p+\dots +p^i}} : x \in I,\ 0 \leq i \leq m\right], \quad A_2^m := A\left[\delta^i\left(\frac{x}{p}\right) : 0 \leq i \leq m\right].$$
    We will prove by induction on $m$ that $A_1^m = A_2^m$. Furthermore, we claim that for each $m$, there exists a sign $\ep \in \{\pm 1\}$ such that 
    $$\frac{x^{p^m}}{p^{1+p+\dots +p^m}} - \ep\delta^m\left(\frac{x}{p}\right) \in A_1^{m-1}.$$
    The base case $m=0$ is trivial. Assume the claim holds for all integers strictly less than $m$. It suffices to show:
    $$\frac{x^{p^m}}{p^{1+p+\dots +p^m}} - \ep\delta^m\left(\frac{x}{p}\right) \in A_1^{m-1} (= A_2^{m-1})$$ 
    for some $\ep \in \{\pm 1\}$. By the induction hypothesis, there exist $z \in A_1^{m-2} = A_2^{m-2}$ and $\ep' \in \{\pm 1\}$ such that 
    $$\delta^{m-1}\left(\frac{x}{p}\right) = \ep'\frac{x^{p^{m-1}}}{p^{1+p+\dots +p^{m-1}}} + z.$$
    Applying $\delta$, we obtain:
    \begin{align*}
        \delta^m\left(\frac{x}{p}\right) &= \delta\left(\ep'\frac{x^{p^{m-1}}}{p^{1+p+\dots +p^{m-1}}} + z\right) \\
        &= \delta\left(\ep'\frac{x^{p^{m-1}}}{p^{1+p+\dots +p^{m-1}}}\right) + \delta(z) - \sum_{j=1}^{p-1}\frac{1}{p}\binom{p}{j}\left(\ep'\frac{x^{p^{m-1}}}{p^{1+p+\dots +p^{m-1}}}\right)^j z^{p-j}.
    \end{align*}
    Since $z \in A_2^{m-2}$, we have $\delta(z) \in A_2^{m-1} = A_1^{m-1}$. Moreover, powers of $\frac{x^{p^{m-1}}}{p^{1+p+\dots +p^{m-1}}}$ belong to $A_1^{m-1}$. Thus, we only need to show:
    $$\delta\left(\ep'\frac{x^{p^{m-1}}}{p^{1+p+\dots +p^{m-1}}}\right) - \ep\frac{x^{p^m}}{p^{1+p+\dots +p^m}} \in A_1^{m-1} = A_2^{m-1}$$ 
    for some $\ep \in \{\pm 1\}$. The assumption $\phi(I) \subset I$ implies $\delta(I) \subset I$, allowing us to expand:
    \begin{align*}
        \delta\left(\ep'\frac{x^{p^{m-1}}}{p^{1+p+\dots +p^{m-1}}}\right) &= \frac{1}{p}\left[ \ep'\phi\left(\frac{x^{p^{m-1}}}{p^{1+p+\dots +p^{m-1}}}\right) - (\ep')^p\left(\frac{x^{p^{m-1}}}{p^{1+p+\dots +p^{m-1}}}\right)^p \right] \\
        &= \ep'\frac{\phi(x)^{p^{m-1}}}{p^{1+1+p+\dots +p^{m-1}}} + (-(\ep')^p)\frac{x^{p^m}}{p^{1+p+\dots +p^m}}.
    \end{align*}
    Recall that $\phi(x) = x^p + p\delta(x) = p^2\left(p^{p-2}\left(\frac{x}{p}\right)^p + \frac{\delta(x)}{p}\right) \in p^2 A_1^0 = p^2 A_2^0$. Consequently:
    $$\frac{\phi(x)^{p^{m-1}}}{p^{1+1+p+\dots +p^{m-1}}} \in p^{2p^{m-1} - (1+1+p+\dots +p^{m-1})}A_1^0 \subset A_1^0.$$ 
    Setting $\ep = -(\ep')^p$ completes the inductive step.
\end{proof}

\begin{proof}[Proof of Theorem \ref{compare R}]
    By Corollary \ref{closure mod p}, we may assume without loss of generality that $\delta_1 = \delta_2$. Then $J$ is a $\phi$-invariant ideal in the $p$-torsion free quotient $R \widehat{\otimes}_A R$. By Lemma \ref{lem:ogus_analog} and the definition of the prismatic envelope, we have:
    $$R_{1,2} = \left( R\widehat{\otimes}_A R\left[\frac{x^{p^m}}{p^{1+p+\dots +p^m}} : x \in J\right] \right)^\wedge.$$
    
    Consider the continuous homomorphism $R\left[Y_1, \dots, Y_d\right]_{\mathrm{pd}}^\wedge \to R_{1,2}$ defined by $a \mapsto a \otimes 1$ for $a \in R$ and $Y_i \mapsto \frac{1\otimes X_i - X_i\otimes 1}{p}$. To show this is an isomorphism, it suffices by completeness to verify it modulo $p$, i.e., that the map
    \begin{equation}\label{con:*}
        R\left[Y_1, \dots, Y_d\right]_{\mathrm{pd}} \to R\widehat{\otimes}_A R\left[\frac{x^{p^m}}{p^{1+p+\dots +p^m}} : x \in J\right]
    \end{equation}
    induces an isomorphism modulo $p$. 
    
    The proof of this reduction essentially follows Wang \cite{wang2024prismatic}. The key observation is the isomorphism modulo $p$:
    \begin{equation}\label{con:**}
        R\left[Y_1, \dots, Y_d\right]/p \cong R\widehat{\otimes}_A R\left[\frac{x}{p} : x \in J\right]/p,
    \end{equation}
    where $Y_i \mapsto \frac{1\otimes X_i - X_i\otimes 1}{p}$. This follows straightforwardly from the \'etaleness of $\square$. Under the isomorphism \eqref{con:**}, the ideal $(Y_1, \dots, Y_d)$ on the left corresponds precisely to the ideal generated by $\{\frac{x}{p} : x \in J\}$ on the right. Taking the PD-envelopes of these corresponding ideals yields the desired isomorphism \eqref{con:*}.
\end{proof}

\begin{theo}\label{compare D}
    Assume $\delta_1 \equiv \delta_2 \pmod p$. There is a canonical isomorphism
    $$P_{1,2} \cong P_1\left[Y_1, \dots, Y_d\right]_{\mathrm{pd}}^\wedge.$$
\end{theo}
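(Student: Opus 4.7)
The plan is to chain together Corollary \ref{RtoD}, Theorem \ref{compare R}, and a direct computation of the resulting prismatic envelope. First, Corollary \ref{RtoD} lets us rewrite
$$D_{1,2}\cong \Prism_{R_{1,2}}\left((x\otimes 1:x\in\a)\right).$$
Using the hypothesis $\delta_1\equiv \delta_2\mod p$ together with Corollary \ref{closure mod p}, we may assume $\delta_1=\delta_2$ when forming $R_{1,2}$. Then Theorem \ref{compare R} canonically identifies $R_{1,2}$ with $R[Y_1,\ldots,Y_d]_{\mathrm{pd}}^\land$ so that the left embedding $r\mapsto r\otimes 1$ corresponds to the scalar embedding. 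Under this identification the ideal $(x\otimes 1:x\in\a)$ becomes the extension $\a\cdot R[Y_1,\ldots,Y_d]_{\mathrm{pd}}^\land$, so the task reduces to computing $\Prism_{R[Y_1,\ldots,Y_d]_{\mathrm{pd}}^\land}(\a)$.

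Second, I would compute this prismatic envelope directly. The $\delta$-structure on $R[Y_1,\ldots,Y_d]_{\mathrm{pd}}^\land\cong R_{1,2}$ is inherited from the tensor product $\delta$-structure on $R\hat{\otimes}_A R$, and the left embedding $R\to R\hat{\otimes}_A R$ is a $\delta$-morphism by construction. Consequently, for $x\in\a$ and any $i\geq 0$, the element $\delta^i(x/p)$ computed inside $R[Y_1,\ldots,Y_d]_{\mathrm{pd}}^\land[1/p]$ equals $\delta_1^i(x/p)$ computed in $R[1/p]$, and therefore lies in $D_1^0\subset D_1$. Hence the uncompleted prismatic envelope satisfies
$$\Prism^0_{R[Y_1,\ldots,Y_d]_{\mathrm{pd}}^\land}(\a)\cong D_1^0\otimes_R R[Y_1,\ldots,Y_d]_{\mathrm{pd}},$$
where injectivity of the natural surjection uses that $R[Y_1,\ldots,Y_d]_{\mathrm{pd}}$ is free as an $R$-module. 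Passing to $p$-adic completion on both sides then yields $D_1[Y_1,\ldots,Y_d]_{\mathrm{pd}}^\land$, as desired, and canonicity is inherited from the canonicity of each step.

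The main obstacle is the $\delta$-theoretic bookkeeping in the second paragraph. One must verify carefully that the $\delta$-structure on $R[Y_1,\ldots,Y_d]_{\mathrm{pd}}^\land$ restricts to $\delta_1$ on the scalar copy of $R$, so that the generators $\delta^i(x/p)$ used in building the prismatic envelope remain inside $D_1$ and do not produce new combinations involving the $Y_i$; one must also verify that the uncompleted ring factors cleanly as a tensor product over $R$, so that $p$-adic completion returns precisely the promised PD polynomial ring over $D_1$. Once these two bookkeeping points are secured, the proof collapses to a formal chain of canonical isomorphisms.
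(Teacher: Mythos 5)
Your proposal is correct and follows essentially the same route as the paper: reduce to $\delta_1=\delta_2$ via Corollary \ref{closure mod p}, then combine Corollary \ref{RtoD} with Theorem \ref{compare R} and identify $\Prism_{R[Y_1,\dots,Y_d]^\land_{\mathrm{pd}}}(\a)$ with $D_1[Y_1,\dots,Y_d]^\land_{\mathrm{pd}}$, a last step the paper asserts as immediate and you justify by the generator computation. Only a small bookkeeping correction: the envelope is generated over the completed ring, so the tensor description should read $D_1^0\otimes_R R[Y_1,\dots,Y_d]^\land_{\mathrm{pd}}$ rather than the uncompleted PD polynomial ring (the two agree modulo every power of $p$ by Lemma \ref{propertycompletion}, so the $p$-adic completion, and hence your conclusion, is unaffected).
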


\begin{proof}
    By Corollary \ref{closure mod p}, we may again assume $\delta_1 = \delta_2$. Then, combining Corollary \ref{RtoD} and Theorem \ref{compare R}, we obtain:
    $$P_{1,2} \cong \Prism_{R_{1,2}}\left(\a R_{1,2}\right) \cong \Prism_{R\left[Y_1, \dots, Y_d\right]_{\mathrm{pd}}^\wedge}\left(\a R\left[Y_1, \dots, Y_d\right]_{\mathrm{pd}}^\wedge\right) \cong P_1\left[Y_1, \dots, Y_d\right]_{\mathrm{pd}}^\wedge.$$
    Here, the final isomorphism follows from the flat base change of prismatic envelopes.
\end{proof}

\section{Local construction of the functor \texorpdfstring{$\D_{\Prism}$}{D-Prism}}\label{Dprism}

We maintain the notation from Subsection \ref{toric}, but now fix a single $\delta$-structure $\delta$ on $R$. Let $P = \Prism_{R}\left(\a\right)$ and $\widehat{\Omega}_P := P \otimes_R \widehat{\Omega}_{R/A}$.

In this section, we compare prismatic crystals on the site $(B/A)_{\Prism}$ with a specific class of $p$-connections over $P$. While this comparison was originally established by Ogus in \cite{ogus2023crystalline}, we provide a more elementary proof here. Note that the case where $\a = 0$ was handled by Wang \cite{wang2024prismatic}, and a characteristic $p$ analogue for $\a = 0$ was proven by Yichao Tian \cite{Tian_2023}. The following foundational lemma was first discovered by Ogus \cite[Proposition 3.6]{ogus2023crystalline}; we prove a slightly generalized version.

\begin{lemma}\label{lem: extend dif}
    The differential operator $p\dif: R \to \widehat{\Omega}_{R/A}$ (the $p$-th multiple of the standard exterior derivative) extends uniquely to a $p$-adically continuous derivation $p\dif: P \to \widehat{\Omega}_P$.
\end{lemma}

\begin{proof}
   Uniqueness follows immediately from the linearity of $p\dif$. Let $P^0 = \Delta_R^0(\mathfrak{a})$. We first extend $p\dif$ to $R\left[\frac{1}{p}\right]$ and show that $p\dif(P^0) \subset P^0 \otimes_R \widehat{\Omega}_{R/A}$.

We proceed by lexicographic induction on the pair $(n, j)$, where $n = i+j$. We claim that for any $x \in \mathfrak{a}$,
\[
p^{-i+1}\dif\left(\phi^i\left(\delta^j\left(\frac{x}{p}\right)\right)\right) \in P^0 \otimes_R \widehat{\Omega}_{R/A}.
\]

For $j=0$, the claim holds by a straightforward secondary induction on $i$.

Assume the claim holds for all pairs $(i', j')$ such that either $i' + j' \le \lambda$, or $i' + j' = \lambda + 1$ with $j' < j$. Now consider a pair $(i, j)$ where $i+j = \lambda+1$ and $j>0$. Using the identity $\delta(t) = \frac{\phi(t) - t^p}{p}$ and the fact that $\phi$ is a ring homomorphism, we compute:
\begin{align*}
p^{-i+1}\dif\left(\phi^i\left(\delta^j\left(\frac{x}{p}\right)\right)\right) &= p^{-i+1}\dif\left(\phi^i\left(\frac{\phi\left(\delta^{j-1}\left(\frac{x}{p}\right)\right) - \delta^{j-1}\left(\frac{x}{p}\right)^p}{p}\right)\right) \\
&= p^{-i}\dif\left(\phi^{i+1}\left(\delta^{j-1}\left(\frac{x}{p}\right)\right)\right) - p^{-i+1}\phi^i\left(\delta^{j-1}\left(\frac{x}{p}\right)\right)^{p-1}\dif\left(\phi^i\left(\delta^{j-1}\left(\frac{x}{p}\right)\right)\right).
\end{align*}

The first term belongs to $P^0 \otimes_R \widehat{\Omega}_{R/A}$ by the inductive hypothesis for $(i+1, j-1)$, since $(i+1)+(j-1) = \lambda+1$ and $j-1 < j$. 

The second term also belongs to $P^0 \otimes_R \widehat{\Omega}_{R/A}$ because $\phi^i\left(\delta^{j-1}\left(\frac{x}{p}\right)\right)^{p-1} \in P^0$, and the remaining factor $p^{-i+1}\dif\left(\phi^i\left(\delta^{j-1}\left(\frac{x}{p}\right)\right)\right)$ satisfies the inductive hypothesis for $(i, j-1)$, where $i + (j-1) = \lambda < \lambda+1$.

In particular, taking $i=0$ yields $p\dif\left(\delta^j\left(\frac{x}{p}\right)\right) \in P^0 \otimes_R \widehat{\Omega}_{R/A}$, completing the proof.
\end{proof}

\begin{cor}\label{higherpdif}
    The higher differential operator $p\dif: \widehat{\Omega}^k_{R/A} \to \widehat{\Omega}^{k+1}_{R/A}$ extends uniquely to a $p$-adically continuous map
    $$P \otimes_R \widehat{\Omega}_{R/A}^k \to P \otimes_R \widehat{\Omega}_{R/A}^{k+1}.$$
\end{cor}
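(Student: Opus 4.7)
The plan is to extend $p\dif$ degree by degree via a Leibniz-type formula, bootstrapping from the degree-one operator on $D$ furnished by the previous lemma. Concretely, for $f\in D$ and $\omega\in\hat{\Omega}^k_{R/A}$ I set
\[
    p\dif(f\otimes\omega) := p\dif(f)\wedge\omega \;+\; f\otimes p\dif(\omega),
\]
where the first $p\dif$ on the right is the one from the previous lemma and the second is the $p$-th multiple of the ordinary exterior derivative on $\hat{\Omega}^{\bullet}_{R/A}$.

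The first step is to verify that the degree-one operator $p\dif\colon D\to D\otimes_R\hat{\Omega}_{R/A}$ is in fact an $A$-derivation. On the subring $D^0\subset R\left[\tfrac{1}{p}\right]$ it coincides with $p$ times the restriction of the usual de~Rham differential of $R\left[\tfrac{1}{p}\right]$, and is therefore a derivation there. The Leibniz identity $p\dif(fg)=p\dif(f)g+f\,p\dif(g)$, being bilinear in $(f,g)$, then propagates from $D^0\times D^0$ to the $p$-adic completion $D\times D$ by continuity of multiplication and of $p\dif$.

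Granted this derivation property, the balancedness check $p\dif(fr\otimes\omega)=p\dif(f\otimes r\omega)$ for $r\in R$ collapses, after expansion using the ordinary Leibniz rule for $d$ on forms, to exactly the identity $p\dif(fr)=r\,p\dif(f)+f\,p\dif(r)$ from the first step, so the formula descends to a well-defined operator on the tensor product. The $p$-adic continuity of the new operator is immediate from that of the degree-one one together with the finite freeness of $\hat{\Omega}^k_{R/A}$ over $R$ provided by the toric chart $\sq$. The only non-mechanical point is the derivation property of the first step; everything else is formal manipulation inside a free $D$-module.
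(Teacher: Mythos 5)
Your argument is correct and is essentially the extension the paper has in mind: the paper simply declares the corollary obvious, the implicit reasoning being exactly your Leibniz-type formula $p\dif(f\otimes\omega)=p\dif(f)\wedge\omega+f\otimes p\dif(\omega)$ built on the degree-one operator from the preceding lemma, using the (finite free) module structure of $\hat{\Omega}^k_{R/A}$ and $p$-adic continuity. Your verification of the derivation property on $D^0$ and the balancedness over $R$ just makes explicit what the paper leaves unsaid.
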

\begin{proof}
    By Lemma \ref{lem: extend dif}, the operator $p\mathrm{d}$ extends uniquely to a continuous derivation $p\mathrm{d}: P \to \widehat{\Omega}_P$. We can explicitly define its extension to the $k$-th exterior power $P \otimes_R \widehat{\Omega}^k_{R/A}$ via the Leibniz rule:
\[
p\mathrm{d}(f \otimes \omega) = p\mathrm{d}(f) \wedge \omega + f \otimes p\mathrm{d}(\omega)
\]
for any $f \in P$ and $\omega \in \widehat{\Omega}^k_{R/A}$. This map is well-defined over $R$ precisely because $p\mathrm{d}$ acts as a derivation on both $P$ and $R$. The $p$-adic continuity and uniqueness of this higher extension follow immediately from the continuity and uniqueness of the base derivation on $P$.
\end{proof}

Let $p\partial_i$ denote the extension of $p\frac{\partial}{\partial X_i}$ to $P$. Recall that under the isomorphism of Theorem \ref{compare R}, the extension of the left inclusion $\text{id}\otimes 1: R \to R \widehat{\otimes}_A R$ to $P \to P_{1,2}$ is identified with the standard scalar inclusion $P \to P\left[Y_1, \dots, Y_d\right]_{\mathrm{pd}}^\wedge$.

\begin{prop}
    The extension of the right inclusion $1\otimes \text{id}: R \to R \widehat{\otimes}_A R$ to $P \to P_{1,2}$ is identified with the Taylor expansion map:
    $$f \mapsto \sum_{\alpha \in \N^d} p^{|\alpha|}\partial_{\alpha}(f)\underline{Y}^{[\alpha]}.$$
\end{prop}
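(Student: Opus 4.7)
The plan is to first identify the map explicitly on $R$ using the classical Taylor formula in the PD-envelope of the diagonal, and then to extend to $D$ by exhibiting the given formula as a candidate map and matching it with the canonical extension via the universal property of the prismatic envelope.

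Using the identification $D_{1,2} \cong D[\underline{Y}]_{\mathrm{pd}}^\land$ from Theorem~\ref{compare D}, I begin with $f \in R$. In the PD-envelope of the diagonal ideal $J = \ker(R \hat{\otimes}_A R \to R)$, the classical Taylor formula reads $1 \otimes f = \sum_\alpha \partial^\alpha(f)\, \xi^{[\alpha]}$, where $\xi_i := 1 \otimes X_i - X_i \otimes 1$ and $\partial^\alpha = \prod \partial_i^{\alpha_i}$ are the iterated partial derivatives. Under $\iota_\square$ (Theorem~\ref{compare R}), $\xi_i$ corresponds to $p Y_i$, so $\xi^{[\alpha]} = \prod \frac{(pY_i)^{\alpha_i}}{\alpha_i!} = p^{|\alpha|} \underline{Y}^{[\alpha]}$, yielding the desired formula on $R$ after composing with $R_{1,2} \to D_{1,2}$.

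To extend to all of $D$, I would define $\rho: D \to D_{1,2}$ by the same formula $\rho(f) := \sum_\alpha (p\partial)^\alpha(f)\, \underline{Y}^{[\alpha]}$, using that $p\partial_i$ extends to a $p$-adically continuous derivation of $D$ by the preceding lemma. Two verifications are needed: (i) convergence, i.e.\ $\rho(f) \in D[\underline{Y}]_{\mathrm{pd}}^\land$, which amounts to $(p\partial)^\alpha(f) \to 0$ $p$-adically in $D$; this follows because on the dense subring $D^0 \subset R[1/p]$ the operator $p\partial_i$ acts as $p$ times the $R[1/p]$-derivation, so iterating introduces arbitrarily many factors of $p$ on any fixed element, and the conclusion passes to the completion; (ii) multiplicativity, which combines the Leibniz rule for the derivation $p\partial_i$ with the divided power identity $\underline{Y}^{[\alpha]}\underline{Y}^{[\beta]} = \binom{\alpha+\beta}{\alpha}\underline{Y}^{[\alpha+\beta]}$.

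The main obstacle will be verifying that $\rho$ is $\delta$-equivariant (equivalently $\phi$-equivariant), which is what allows the universal property of $D = \Prism_R(\mathfrak{a})$ to force $\rho$ to coincide with the canonical extension of $R \xrightarrow{1\otimes id} D_{1,2}$ already constructed. My approach would be induction on the generators $\delta^j(x/p)$ of $D$ for $x \in \mathfrak{a}$: the base case $f \in R$ is the previous paragraph, and the inductive step uses $\delta^j(x/p) = \delta(\delta^{j-1}(x/p)) = \bigl(\phi(\delta^{j-1}(x/p)) - \delta^{j-1}(x/p)^p\bigr)/p$ together with the $\delta$-structure on $D_{1,2}$ transported through $\iota_\square$, for which one needs explicit formulas for $\phi(Y_i)$ and $\delta(Y_i)$ obtained from $Y_i = (1 \otimes X_i - X_i \otimes 1)/p$ and $\phi(X_i) = X_i^p + p\delta(X_i)$. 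Matching $\rho\circ\delta$ with $\delta\circ\rho$ on these generators then reduces to formal identities for iterated $p\partial$ and Taylor-type manipulations, concluding the argument.
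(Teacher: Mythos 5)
Your first step (the identification on $R$) is fine and is in substance the paper's own argument: the paper checks the formula on the image of the torus algebra using $1\otimes X_i=X_i\otimes 1+pY_i$ and then invokes the \'etaleness of $\sq$ to see that a lift $R\to R[\underline{Y}]^\land_{\mathrm{pd}}$ of the identity along $Y_i\mapsto 0$ is unique; the ``classical Taylor formula'' you cite is exactly the statement produced by that uniqueness, so up to packaging the two are the same.

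The problem is your passage from $R$ to $D$. You correctly sense that this needs an argument (the paper hides it in ``the claim follows''), but the route you choose --- show that the explicit series $\rho$ is a $\delta$-map and then appeal to the uniqueness clause in the universal property of $\Prism_R(\a)$ --- leaves precisely its crux unproved: the verification that $\rho\circ\delta=\delta\circ\rho$ on the generators $\delta^j(x/p)$ is not a routine ``formal identity,'' and the only natural reason it holds is that $\rho$ coincides with the canonical map of prisms, which is the very statement being proved; as written the plan defers, and essentially begs, its hardest step. It is also an unnecessary detour. Since $\Prism_R^0(\a)\subset R\left[\frac{1}{p}\right]$, every $g\in \Prism_R^0(\a)$ satisfies $p^ng=a$ for some $a\in R$ and $n\geq 0$; the canonical map $D\to D_{1,2}$ is a ring homomorphism, $\rho$ is additive and its coefficients are computed by the operators $p\partial_i$ which by construction are restrictions of the corresponding derivations of $R\left[\frac{1}{p}\right]$, and $D_{1,2}\cong D[\underline{Y}]^\land_{\mathrm{pd}}$ is $p$-torsion free; hence agreement on $R$ forces $p^n$ times both values at $g$ to agree, so they agree on $\Prism_R^0(\a)$, and then on $D$ by $p$-adic continuity (convergence of the series for general $f\in D$ comes out of this identification rather than needing a separate estimate). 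This torsion-freeness-plus-density argument replaces your $\delta$-equivariance induction entirely and is what actually closes the gap between the $R$-statement the paper proves and the $D$-statement the proposition asserts.
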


\begin{proof}
    Consider the following commutative diagram (with solid arrows):
   \begin{center} $$
\begin{tikzcd}
A\langle X_1^{\pm 1}, \dots, X_d^{\pm 1} \rangle \arrow[r, "1 \otimes id"] \arrow[d, "\Box"'] & R[Y_1, \dots, Y_d]_{pd}^\wedge \arrow[d, "Y_i \mapsto 0"] \\
R \arrow[r, "id"'] \arrow[ru, dashed, "\exists !"] & R
\end{tikzcd}
$$\end{center}
    By the \'etaleness of $\sq$, there is a unique dashed arrow lifting the identity on $R$ that makes the diagram commute. Since $X_i \otimes 1 + pY_i = 1 \otimes X_i$, the continuous map $f \mapsto \sum_{\alpha \in \N^d} p^{|\alpha|}\partial_{\alpha}(f)\underline{Y}^{[\alpha]}$ provides exactly such a lift. By uniqueness, the claim follows.
\end{proof}

\subsection{Conventions for cosimplicial rings and stratifications}

Our construction proceeds in two steps. First, for each prismatic crystal, we construct a stratification on the \v{C}ech nerve of $P$ (yielding the functor $\M_{\Prism}$). Subsequently, we construct a $p$-connection for each such stratification.

For clarity, we briefly review our conventions regarding cosimplicial rings. Let $\Ord$ denote the category of finite non-empty totally ordered sets, and let $[n]$ denote the ordered set $\{0,1,\dots,n\}$. Recall that a cosimplicial ring is a covariant functor from $\Ord$ to the category of commutative rings. We will typically denote a cosimplicial ring by $S^\bullet$, writing $S^n := S^\bullet([n])$.

\begin{defi}
    Fix a cosimplicial ring $S^\bullet$.
    \begin{enumerate}
        \item[(1)] For each $0 \leq i \leq n+1$, we let $p^n_i: S^n \to S^{n+1}$ denote the ring homomorphism induced by the unique injective order-preserving map $[n] \to [n+1]$ whose image omits the element $i$. We refer to the maps $p^n_i$ as the \emph{coface maps}.
        \item[(2)] For each $0 \leq i \leq n$, we let $\sigma_i^n: S^{n+1} \to S^n$ denote the ring homomorphism induced by the unique surjective order-preserving map $[n+1] \to [n]$ satisfying $(\sigma_i^n)^{-1}(i) = \{i, i+1\}$. We refer to the maps $\sigma_i^n$ as the \emph{codegeneracy maps}.
    \end{enumerate}
\end{defi}

\begin{defi}[Stratifications]
    For a given cosimplicial ring $S^\bullet$, a \emph{stratification} along $S^\bullet$ consists of a finitely generated projective $S^0$-module $M$, equipped with an $S^1$-linear isomorphism
    $$\ep: S^1 \otimes_{p_1^0, S^0} M \xrightarrow{\sim} S^1 \otimes_{p_0^0, S^0} M$$ 
    satisfying the following two conditions:
    \begin{enumerate}
        \item[(1)] The following cocycle diagram commutes:
        $$\xymatrixcolsep{0.2in}
        \xymatrix@C-2.3pc{ 
            & S^2\otimes_{p^1_2,S^1}S^1\otimes_{p_1^0,S^0}M \ar[rr]^-{id_{S^2}\otimes_{p^1_2,S^1}\ep} \ar@{=}[d] && S^2\otimes_{p^1_2,S^1}S^1\otimes_{p^0_0,S^0}M \ar@{=}[d] & \\
            S^2\otimes_{p^1_1,S^1}S^1\otimes_{p^0_1,S^0}M \ar[ddr]_-{id_{S^2}\otimes_{p^1_1,S^1}\ep} \ar@{=}[r] & S^2\otimes_{q_0,S^0}M && S^2\otimes_{q_1,S^0}M \ar@{=}[r] & S^2\otimes_{p^1_0,S^1}S^1\otimes_{p^0_1,S^0}M \ar[ldd]^-{id_{S^2}\otimes_{p^1_0,S^1}\ep} \\
            && S^2\otimes_{q_2,S^0}M \ar@{=}[dl]\ar@{=}[dr] && \\
            & S^2\otimes_{p^1_1,S^1}S^1\otimes_{p^0_0,S^0}M && S^2\otimes_{p^1_0,S^1}S^1\otimes_{p^0_0,S^0}M &
        }$$
        where $q_i: S^0 \to S^2$ is the homomorphism induced by the unique order-preserving map $[0] \to [2]$ sending $0$ to $i$.
        \item[(2)] The map obtained by applying the degeneracy $\sigma_0^0$ to $\ep$, namely $\sigma_0^0 \otimes \ep: M \to M$, is the identity $id_M$.
    \end{enumerate}
\end{defi}

\subsection[The functor M-Prism]{The functor $\M_{\Prism}$}

Let $(\C,\inte_{\C})$ be a ringed site.

\begin{defi}
    A \emph{$\C$-crystal of rank $r$} is a sheaf of $\inte_{\C}$-modules $\mathbb{H}$ satisfying the following conditions:
    \begin{enumerate}
        \item[(1)] For each object $X\in \C$, the evaluation $\mathbb{H}(X)$ is a locally free $\inte_{\C}(X)$-module of constant rank $r$.
        \item[(2)] For each morphism $f: X \to Y$ in $\C$, the canonical restriction map induces an isomorphism:
        $$ \inte_{\C}(X) \otimes_{\inte_{\C}(Y)} \mathbb{H}(Y) \xrightarrow{\sim} \mathbb{H}(X). $$
    \end{enumerate}
\end{defi}

Assume that $\C$ admits arbitrary finite products. Fix an object $X \in \C$ and let $S^n := \inte_{\C}(X^{n+1})$, where $X^{n+1}$ denotes the product of $n+1$ copies of $X$. It is a standard fact that $S^\bullet$ is naturally equipped with a cosimplicial ring structure; this is called the \emph{\v{C}ech nerve} of $X$.

\begin{prop}
    Given the data above, there is a natural functor from the category of $\C$-crystals to the category of stratifications along the \v{C}ech nerve of $X$.
\end{prop}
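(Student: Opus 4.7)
The plan is to build the functor explicitly from the crystal data. Given a $\C$-crystal $\mathds{H}$ of rank $r$, set $M := \mathds{H}(X)$; by definition this is a finite rank projective $S^0$-module, which will serve as the underlying module of the stratification. The two face maps $p^0_0, p^0_1: S^0 \to S^1$ correspond under $\inte_\C$ to the two projection morphisms $X^1 \to X$ in $\C$. Applying the crystal property to each of these projections yields canonical $S^1$-linear isomorphisms
$$\alpha_i: S^1 \otimes_{p^0_i, S^0} M \xrightarrow{\sim} \mathds{H}(X^1), \qquad i \in \{0,1\},$$
and I would set $\epsilon := \alpha_0^{-1} \circ \alpha_1$. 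This is automatically an $S^1$-linear isomorphism of the required shape.

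For the cocycle condition, I would pass to the triple product $X^2$. The three maps $q_0, q_1, q_2: S^0 \to S^2$ correspond to the three projections $X^2 \to X$, and the crystal property produces canonical isomorphisms $\beta_i: S^2 \otimes_{q_i, S^0} M \xrightarrow{\sim} \mathds{H}(X^2)$. Each face map $p^1_j: S^1 \to S^2$ corresponds to a projection $X^2 \to X^1$, and applying the crystal compatibility to the two-step composition $X^2 \to X^1 \to X$ identifies the base change $\mathrm{id}_{S^2} \otimes_{p^1_j, S^1} \alpha_i$ with $\beta_k$ for the appropriate index $k$ dictated by which factor survives both projections. Once this indexing is fixed, every edge of the hexagonal diagram becomes some $\beta_l^{-1} \beta_m$, and commutativity reduces to the tautology that $\beta_0^{-1}\beta_2 = (\beta_0^{-1}\beta_1)\circ(\beta_1^{-1}\beta_2)$.

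The identity condition follows from the observation that the degeneracy $\sigma^0_0: S^1 \to S^0$ corresponds to the diagonal $\Delta: X \to X^1$, which satisfies $\pi_i \circ \Delta = \mathrm{id}_X$ for both projections $\pi_0, \pi_1$. Hence each $\alpha_i$ base-changes along $\sigma^0_0$ to the identity of $M$, and so does their composite $\epsilon$. Functoriality is then automatic: a morphism $\mathds{H} \to \mathds{H}'$ of crystals restricts to an $S^0$-linear map $M \to M'$, and naturality of the crystal isomorphisms in $\mathds{H}$ guarantees compatibility with the two stratifications.

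The main obstacle is purely bookkeeping in the cocycle step: one must correctly match each composite face map $p^1_j \circ p^0_k$ with the corresponding $q_l$, and verify that the identifications $S^2 \otimes_{p^1_j, S^1} S^1 \otimes_{p^0_k, S^0} M \cong S^2 \otimes_{q_l, S^0} M$ induced by the crystal line up with the vertices of the hexagon as labelled. Once the cosimplicial identities $p^1_j \circ p^0_k$ are tabulated against the three $q_l$'s, the diagram chase is essentially automatic and commutativity telescopes.
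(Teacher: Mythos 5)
Your construction is the same as the paper's: the paper also takes $M=\mathds{H}(X)$ and defines $\ep$ via the canonical crystal identifications on $X\times X$ (phrased there through the swap map), and your verification of the cocycle and identity conditions by telescoping the $\beta_l^{-1}\beta_m$ identifications on $X^2$ is the standard check that the paper leaves implicit. The proposal is correct.
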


\begin{proof}
    This functor is defined by evaluating a given crystal $\mathbb{H}$ on $X$ to obtain the module $M := \mathbb{H}(X)$. The stratification isomorphism $\ep$ is naturally induced by the transposition morphism $X \times X \to X \times X$ sending $(x,y)$ to $(y,x)$.
\end{proof}

We now recall several well-known results regarding the equivalence between crystals and stratifications.

\begin{defi}
    An object $U \in \C$ is called a \emph{cover of the final object} if for any object $T \in \C$, there exists a covering $\{T' \to T\}$ such that $\mathrm{Hom}_{\C}(T',U) \neq \emptyset$.
\end{defi}

\begin{theo}[{\cite[Proposition 4.8]{Tian_2023}}]\label{thm: crystal to strat}
    If $T$ is a cover of the final object, then the natural functor from the category of $\C$-crystals to the category of stratifications along the \v{C}ech nerve of $T$ is an equivalence of categories.
\end{theo}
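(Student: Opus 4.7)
The plan is to construct an explicit quasi-inverse to the functor $\mathds{H}\mapsto (\mathds{H}(T),\ep_{\mathds{H}})$ via $T$-local descent. The hypothesis that $T$ covers the final object means exactly that every object $Y\in\C$ admits a covering $Y'\to Y$ with $\mathrm{Mor}_\C(Y',T)\neq\emptyset$, which is the standard input for descent with respect to the \v{C}ech nerve $S^\bullet=\inte_\C(T^\bullet)$.

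Given a stratification $(M,\ep)$, I first define a rule on objects equipped with a chosen morphism to $T$: for $f\colon Y\to T$, set $\mathds{H}_0(Y,f)=\inte_\C(Y)\otimes_{f^*,S^0}M$. For two maps $f_0,f_1\colon Y\to T$, the pair $(f_1,f_0)$ induces a ring map $S^1\to\inte_\C(Y)$, and pulling back $\ep$ gives a canonical isomorphism $\mathds{H}_0(Y,f_1)\cong \mathds{H}_0(Y,f_0)$. The commutative diagram in the stratification definition says precisely that these isomorphisms compose correctly for three maps $f_0,f_1,f_2\colon Y\to T$, and the normalization condition $\sigma_0^0\otimes\ep=\mathrm{id}_M$ says the transition associated to $(f,f)$ is the identity.

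For a general $Y$, I pick a covering $Y'\to Y$ together with a morphism $f\colon Y'\to T$. The two projections $Y'\times_Y Y'\rightrightarrows Y'$ composed with $f$ give two maps $Y'\times_Y Y'\to T$, and the preceding paragraph produces a canonical isomorphism between the two pullbacks of $\mathds{H}_0(Y',f)$. The cocycle condition makes this genuine descent data, so I define $\mathds{H}(Y)$ by descent along $Y'\to Y$, and check independence of the pair $(Y',f)$ by passing to a common refinement (itself covered by some $Y''$ with a morphism to $T$, again by the covering hypothesis). The resulting sheaf is automatically a crystal because the construction is built by tensoring $M$ along morphisms of $\inte_\C$.

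The two functors are then mutually inverse: applying the stratification functor to the constructed $\mathds{H}$ recovers $M$ by evaluating at $T$ with the identity morphism, and recovers $\ep$ by computing its transition along the two projections $T\times T\to T$; conversely, starting from a crystal $\mathds{H}$, both $\mathds{H}$ and the crystal constructed from its stratification agree on any $Y$ admitting a map to $T$ by the crystal property, and agree in general by uniqueness of descent. The main obstacle is bookkeeping: carefully tracking that the cocycle and normalization axioms of the stratification translate exactly into the cocycle and normalization axioms for \v{C}ech descent along $T\to *$, and verifying that refinements of $(Y',f)$ induce the identity on the descended module.
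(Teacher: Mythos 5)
Your overall strategy is the standard one and is essentially the argument behind the cited result (the paper does not reprove this statement; it invokes Tian's Proposition, whose proof is exactly this kind of \v{C}ech descent along the nerve of $T$). Full faithfulness and the construction of $\mathds{H}$ on objects admitting a morphism to $T$ are fine: the cocycle and normalization axioms of $(M,\ep)$ do translate into independence of the chosen morphism $f\colon Y\to T$, as you say. Note, though, that even at this stage the sheaf axiom for the assignment $(Y,f)\mapsto\inte_{\C}(Y)\otimes_{f,\inte_{\C}(T)}M$ is not free: it uses that $M$ is finite projective, so that tensoring with $M$ commutes with the equalizers expressing the sheaf property of $\inte_{\C}$. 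This point is worth making explicit.

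The genuine gap is at the step ``define $\mathds{H}(Y)$ by descent \dots\ the resulting sheaf is automatically a crystal because the construction is built by tensoring $M$.'' For a general $Y$ admitting no morphism to $T$, the sheaf axiom forces $\mathds{H}(Y)$ to be the equalizer of $\mathds{H}_0(Y',f)\rightrightarrows \mathds{H}_0(Y'\times_Y Y',\,\cdot\,)$, but nothing in the bare hypotheses (a ringed site with finite products in which $T$ covers the final object) guarantees that this equalizer is a finite rank projective $\inte_{\C}(Y)$-module, nor that for an arbitrary morphism $Z\to Y$ the base-change map $\inte_{\C}(Z)\otimes_{\inte_{\C}(Y)}\mathds{H}(Y)\to \mathds{H}(Z)$ is an isomorphism. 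That is precisely effectivity of descent for finite projective modules along $\inte_{\C}(Y)\to\inte_{\C}(Y')$, and it is the substantive input of the theorem, not bookkeeping: your argument asserts the conclusion of this descent rather than proving it. In the intended application, the prismatic site, it holds because covers are $p$-completely faithfully flat on structure rings, so finite projective modules satisfy descent along them; this is where the cited proof actually does its work. To repair your write-up, either add the faithfully flat (or effective-descent) hypothesis on coverings explicitly and invoke descent for finite projective modules at this step, or formulate the equivalence only for sites where that input is available.
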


\begin{theo}\label{coverprism}
    If the ideal $\a/p$ is generated by a Koszul-regular sequence in $R_0$, then $(P,\delta)$ is a cover of the final object in the prismatic site $(B/A)_{\Prism}$.
\end{theo}

\begin{proof}
    By \cite[Theorem 4.3.6]{bhatt2022absolute}, the quotient $P/p$ is isomorphic to the derived prismatic cohomology $\Prism_{B/R} \otimes^{\mathbf{L}}_R R/p$. Consider the conjugate filtration on $\Prism_{B/R} \otimes^{\mathbf{L}}_R R/p$. By the Hodge--Tate comparison theorem, each graded piece of the conjugate filtration is flat over $B$. Consequently, $P/p \cong \Prism_{B/R} \otimes^{\mathbf{L}}_R R/p$ is flat over $B$.

    Given this flatness, the remainder of the proof follows identically to the arguments found in \cite[Lemma 2.21 and Proposition 2.22]{ogus2023crystalline}, \cite[Theorem 3.13]{Bhatt_2022}, or \cite{Tian_2023}.
\end{proof}

As a consequence of the preceding theorems, we can now formally define the functor $\M_{\Prism}$. Let $P^\bullet$ denote the \v{C}ech nerve of $P$ in the prismatic site $(B/A)_{\Prism}$. Note that $P^0 = P$, and $P^1$ is precisely the coproduct $P_{1,2}$ whose structure we computed explicitly in Theorem \ref{compare D}.

\begin{defi}
    We define $\M_{\Prism}$ to be the natural evaluation functor:
    $$ \M_{\Prism}: \mathbf{Crys}((B/A)_{\Prism}) \xrightarrow{\sim} \mathbf{Strat}(P^\bullet) $$
    from the category of prismatic crystals on $(B/A)_{\Prism}$ to the category of stratifications along the cosimplicial ring $P^\bullet$. Since $P$ is a cover of the final object, $\M_{\Prism}$ is an equivalence of categories.
\end{defi}

\subsection[Stratifications and p-connections]{Stratifications and $p$-connections}

The equivalence between stratifications and specific classes of connections is a standard result. Our notation largely follows that of Wang \cite{wang2024prismatic}.

In what follows, we let $X_{j,i}$ denote the element $1 \otimes \dots \otimes 1 \otimes X_j \otimes 1 \otimes \dots \otimes 1$ in the $n$-fold completed tensor product $R \widehat{\otimes}_A \dots \widehat{\otimes}_A R$, where $X_j$ is placed in the $(i+1)$-th tensor factor. Let $\underline{X}_i := (X_{1,i}, \dots, X_{d,i})$ be the corresponding tuple.

Consider the \v{C}ech nerve of the object $P$. It is naturally isomorphic to the following cosimplicial ring:
$$P^\bullet = P[\underline{Y}_1, \underline{Y}_2, \dots, \underline{Y}_\bullet]^\wedge_{\mathrm{pd}},$$ 
where 
$$\underline{Y}_i := \frac{\underline{X}_i - \underline{X}_0}{p}.$$ 
The face maps $p_i^n$ and degeneracy maps $\sigma_i^n$ are explicitly described as follows. For any $f \in P$,
$$ p_{i}^n(f) = 
\begin{cases} 
    f & \text{if } i > 0, \\ 
    \sum_{\alpha\in\N^d} p^{|\alpha|} \partial_{\alpha}(f) \underline{Y}_1^{[\alpha]} & \text{if } i = 0. 
\end{cases} $$
For the variables $\underline{Y}_j$, the face maps are:
$$ p_{i}^n(\underline{Y}_j) = 
\begin{cases} 
    \underline{Y}_{j+1} - \underline{Y}_1 & \text{if } i = 0, \\ 
    \underline{Y}_{j+1} & \text{if } 0 < i \leq j, \\ 
    \underline{Y}_j & \text{if } i > j. 
\end{cases} $$
The degeneracy maps are:
$$ \sigma_{i}^n(\underline{Y}_j) = 
\begin{cases} 
    0 & \text{if } (i,j) = (0,1), \\ 
    \underline{Y}_{j-1} & \text{if } i < j \text{ and } (i,j) \neq (0,1), \\ 
    \underline{Y}_j & \text{if } i \geq j. 
\end{cases} $$

Suppose we are given a stratification $(M,\ep)$. We can view both $P^1 \otimes_{p_1^0,P} M$ and $P^1 \otimes_{p^0_0,P} M$ as the completed PD-polynomial module $M[\underline{Y}_1]^\wedge_{\mathrm{pd}}$. Thus, $\ep$ can be viewed as an $A$-linear automorphism of $M[\underline{Y}_1]^\wedge_{\mathrm{pd}}$. By $P^1$-linearity, it is uniquely determined by its restriction to $M$. Suppose $\ep$ is of the form
$$\ep(m) = \sum_{\alpha \in \N^d} \theta_\alpha(m) \underline{Y}_1^{[\alpha]} \quad \text{for all } m \in M.$$
We note that $\theta_{\alpha}$ tends to $0$ $p$-adically as $|\alpha| \to \infty$.

\begin{rmk}
    There is a unique algebra automorphism of $P[\underline{Y}_1]^\wedge_{\mathrm{pd}}$ sending $\underline{Y}_1$ to itself and $f \in P$ to 
    $$\sum_{\alpha\in\N^d} p^{|\alpha|} \partial_{\alpha}(f) \underline{Y}_1^{[\alpha]}.$$
    This automorphism is compatible with the transposition $\iota: (x \otimes y) \mapsto (y \otimes x)$ on $R \widehat{\otimes}_A R$. Since $\iota \circ p_0^0 = p_1^0$, this identifies $P^1 \otimes_{p^0_0,P} M$ with $M[\underline{Y}_1]^\wedge_{\mathrm{pd}}$, but with the scalar multiplication twisted as $a \cdot m := \iota(a)m$.
\end{rmk}

Using the same method as in the above remark, we can also identify $P^2 \otimes_{p^1_i,P^1} M$ with $M[\underline{Y}_1, \underline{Y}_2]^\wedge_{\mathrm{pd}}$ for $i=0,1,2$.

\begin{prop}\label{equiv theta}
    Suppose $\{\theta_{\alpha}\}_{\alpha \in \N^d}$ is a family of $A$-linear endomorphisms of $M$ such that $\theta_{\alpha} \to 0$ as $|\alpha| \to \infty$. Then the map
    $$\ep(m) = \sum_{\alpha \in \N^d} \theta_\alpha(m) \underline{Y}_1^{[\alpha]}$$ 
    defines a stratification if and only if:
    \begin{enumerate}
        \item[(1)] $\theta_{\underline{0}} = \mathrm{id}_M$, which is equivalent to $\sigma_{0}^0 \otimes \ep = \mathrm{id}_M$;
        \item[(2)] For $1 \leq i \leq d$, let $e_i \in \N^d$ be the $i$-th standard basis vector. Then for all $m \in M$ and $f \in P$:
        \begin{align*}
            \begin{cases}
                \theta_{e_i}(fm) = f\theta_{e_i}(m) + p\partial_i(f)m, \\ 
                \theta_{e_i} \circ \theta_{e_j} = \theta_{e_j} \circ \theta_{e_i} \quad \forall i,j.
            \end{cases}
        \end{align*}
        \item[(3)] For any $\gamma = (\gamma_1, \gamma_2, \dots, \gamma_d) \in \N^d$, $\theta_{\gamma} = \prod_{i=1}^d \theta_{e_i}^{\gamma_i}$.
    \end{enumerate}
\end{prop}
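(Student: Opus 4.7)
The strategy is to translate the two stratification axioms---the degeneracy normalization $\sigma_0^0 \otimes \ep = \mathrm{id}_M$ and the cocycle identity on $D^2$---into algebraic identities among the coefficients $\theta_\alpha$, using the explicit face- and degeneracy-map formulas on $D^\bullet$ recalled above. Condition (1) is immediate: since $\sigma_0^0$ kills $\underline{Y}_1$, the map $\sigma_0^0 \otimes \ep$ merely extracts the constant term $\theta_{\underline{0}}$ of $\ep(m)$, so the axiom $\sigma_0^0 \otimes \ep = \mathrm{id}_M$ is literally the same as $\theta_{\underline{0}} = \mathrm{id}$. The derivation identity in (2) then comes out of the $S^1$-linearity built into the definition of $\ep$: the source and target of $\ep$, after the identification of the preceding remark, differ only by the Taylor twist $p_0^0(f) = \sum_\alpha p^{|\alpha|}\partial_\alpha(f)\underline{Y}_1^{[\alpha]}$, so the twisted linearity $\ep(fm) = \iota(f)\cdot \ep(m)$ unwinds to
$$\sum_\gamma \theta_\gamma(fm)\underline{Y}_1^{[\gamma]} = \Bigl(\sum_\alpha p^{|\alpha|}\partial_\alpha(f)\underline{Y}_1^{[\alpha]}\Bigr)\Bigl(\sum_\beta \theta_\beta(m)\underline{Y}_1^{[\beta]}\Bigr),$$
and matching the coefficient of $\underline{Y}_1^{[e_i]}$ (using $\theta_{\underline{0}} = \mathrm{id}$) gives exactly $\theta_{e_i}(fm) = f\theta_{e_i}(m) + p\partial_i(f)m$.

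The commutativity in (2) and the multiplicativity (3) both come from the cocycle identity $p_1^{1,*}\ep = p_0^{1,*}\ep \circ p_2^{1,*}\ep$. With the face-map formulas $p_2^1(\underline{Y}_1) = \underline{Y}_1$, $p_1^1(\underline{Y}_1) = \underline{Y}_2$, $p_0^1(\underline{Y}_1) = \underline{Y}_2 - \underline{Y}_1$, applying both sides to $1 \otimes m \in D^2 \otimes_{q_0, D} M$ yields the identity
$$\sum_\gamma \underline{Y}_2^{[\gamma]} \otimes \theta_\gamma(m) = \sum_{\alpha,\beta} \underline{Y}_1^{[\beta]}(\underline{Y}_2 - \underline{Y}_1)^{[\alpha]} \otimes \theta_\alpha(\theta_\beta(m))$$
in $D^2 \otimes_{q_2, D} M$. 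Expanding the right-hand side via the PD-binomial identity $(\underline{Y}_2 - \underline{Y}_1)^{[\alpha]} = \sum_{\alpha'+\alpha''=\alpha}(-1)^{|\alpha''|}\underline{Y}_2^{[\alpha']}\underline{Y}_1^{[\alpha'']}$ and comparing the coefficient of $\underline{Y}_1^{[e_i]}\underline{Y}_2^{[e_j]}$ produces $\theta_{e_i+e_j} = \theta_{e_j}\theta_{e_i}$. Swapping $i$ and $j$ gives the commutativity in (2), and an induction on $|\gamma|$ using coefficients of $\underline{Y}_1^{[\gamma - e_i]}\underline{Y}_2^{[e_i]}$ gives (3) in full.

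For the converse, starting from $\{\theta_\alpha\}$ satisfying (1)--(3), I define $\ep(m) = \sum_\alpha \theta_\alpha(m)\underline{Y}_1^{[\alpha]}$ and run each of the three computations in reverse. The degeneracy axiom is again literally (1). For the $D^1$-linearity I first derive, by induction on $|\gamma|$ from (3) and (2a), the Leibniz-type formula
$$\theta_\gamma(fm) = \sum_{\alpha+\beta=\gamma}\binom{\gamma}{\alpha} p^{|\alpha|}\partial_\alpha(f)\theta_\beta(m),$$
which upon summation against $\underline{Y}_1^{[\gamma]}$ collapses to $\ep(fm) = \iota(f) \cdot \ep(m)$. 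The cocycle axiom then follows after using (3) to replace $\theta_\alpha\theta_\beta$ by $\theta_{\alpha+\beta}$ on both sides, reducing the identity to the PD-additivity $\underline{Y}_2^{[\gamma]} = \sum_{\alpha+\beta=\gamma}(\underline{Y}_2 - \underline{Y}_1)^{[\alpha]}\underline{Y}_1^{[\beta]}$.

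The main technical obstacle is keeping track of the three distinct scalar structures $q_0, q_1, q_2 \colon D \to D^2$ on the various base-changes of $M$ together with the $\iota$-twist hidden in the target of $\ep$; one has to be careful not to mix up these module structures before extracting PD-coefficients. Once the twisted structures are correctly aligned, what remains is routine manipulation with PD-binomial coefficients.
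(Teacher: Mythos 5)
Your proposal is correct and follows essentially the same route as the paper: translate the degeneracy axiom into $\theta_{\underline{0}}=\mathrm{id}$, unwind the $D^1$-linearity through the Taylor-twisted identification to get the $p$-Leibniz rule, and expand the cocycle identity on $D^2$ via the PD-binomial formula to extract commutativity and $\theta_\gamma=\prod_i\theta_{e_i}^{\gamma_i}$, with the converse obtained by running the same computations backwards (Leibniz by induction from (2)--(3), and the cocycle reducing to PD-additivity, which is the paper's telescoping sum in disguise). The only difference is cosmetic: you compare coefficients of $\underline{Y}_1^{[e_i]}\underline{Y}_2^{[e_j]}$ and $\underline{Y}_1^{[\gamma-e_i]}\underline{Y}_2^{[e_i]}$, whereas the paper fixes the $\underline{Y}_2^{[\gamma]}$-coefficient first and then compares $\underline{Y}_1^{[e_i]}$-coefficients.
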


\begin{proof}
    By definition, the map $\sigma^0_0: P^1 \to P$ acts by evaluating at $\underline{Y}_1 = 0$. Thus $\sigma^0_0 \otimes \ep$ is exactly $\theta_{\underline{0}}$, and the equivalence (1) follows immediately.

    Under the identifications established above, we compute the three pullbacks of $\ep$ to $P^2$:
    \begin{align*}
        id_{P^2} \otimes_{p^1_2, P^1} \ep(\underline{Y}_1^{[u]} \underline{Y}_2^{[v]} m) &= \sum_{\alpha \in \N^d} \theta_\alpha(m) \underline{Y}_1^{[\alpha]} \underline{Y}_1^{[u]} \underline{Y}_2^{[v]}, \\
        id_{P^2} \otimes_{p^1_0, P^1} \ep(\underline{Y}_1^{[u]} \underline{Y}_2^{[v]} m) &= \sum_{\alpha \in \N^d} \theta_\alpha(m) (\underline{Y}_2 - \underline{Y}_1)^{[\alpha]} \underline{Y}_1^{[u]} \underline{Y}_2^{[v]}, \\
        id_{P^2} \otimes_{p^1_1, P^1} \ep(\underline{Y}_1^{[u]} \underline{Y}_2^{[v]} m) &= \sum_{\alpha \in \N^d} \theta_\alpha(m) \underline{Y}_2^{[\alpha]} \underline{Y}_1^{[u]} \underline{Y}_2^{[v]}.
    \end{align*}
    Therefore, the cocycle descent condition is equivalent to:
    \begin{align*}
        &\sum_{\alpha \in \N^d} \sum_{\beta \in \N^d} \theta_\beta(\theta_\alpha(m)) (\underline{Y}_2 - \underline{Y}_1)^{[\beta]} \underline{Y}_1^{[\alpha]} = \sum_{\alpha \in \N^d} \theta_\alpha(m) \underline{Y}_2^{[\alpha]} \\
        \Longleftrightarrow \quad & \sum_{\alpha \in \N^d} \theta_\alpha(m) \underline{Y}_2^{[\alpha]} = \sum_{\alpha, \beta \in \N^d} \theta_\beta(\theta_\alpha(m)) \sum_{k+l=\beta} (-1)^{|l|} \underline{Y}_2^{[k]} \underline{Y}_1^{[l]} \underline{Y}_1^{[\alpha]} \\
        \Longleftrightarrow \quad & \theta_\gamma(m) = \sum_{\alpha, \beta \in \N^d} (-1)^{|\beta|} \theta_{\beta+\gamma}(\theta_\alpha(m)) \underline{Y}_1^{[\alpha]} \underline{Y}_1^{[\beta]} \quad \text{for all } \gamma \in \N^d. \label{con:000} \tag{*}
    \end{align*}

    On the other hand, since $\ep$ is $P^1$-linear, we have:
    \begin{align*}
        \ep(fm) &= \sum_{\alpha \in \N^d} \theta_\alpha(fm) \underline{Y}_1^{[\alpha]} \\
        &= \left( \sum_{\alpha \in \N^d} p^{|\alpha|} \partial_{\alpha}(f) \underline{Y}_1^{[\alpha]} \right) \left( \sum_{\alpha \in \N^d} \theta_\alpha(m) \underline{Y}_1^{[\alpha]} \right) \\
        &= \sum_{\alpha \in \N^d} \underline{Y}_1^{[\alpha]} \sum_{\beta \leq \alpha} p^{|\beta|} \binom{\alpha}{\beta} \partial_{\beta}(f) \theta_{\alpha-\beta}(m).
    \end{align*}
    Comparing coefficients, $(M,\ep)$ is a stratification if and only if $\theta_{\underline{0}} = \mathrm{id}_M$, the descent condition \eqref{con:000} holds, and for any $\gamma \in \N^d$, $f\in P$, and $m \in M$, we have:
    \begin{equation*}\tag{**}\label{con:001}
        \theta_{\gamma}(fm) = \sum_{\beta \leq \gamma} p^{|\beta|} \binom{\gamma}{\beta} \partial_{\beta}(f) \theta_{\gamma-\beta}(m).
    \end{equation*}

    Taking $\gamma = e_i$, equation \eqref{con:001} simplifies to $\theta_{e_i}(fm) = f\theta_{e_i}(m) + p\partial_i(f)m$. By comparing the coefficients of $\underline{Y}_1^{[e_i]}$ in \eqref{con:000}, we obtain $\theta_{\gamma+e_i} = \theta_{\gamma} \circ \theta_{e_i}$. Setting $\gamma = e_j$, this yields the commutativity $\theta_{e_i} \circ \theta_{e_j} = \theta_{e_j} \circ \theta_{e_i}$. By induction, we deduce $\theta_{\gamma} = \prod_{i=1}^d \theta_{e_i}^{\gamma_i}$.

    It remains to verify the converse: that the commutativity $\theta_{e_i} \circ \theta_{e_j} = \theta_{e_j} \circ \theta_{e_i}$ and the product formula $\theta_{\gamma} = \prod_{i=1}^d \theta_{e_i}^{\gamma_i}$ imply both \eqref{con:000} and \eqref{con:001}. Equation \eqref{con:000} follows from a straightforward combinatorial cancellation:
    \begin{align*}
        \sum_{\alpha,\beta \in \N^d} (-1)^{|\beta|} \theta_{\beta+\gamma}(\theta_{\alpha}(m)) \underline{Y}_1^{[\alpha]} \underline{Y}_1^{[\beta]} 
        &= \sum_{\alpha,\beta \in \N^d} (-1)^{|\beta|} \underline{\theta}^{\beta} \underline{\theta}^\gamma \underline{\theta}^\alpha \binom{\alpha+\beta}{\alpha} \underline{Y}_1^{[\alpha+\beta]} \\
        &= \theta_{\gamma} \circ \sum_{\alpha,\beta \in \N^d} (-1)^{|\beta|} \underline{\theta}^{\beta} \underline{\theta}^\alpha \binom{\alpha+\beta}{\alpha} \underline{Y}_1^{[\alpha+\beta]} \\
        &= \theta_\gamma(m).
    \end{align*}
    Equation \eqref{con:001} follows by induction on $|\gamma|$. With the convention that $\binom{a}{b}=0$ for $b<0$, we have:
    \begin{align*}
        \theta_{\gamma+e_i}(fm) &= \theta_{\gamma}\left(f\theta_{e_i}(m) + p\partial_i(f)m\right) \\
        &= \sum_{\beta \leq \gamma} p^{|\beta|} \binom{\gamma}{\beta} \partial_{\beta}(f) \theta_{\gamma-\beta}(\theta_{e_i}(m)) + \sum_{\beta \leq \gamma} p^{|\beta+e_i|} \binom{\gamma}{\beta} \partial_{\beta+e_i}(f) \theta_{\gamma-\beta}(m) \\
        &= \sum_{\beta \leq \gamma} p^{|\beta|} \left[ \binom{\gamma+e_i}{\beta} - \binom{\gamma}{\beta-e_i} \right] \partial_{\beta}(f) \theta_{\gamma+e_i-\beta}(m) \\
        &\quad + \sum_{e_i \leq \beta \leq \gamma+e_i} p^{|\beta|} \binom{\gamma}{\beta-e_i} \partial_{\beta}(f) \theta_{\gamma-\beta+e_i}(m) \\
        &= \sum_{\beta \leq \gamma+e_i} p^{|\beta|} \binom{\gamma+e_i}{\beta} \partial_{\beta}(f) \theta_{\gamma+e_i-\beta}(m).
    \end{align*}
    This completes the proof.
\end{proof}

\begin{rmk}
    Note that the formula for $\ep$ can be formally rewritten as an exponential map:
    \begin{align*}
        \sum_{\alpha \in \N^d} \theta_\alpha(m) \underline{Y}_1^{[\alpha]} = \left( \prod_{i=1}^d \exp(\theta_{e_i} Y_{1,i}) \right)(m),
    \end{align*}
    where $Y_{1,i}$ is the $i$-th component of $\underline{Y}_1$.
\end{rmk}

Hence, we arrive at the following theorem:

\begin{theo}\label{localDprism}
    \begin{enumerate}
        \item[(1)] There is an equivalence of categories:
        $$ \text{stratifications along } P^\bullet
        \cong \text{integrable topologically quasi-nilpotent $p$-connections on } P. $$
        \item[(2)] If $\a$ is locally generated, modulo $p$, by a Koszul-regular sequence, the composition of the above equivalence with the functor $\M_{\Prism}$ provides a rank-preserving equivalence of categories:
        $$ \D_\Prism: \mathbf{Crys}_{(B/A)_{\Prism}} \xrightarrow{\sim}
        \text{integrable topologically quasi-nilpotent $p$-connections on } P. $$
    \end{enumerate}
\end{theo}
\begin{proof}
    (1) Let $M$ be a finitely generated projective $P$-module. By Proposition \ref{equiv theta}, endowing $M$ with a stratification $\ep$ along $P^\bullet$ is equivalent to specifying a family of $A$-linear endomorphisms $\{\theta_\alpha\}_{\alpha \in \N^d}$ satisfying the given descent conditions. Specifically, the stratification is uniquely determined by the degree-one operators $\theta_{e_i}: M \to M$. The relation $\theta_{e_i}(fm) = f\theta_{e_i}(m) + p\partial_i(f)m$ precisely means that the operators $\theta_{e_i}$ define a $p$-connection on $M$ over $P$. The commutativity condition $\theta_{e_i} \circ \theta_{e_j} = \theta_{e_j} \circ \theta_{e_i}$ is equivalent to the integrability of this $p$-connection. Finally, the product formula $\theta_\gamma = \prod_{i=1}^d \theta_{e_i}^{\gamma_i}$ combined with the decay condition ($\theta_\alpha \to 0$ $p$-adically as $|\alpha| \to \infty$) is exactly the definition of topological quasi-nilpotence for the $p$-connection. This establishes the first equivalence.

    (2) By Theorem \ref{coverprism}, the Koszul-regularity assumption ensures that the prismatic envelope $P$ is a cover of the final object in the prismatic site $(B/A)_{\Prism}$. Consequently, Theorem \ref{thm: crystal to strat} guarantees that the natural evaluation functor $\M_{\Prism}$ induces an equivalence of categories between prismatic crystals on $(B/A)_{\Prism}$ and stratifications along the \v{C}ech nerve $P^\bullet$. Composing this equivalence with the one established in part (1) yields the functor $\mathbb{D}_{\Prism}$. Since both intermediate equivalences preserve the underlying locally free modules, the composed equivalence is manifestly rank-preserving.
\end{proof}
We will denote the category of integrable topologically quasi-nilpotent $p$-connections on $P$ by $p\text{-}\mathbf{MIC}_{P}^{\nil}$.

\subsection{Independence of choice of chart}

The method in this subsection follows Wang \cite{wang2024prismatic}. We want to show that the construction does not depend on the choice of
$$\square:A\langle\underline{X}^{\pm1}\rangle\to R.$$

Suppose $J=\ker(\sigma^0_0:P^1\to P)$ and $I=\ker(R\widehat{\otimes}_AR\to R)$. Then there is an isomorphism $I/I^2\cong \Omega_{R/A_0}$.

\begin{lemma}\label{lem:free-diff}
    There is a unique isomorphism $$P\otimes_R I/I^2\cong J/J^{[2]}$$ mapping $[a\otimes 1 - 1\otimes a] \in I/I^2$ to the unique element $x \in J/J^{[2]}$ such that $px = [a\otimes 1 - 1\otimes a]$. Here, $J^{[2]}$ denotes the second PD-power of $J$. Via the standard identification $I/I^2 \cong \widehat{\Omega}_{R/A}$, this yields $P\otimes_R\widehat{\Omega}_{R/A} \cong J/J^{[2]}$.
\end{lemma}

\begin{proof}
    For any $a\in R$, the class $[a\otimes1-1\otimes a]\in J/J^{[2]}$ lies in $pJ/J^{[2]}$. Since $J/J^{[2]}$ is a free $P$-module, there is a unique element $x\in J/J^{[2]}$ such that $px=[a\otimes1-1\otimes a]$. The assignment $[a\otimes 1-1\otimes a]\mapsto x$ is $R$-linear and hence induces a homomorphism $P\otimes_R I/I^2\to J/J^{[2]}$. The calculations on a toric chart above show that this is an isomorphism.
\end{proof}

For any stratification $(M,\ep)$, define $\theta':M\to M\otimes_P\Omega_P$ as follows. For $m\in M$, take the image of $(\ep(m)-1\otimes_{p^0_0}m)\in J\otimes_{p^0_0,P}M$ in $J/J^{[2]}\otimes_PM\cong M\otimes_P\Omega_P$.

\begin{theo}\label{charttheta}
    We have $\theta=\theta'$.
\end{theo}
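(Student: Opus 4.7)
The plan is to unwind both definitions of $\theta$ and $\theta'$ in the toric coordinates of Section \ref{toric}, and verify that they agree term by term. The whole statement should reduce to computing the image of $\ep(m)-1\otimes m$ modulo $J^{[2]}$ and matching it against the coefficients $\theta_{e_i}$ appearing in the expansion of $\ep$.

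First I would recall how $\theta$ is extracted from the stratification. By Proposition \ref{equiv theta}, writing $\ep(m)=\sum_{\alpha\in\N^d}\theta_\alpha(m)\underline{Y}_1^{[\alpha]}$, the associated integrable $p$-connection on $M$ is
$$\theta(m)=\sum_{i=1}^d\theta_{e_i}(m)\otimes \dif X_i,$$
because the Leibniz identity $\theta_{e_i}(fm)=f\theta_{e_i}(m)+p\partial_i(f)m$ in that proposition is exactly the coordinate-wise statement that $\theta$ is a $p$-connection with components $\theta_{e_i}$.

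Next I would compute $\theta'$ explicitly. Since $\theta_{\underline{0}}=\mathrm{id}$, we have
$$\ep(m)-1\otimes_{p^0_0}m=\sum_{|\alpha|\geq 1}\theta_\alpha(m)\underline{Y}_1^{[\alpha]}\in J\otimes_{p^0_0,D}M.$$
Reducing modulo $J^{[2]}\otimes_D M$, every divided-power monomial $\underline{Y}_1^{[\alpha]}$ with $|\alpha|\geq 2$ lies in $J^{[2]}$, so only the linear terms survive and
$$\ep(m)-1\otimes m\equiv\sum_{i=1}^d\theta_{e_i}(m)\,\underline{Y}_{1,i}\pmod{J^{[2]}\otimes M}.$$

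Finally I would apply the preceding lemma. In the toric chart, the image of $1\otimes X_i-X_i\otimes 1$ in $D^1$ equals $p\underline{Y}_{1,i}$ (by the definition $\underline{Y}_i=(\underline{X}_i-\underline{X}_0)/p$), so the isomorphism $D\otimes_R\hat\Omega_{R/A}\xrightarrow{\sim}J/J^{[2]}$ of the preceding lemma sends $\dif X_i=[1\otimes X_i-X_i\otimes 1]$ to $\underline{Y}_{1,i}$. Transporting the congruence above across this identification yields
$$\theta'(m)=\sum_{i=1}^d\theta_{e_i}(m)\otimes \dif X_i=\theta(m),$$
which is the desired equality. No step is essentially difficult; the only subtlety is tracking the factor of $p$ correctly when invoking the lemma, since the lemma computes the class in $J/J^{[2]}$ by dividing the visible representative $[1\otimes X_i-X_i\otimes 1]$ by $p$.
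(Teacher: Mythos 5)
Your proposal is correct and is exactly the ``tautological'' direct verification that the paper delegates to \cite{wang2024prismatic}: expand $\ep(m)$ in divided-power monomials, kill everything of degree $\geq 2$ in $J^{[2]}$, and identify the surviving linear coefficients $\theta_{e_i}(m)\,\underline{Y}_{1,i}$ with $\sum_i\theta_{e_i}(m)\otimes\dif X_i$ via the lemma on $J/J^{[2]}\cong D\otimes_R\hat{\Omega}_{R/A}$, keeping track of the factor of $p$ coming from $1\otimes X_i-X_i\otimes 1=p\underline{Y}_{1,i}$. The only caveat is the usual sign convention in identifying $I/I^2$ with $\hat{\Omega}_{R/A}$, which you use consistently, so the argument goes through.
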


\begin{proof}
    By Proposition \ref{equiv theta}, the stratification isomorphism is given by the expansion $$\ep(m) = \sum_{\alpha \in \N^d} \theta_\alpha(m) \underline{Y}_1^{[\alpha]}.$$ Modulo $J^{[2]}$, all higher-degree terms with $|\alpha| \ge 2$ vanish. Therefore, the projection of $\ep(m) - 1\otimes_{p^0_0} m$ into $J/J^{[2]} \otimes_P M$ is precisely the degree-one part $\sum_{i=1}^d \theta_{e_i}(m) \otimes \underline{Y}_1^{[e_i]}$. Under the isomorphism $J/J^{[2]} \cong \Omega_P$ from Lemma \ref{lem:free-diff}, the basis element $\underline{Y}_1^{[e_i]}$ corresponds exactly to $dX_i$. Thus, we obtain $\theta'(m) = \sum_{i=1}^d \theta_{e_i}(m) \otimes dX_i$, which is exactly the definition of the connection $1$-form $\theta(m)$ associated to the operators $\{\theta_{e_i}\}$.
\end{proof}

\subsection{Cohomological comparison: Local version}\label{localcohcompare}

We maintain the notation from the previous subsections. Let $\X=\Spec(R_0/\a)$ and $Y=\Spf(R)$. Recall that there is a Frobenius lift $\phi_R$ on $R$. Fix $\mathbb{H}\in\mathbf{Crys}_{(\X/A)_{\Prism}}$ and let $(\mathcal{M},\nabla_{\mathcal{M}})=\D_{\Prism}(\mathbb{H})$. Let $\DR(\mathcal{M},\nabla_{\mathcal{M}})$ be the corresponding de Rham complex. In the calculations below, it will be more convenient to use the twisted complex
$$\DR(\mathcal M,p^{-1}\nabla_{\mathcal M}):
\mathcal M\to \mathcal M\otimes_P\widehat\Omega^1_{R/A}\{-1\}\to
\mathcal M\otimes_P\widehat\Omega^2_{R/A}\{-2\}\to \cdots,$$
where $\widehat\Omega^i_{R/A}\{-i\}$ denotes the same underlying module as $\widehat\Omega^i_{R/A}$, but with the differential divided by the corresponding power of $p$. Equivalently, the differential is induced by $p^{-1}\nabla_{\mathcal M}$. Since tensoring the $i$-th term with the formal symbol $p^{-i}$ only renormalizes the grading and does not change the underlying de Rham data, the natural identification
$$\mathcal M\otimes_P\widehat\Omega^i_{R/A}\cong
\mathcal M\otimes_P\widehat\Omega^i_{R/A}\{-i\}$$
identifies $\DR(\mathcal M,\nabla_{\mathcal M})$ with $\DR(\mathcal M,p^{-1}\nabla_{\mathcal M})$ up to this harmless Tate twist. Thus proving the comparison for the twisted complex is equivalent to proving it for $\DR(\mathcal M,\nabla_{\mathcal M})$.
We will prove the following theorem:

\begin{theo}\label{localcoh}
    There is an explicitly constructed quasi-isomorphism
    $$\rho_{R,\phi_R}:R\Gamma\left((\X/A)_{\Prism},\mathbb{H}\right)\simeq \DR(\mathcal{M},\nabla_{\mathcal{M}})$$
\end{theo}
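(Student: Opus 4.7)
The plan is to realize both sides of the comparison as explicit objects built from the covering $D$, and exhibit a morphism between them whose being a quasi-isomorphism reduces to a divided-power Poincar\'e lemma. By the results of the previous subsection $(D,\delta)$ is a covering of the final object of $(\X/A)_\Prism$, so the prismatic cohomology of the crystal $\mathds{H}$ is computed by the \v{C}ech--Alexander complex $C^\bullet(\mathds{H})$ with $n$-th term $\mathds{H}(D^n) = D^n \otimes_{p^0_0,D} \mathcal{M}$ and differential the alternating sum of face maps. Iterating theorem \ref{compare D} identifies $D^n$ with the $p$-adically completed PD-polynomial ring $D[\underline{Y}_1,\dots,\underline{Y}_n]^{\wedge}_{\mathrm{pd}}$, and the explicit face and degeneracy formulas proved in the subsection on stratifications are then at my disposal.

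Next, I would construct a map of complexes $\Phi^\bullet \colon C^\bullet(\mathds{H}) \to \DR(\mathcal{M},\nabla_{\mathcal{M}})$ by the rule that on a monomial $m \cdot \underline{Y}_1^{[\alpha_1]}\cdots\underline{Y}_n^{[\alpha_n]}$ the image is zero unless each $\alpha_j$ is a standard basis vector $e_{i_j} \in \N^d$, in which case the image is $m \otimes dX_{i_1}\wedge\cdots\wedge dX_{i_n}$ with the alternating sign over $S_n$. The key calculation is to check that the factors $p^{|\alpha|}$ appearing in the face-map formula $p^0_0(f) = \sum_{\alpha} p^{|\alpha|}\partial_\alpha(f)\underline{Y}_1^{[\alpha]}$ conspire with this selection rule so that $\Phi^\bullet$ intertwines the \v{C}ech--Alexander differential with the de Rham differential $\nabla_{\mathcal{M}} = \sum_i p\partial_i\, dX_i$. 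Naturality in $\mathds{H}$ and functoriality in $\phi_R$ will be automatic from the construction, which then produces the desired map $\rho_{R,\phi_R}$.

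To prove that $\Phi^\bullet$ is a quasi-isomorphism, I plan to filter $C^\bullet(\mathds{H})$ by the total PD-degree in the $\underline{Y}$-variables and argue on the associated graded. On each graded piece, $\Phi^\bullet$ becomes a split surjection onto the appropriate summand of the de Rham complex, and the kernel splits as a direct sum of Koszul-type subcomplexes on a PD-polynomial ring; these are acyclic by the classical divided-power Poincar\'e lemma. Consequently the associated graded of $\Phi^\bullet$, and hence $\Phi^\bullet$ itself after passing to the $p$-adic limit, is a quasi-isomorphism.

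The hard part will be making the Poincar\'e-lemma step rigorous $p$-adically: the contracting homotopies in the divided-power Poincar\'e lemma are given by formulas of the type $\underline{Y}^{[\alpha]}\mapsto \underline{Y}^{[\alpha - e_k]}$, and one must track their interaction with the topologically nilpotent $p$-connection $\nabla_{\mathcal{M}}$ and with the factors $p^{|\alpha|}$ from the face maps, so that convergence persists through the $p$-adic completion. Once this verification is carried out, the explicit formula for $\Phi^\bullet$ immediately yields the explicit quasi-isomorphism $\rho_{R,\phi_R}$ and makes its dependence on the Frobenius lifting $\phi_R$ transparent.
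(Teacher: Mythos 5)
Your route is genuinely different from the paper's. The paper (following Tian and Wang) never writes a direct map from the \v{C}ech--Alexander complex to the de Rham complex: it forms the bicomplex $\mathds{H}(D^r)\otimes_{D^r}\hat{\Omega}^s_{D^r}$ with the \v{C}ech differential $\dif_1$ and the twisted de Rham differential $\dif_2$, quotes Wang's Lemmas 4.13 and 4.15 to see that both edge inclusions $(\mathds{H}(D^\bullet),\dif_1^{\bullet,0})\to\mathrm{Tot}_{\mathds{H}}$ and $\DR(\mathcal{M},\nabla_{\mathcal{M}})\to\mathrm{Tot}_{\mathds{H}}$ are quasi-isomorphisms, and then identifies $\mathds{H}(D^\bullet)$ with $R\Gamma\left((\X/A)_{\Prism},\mathds{H}\right)$ via Tian's Proposition 3.10; so the comparison is a zig-zag through the totalization rather than a single arrow. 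Your direct map $\Phi^\bullet$ is viable, and for exactly the reason you flag: in the $p_0$-face the factors $p^{|\alpha|}\partial_\alpha(f)$ combine with the stratification $\ep(m)=\sum_\beta\theta_\beta(m)\underline{Y}_1^{[\beta]}$ so that the selection rule produces $f\theta_{e_i}(m)+p\partial_i(f)m=\theta_{e_i}(fm)$, i.e.\ the Leibniz rule of the $p$-connection; this is the computation the paper's bicomplex formalism avoids. Two points of care in your quasi-isomorphism step: what the associated graded really requires is the Koszul-dual statement that the cosimplicial module $n\mapsto\Gamma^k_D\bigl(\bigoplus_{j=1}^nD^{\oplus d}\bigr)$ has cohomology $\wedge^kD^{\oplus d}$ concentrated in degree $k$, a cousin of (not literally) the PD Poincar\'e lemma; and the $Y$-degree filtration on the $p$-completed PD rings is separated but not complete, so the convergence issue you anticipate is real and is best handled by arguing modulo $p^m$ (where topological nilpotence of $\nabla_{\mathcal{M}}$ and the factors $p^{|\alpha|}$ truncate all series) and passing to the limit. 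What your approach buys is one explicit arrow $\rho_{R,\phi_R}$ on the nose, making the dependence on $\phi_R$ transparent; what the paper's buys is that all acyclicity inputs are quoted soft results with no filtration or convergence bookkeeping.
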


\begin{rmk}
    This theorem has already been proved by Ogus in \cite[Theorem 6.3.5]{ogus2023crystalline}. We give a proof following Tian's method (see \cite{Tian_2023} or \cite[Subsection 4.3]{wang2024prismatic}). This method is more elementary and explicit, and it allows us to understand the behavior of $\rho_{R,\phi_R}$ when we change $\phi_R$.
\end{rmk}

We construct an auxiliary complex and show that it is explicitly quasi-isomorphic to both $R\Gamma((\X/A)_{\Prism},\mathbb H)$ and $\DR(\mathcal M,\nabla_{\mathcal M})$.

Let
$$\widehat{\Omega}_{P^m/P}^1=\bigoplus_{1\leq i\leq m,\ 1\leq j\leq d}P^m\dif Y_{i,j}$$
be the complete module of PD-differentials. Hence there is a differential operator
$$\dif:P^m\to\widehat{\Omega}_{P^m/P}^1$$
which is $P$-linear.
Let
$$\widehat{\Omega}_{P^m}^1=(P^m\otimes_{R}\widehat{\Omega}_{R/A}^1\{-1\})\oplus \widehat{\Omega}_{P^m/P}^1.$$
It is free with basis
$$p^{-1}\dif X_1,p^{-1}\dif X_2,\dots,p^{-1}\dif X_d, \dif Y_{1,1},\dif Y_{1,2},\dots,\dif Y_{m,d}.$$
Moreover, for any order-preserving map $f:[m_1]\to [m_2]$, the cosimplicial structure induces a map $P^{m_1}\to P^{m_2}$, and functoriality of differential modules gives a canonical map
$$f_*:\widehat{\Omega}^1_{P^{m_1}}\to \widehat{\Omega}^1_{P^{m_2}}.$$
This makes $\widehat{\Omega}^1_{P^{\bullet}}$ a cosimplicial $P^\bullet$-module. For any $j>0$, let $\widehat{\Omega}^j_{P^{\bullet}}$ be its $j$-fold wedge product.
We may write
$$\widehat{\Omega}^j_{P^{m}}=\bigoplus_{k=0}^j\widehat{\Omega}^k_{P}\{-k\}\otimes_{P}\widehat{\Omega}^{j-k}_{P^{m}/P}.$$
For any $\omega_k\in\widehat{\Omega}^k_{P}\{-k\}$ and $\eta_{j-k}\in \widehat{\Omega}^{j-k}_{P^{m}/P}$, put
$$\dif_P(\omega_k\otimes \eta_{j-k})=\dif(\omega_k)\otimes \eta_{j-k}+(-1)^{k}\omega_{k}\otimes \dif(\eta_{j-k}),$$
where $\dif:\widehat{\Omega}^k_{P}\{-k\}\to \widehat{\Omega}^{k+1}_{P}\{-k-1\}$ is the twisted differential introduced in Corollary \ref{higherpdif}.

Now suppose $\mathbb{H}\in\mathbf{Crys}_{(B/A)_{\Prism}}$. Let $q_0:P=P^0\to P^n$ be the map induced by the inclusion $\{0\}\subset\{0,1,2,\dots,n\}$. Let $(\mathcal{M},\nabla)$ be the corresponding $p$-connection and $(\mathcal{M},\ep)$ be the induced stratification. We have a canonical isomorphism
$$\mathcal{M}\otimes_{P,q_0}P^\bullet\cong\mathbb{H}(P^\bullet).$$

For any $r,s$, we define
$$\dif_1^{r,s}:=\sum_{i=0}^{r+1}(-1)^ip_i^{r}:\mathbb{H}(P^r)\otimes_{P^r}\widehat{\Omega}_{P^r}^s\to \mathbb{H}(P^{r+1})\otimes_{P^{r+1}}\widehat{\Omega}_{P^{r+1}}^s$$
where $p_i^r$ acts as the tensor product of the coface maps $p_i^r$ on both factors. This is the horizontal differential in the totalization of the cosimplicial module $\mathbb{H}(P^\bullet)\otimes_{P^\bullet}\widehat{\Omega}_{P^\bullet}^s$. We also define
$$\dif_2^{r,s}(x\otimes\omega)=\nabla(x)\wedge \omega+x\otimes \dif_P(\omega).$$

\begin{theo}\label{localbicomplex}
    For any $s\geq 0$, $\dif_2^{\bullet,s}$ defines a morphism of cosimplicial modules. Hence $(\mathbb{H}(P^r)\otimes_{P^r}\widehat{\Omega}_{P^r}^s,\dif_1^{r,s},\dif_2^{r,s})$ defines a bicomplex.
\end{theo}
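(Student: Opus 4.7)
The claim packages three assertions: (a) $\dif_2^{r,s}$ is well-defined on the tensor product over $D^r$; (b) $\dif_2^{r,s+1}\circ\dif_2^{r,s}=0$; (c) $\dif_2^{\bullet,s}$ commutes with every face and degeneracy map of the cosimplicial ring $D^\bullet$. Once (c) is in hand, the bicomplex condition $\dif_1\,\dif_2 = \pm\,\dif_2\,\dif_1$ is immediate from the definition $\dif_1^{r,s}=\sum_i(-1)^ip_i^r$; combined with (a) and (b), this gives the bicomplex.

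For (a), I would check the relation $\dif_2^{r,s}(fx\otimes\omega)=\dif_2^{r,s}(x\otimes f\omega)$ for $f\in D^r$. Expanding the left-hand side via the $p$-connection Leibniz rule $\nabla(fx)=f\nabla(x)+p\dif(f)\cdot x$ (which holds for the natural extension of $\nabla$ to $\mathds{H}(D^r)=\mathcal{M}\otimes_{D,q_0}D^r$ using $p\dif$ on $D^r$) and the right-hand side via the graded Leibniz rule for $\dif_D$ on $\hat{\Omega}^\bullet_{D^r}$, the difference is $x\otimes(p\dif(f)\wedge\omega)-x\otimes(p\dif(f)\wedge\omega)=0$.

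For (b), write $\nabla(x)=\sum_i x_i\otimes\eta_i$ with $\eta_i\in\hat{\Omega}^1_{D^r}$ and compute
\begin{align*}
\dif_2^2(x\otimes\omega) &= \sum_i\nabla(x_i)\wedge\eta_i\wedge\omega+\sum_ix_i\otimes\dif_D(\eta_i)\wedge\omega\\
&\quad -\nabla(x)\wedge\dif_D(\omega)+\nabla(x)\wedge\dif_D(\omega)+x\otimes\dif_D^2(\omega).
\end{align*}
The third and fourth terms cancel; the sum of the first two is $\nabla^{(2)}(x)\wedge\omega$, where $\nabla^{(2)}$ denotes the curvature, which vanishes by integrability of $\nabla$. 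Finally $\dif_D^2=0$: on $\hat{\Omega}^{j-k}_{D^m/D}$ the differential $\dif$ is the usual PD-de~Rham differential and squares to zero; on $\hat{\Omega}^k_D$ the operator $p\dif$ is the $p$-adic extension of $p$ times the usual exterior differential, so $(p\dif)^2=p^2\dif^2=0$; the two pieces combine via signs $(-1)^k$ that make the cross terms cancel.

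The main obstacle is (c), the cosimplicial compatibility. Unravelling $p_i^r\circ\dif_2^{r,s}=\dif_2^{r+1,s}\circ p_i^r$ on an element $x\otimes\omega$ reduces, after using $p_i^r(x\otimes\omega)=p_i^r(x)\otimes p_i^r(\omega)$, to the two horizontal compatibilities
\begin{equation*}
\nabla\bigl(p_i^r(x)\bigr)=p_i^r(\nabla(x)),\qquad \dif_D\bigl(p_i^r(\omega)\bigr)=p_i^r\bigl(\dif_D(\omega)\bigr).
\end{equation*}
The second is functoriality of differential modules along the cosimplicial ring maps $D^r\to D^{r+1}$, applied termwise to the decomposition $\hat{\Omega}^j_{D^r}=\bigoplus_k\hat{\Omega}^k_D\otimes_D\hat{\Omega}^{j-k}_{D^r/D}$ (the $\hat{\Omega}^\bullet_D$ factor is untouched by $p_i^r$, and the face map on $\hat{\Omega}^\bullet_{D^r/D}$ commutes with $\dif$). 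The first is precisely the crystal condition: the map $p_i^r:D^r\to D^{r+1}$ is a morphism in $(B/A)_\Prism$, so the induced isomorphism $\mathds{H}(D^r)\otimes_{D^r,p_i^r}D^{r+1}\cong\mathds{H}(D^{r+1})$ identifies the $p$-connections on both sides, which is the infinitesimal shadow of the cocycle condition in Proposition~\ref{equiv theta}. Once this horizontal compatibility is checked (and the analogous statement for degeneracies, which is easier since $\sigma_i^n$ is a retraction), (c) follows, and the bicomplex is established.
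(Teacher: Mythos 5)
Your reduction to the checks (a)--(c) is reasonable and (b) is essentially fine, but the justification you give for (c) --- which is the only substantive point --- does not work. All of the content of the statement sits in the zeroth face map. For $i>0$ the maps $p_i^r$ fix $D$ and send the $\underline{Y}_j$'s to $\underline{Y}$'s with constant coefficients, so compatibility there is indeed formal; but $p_0^r$ acts on $f\in D$ by the Taylor formula $\sum_{\alpha}p^{|\alpha|}\partial_\alpha(f)\underline{Y}_1^{[\alpha]}$, so it does \emph{not} leave the $\hat{\Omega}^\bullet_D$ factor untouched and does not respect the decomposition $\hat{\Omega}^j_{D^r}=\bigoplus_k\hat{\Omega}^k_D\otimes_D\hat{\Omega}^{j-k}_{D^r/D}$. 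Because $\dif_D$ is a mixed differential ($p\dif$ in the $X$-directions, the ordinary PD differential in the $Y$-directions), the identity $\dif_D\circ p_0^r=p_{0,*}\circ\dif_D$ is a genuine factor-of-$p$ computation rather than an instance of functoriality of differential modules: already on functions it forces the induced map on forms to send $\dif X_j\mapsto \dif X_j+\dif Y_{1,j}$ (i.e.\ functoriality with respect to $\dif_D$), not the naive $\dif X_j\mapsto \dif X_j+p\,\dif Y_{1,j}$ that ``termwise'' functoriality of differentials would give. Likewise, ``$\nabla(p_0(x))=p_0(\nabla(x))$ is precisely the crystal condition'' is not a proof: since $p_0\circ q_0=q_1$, the action of $p_0$ on $\mathds{H}(D^r)\cong\mathcal{M}\otimes_{D,q_0}D^r$ goes through the stratification (for $r=0$ it is $m\mapsto\sum_\alpha\theta_\alpha(m)\underline{Y}_1^{[\alpha]}$), and verifying that $\dif_2$ commutes with it is exactly the computation in which $\theta_{\gamma+e_i}=\theta_{e_i}\circ\theta_\gamma$, the commutativity of the $\theta_{e_i}$, the Leibniz rule $\theta_{e_i}(fm)=f\theta_{e_i}(m)+p\partial_i(f)m$ of Proposition \ref{equiv theta}, the coefficients $p^{|\alpha|}\partial_\alpha$ in $p_0$, and the unscaled $\dif Y$'s in $\dif_D$ must cancel against one another. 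That computation \emph{is} the lemma; the paper itself simply outsources it to the proof of Lemma 4.13 of \cite{wang2024prismatic}, so your sketch, which treats precisely this step as formal, does not substitute for it.

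There is also an internal inconsistency in your step (a): you extend $\nabla$ to $\mathds{H}(D^r)$ by a Leibniz rule with respect to $p\dif$ on $D^r$ while expanding the form side with $\dif_D$, but $\dif_D(f)\neq p\dif(f)$ as soon as $f$ involves the $\underline{Y}$'s, so the two cross terms you cancel are not literally equal. The extension of $\nabla$ must satisfy the Leibniz rule with respect to the same mixed differential $\dif_D$; with that correction (a) goes through, but the $i=0$ compatibility above still has to be computed explicitly before the bicomplex structure is established.
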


\begin{proof}
    We follow the explicit calculation of \cite[Lemma 4.14]{wang2024prismatic}; the case of the structure crystal is treated in \cite[Lemma 4.13]{wang2024prismatic}. 
    Write
    $$e_i:=p^{-1}\dif X_i,\qquad \eta_{j,i}:=\dif Y_{j,i}.$$
    Then $\widehat\Omega^1_{P^r}$ is freely generated over $P^r$ by the $e_i$ and the $\eta_{j,i}$ for $1\leq j\leq r$. If $p_a^r:P^r\to P^{r+1}$ is a coface map with $a>0$, then it preserves the zeroth vertex. Hence it does not change the chosen trivialization of the crystal, and for $m\in \mathcal M$ one has
    $$p_a^r(m)=m,\qquad p_a^r(e_i)=e_i.$$
    The variables $Y_{j,i}$ are sent to the corresponding variables described by the \v{C}ech formulas above. Therefore the equality
    $$p_a^r\circ \dif_2^{r,s}=\dif_2^{r+1,s}\circ p_a^r$$
    follows from the ordinary functoriality of PD-differentials and from the Leibniz rule.

    The only nontrivial coface is $p_0^r$; more generally, the same calculation treats any order-preserving map $f:[r]\to[r']$ with $f(0)\ne0$. Under the identification
    $$\mathbb H(P^r)\cong P^r\otimes_{q_0,P}\mathcal M,$$
    the map $p_0^r$ is obtained by changing the base vertex from $0$ to $1$, and therefore acts through the stratification. If
    $$\epsilon(m)=\sum_{\alpha\in\mathbb N^d}\theta_\alpha(m)\underline Y_1^{[\alpha]},$$
    then
    $$p_0^r(m)=\sum_{\alpha\in\mathbb N^d}\theta_\alpha(m)\underline Y_1^{[\alpha]}.$$
    Moreover,
    $$p_0^r(e_i)=e_i+\eta_{1,i},\qquad
    p_0^r(Y_{j,i})=Y_{j+1,i}-Y_{1,i}.$$
    Equivalently, for a general order-preserving map $f$ with $f(0)\ne0$, the stratification gives
    $$f(m)=\sum_{\alpha\in\mathbb N^d}\theta_\alpha(m)\underline Y_{f(0)}^{[\alpha]},
    \qquad
    f(e_i)=e_i+\dif Y_{f(0),i}.$$
    It suffices to check the equality on $m\in\mathcal M$, because both sides satisfy the same graded Leibniz rule with respect to $P^r$ and the exterior algebra. Since the $p$-connection is given by
    $$\nabla(m)=\sum_{i=1}^d\theta_{e_i}(m)e_i,$$
    Proposition \ref{equiv theta} gives $\theta_\alpha=\theta_{e_1}^{\alpha_1}\cdots\theta_{e_d}^{\alpha_d}$ and the endomorphisms $\theta_{e_i}$ commute. Thus
    \begin{align*}
    \dif_2^{r+1,0}\bigl(p_0^r(m)\bigr)
    &=\dif_2^{r+1,0}\left(\sum_{\alpha}\theta_\alpha(m)\underline Y_1^{[\alpha]}\right)\\
    &=\sum_{\alpha,i}\theta_{e_i}\theta_\alpha(m)\underline Y_1^{[\alpha]}e_i
      +\sum_{\alpha,i}\theta_\alpha(m)\underline Y_1^{[\alpha-e_i]}\eta_{1,i}\\
    &=\sum_{\alpha,i}\theta_{\alpha+e_i}(m)\underline Y_1^{[\alpha]}e_i
      +\sum_{\alpha,i}\theta_{\alpha+e_i}(m)\underline Y_1^{[\alpha]}\eta_{1,i}\\
    &=\sum_i\left(\sum_{\alpha}\theta_\alpha(\theta_{e_i}(m))\underline Y_1^{[\alpha]}\right)(e_i+\eta_{1,i})\\
    &=p_0^r\left(\sum_i\theta_{e_i}(m)e_i\right)
    =p_0^r\bigl(\dif_2^{r,0}(m)\bigr).
    \end{align*}
    Here $\underline Y_1^{[\alpha-e_i]}$ is interpreted as $0$ if $\alpha_i=0$, and in the third line we reindex $\alpha$ by $\alpha+e_i$. This is exactly the cancellation in the Taylor expansion: the terms coming from differentiating the divided-power variables $\underline Y_1^{[\alpha]}$ are absorbed by the extra summand $\eta_{1,i}$ in $p_0^r(e_i)$.

    The same computation after wedging with an arbitrary form $\omega$ proves
    $$p_0^r\circ \dif_2^{r,s}=\dif_2^{r+1,s}\circ p_0^r$$
    for all $s$. Therefore $\dif_2^{\bullet,s}$ is a morphism of cosimplicial modules. Finally, $\dif_1^2=0$ is the usual cosimplicial identity, $\dif_2^2=0$ follows from the integrability of the $p$-connection and from $\dif_P^2=0$, and the equality just proved gives $\dif_1\dif_2+\dif_2\dif_1=0$ with the standard total-complex sign convention.
\end{proof}

We define the complex $\mathrm{Tot_{\mathbb{H}}}$ to be the totalization of $(\mathbb{H}(P^r)\otimes_{P^r}\widehat{\Omega}_{P^r}^s,\dif_1^{r,s},\dif_2^{r,s})$. We will prove that both $\mathbb{H}(P^\bullet)$ and $\DR(\mathcal{M},p^{-1}\nabla)$ are isomorphic to $\mathrm{Tot_{\mathbb{H}}}$. In fact, by our definition, we have
$$(\mathbb{H}(P^\bullet),\dif_1^{\bullet,0})$$
is the totalization of the cosimplicial module \(\mathbb{H}(P^\bullet)\) and
$$(\mathbb{H}(P^0)\otimes_{P^0}\widehat{\Omega}_{R^0}^\bullet,\dif_2^{0,\bullet})$$
is equal to $\DR(\mathcal{M},p^{-1}\nabla)$. Thus, the inclusions provide two morphisms of complexes
$$(\mathbb{H}(P^\bullet),\dif_1^{\bullet,0})\to \mathrm{Tot}_{\mathbb{H}}$$
and
$$\DR(\mathcal{M},p^{-1}\nabla)\to \mathrm{Tot}_{\mathbb{H}}.$$

\begin{theo}\label{localtotqi}
    The above two morphisms are indeed quasi-isomorphisms.
\end{theo}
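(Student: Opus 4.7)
The plan is a double complex / spectral sequence argument. The bicomplex $(C^{r,s}=\mathds{H}(D^r)\otimes_{D^r}\hat{\Omega}^s_{D^r},\dif_1,\dif_2)$ has two edges $C^{\bullet,0}=\mathds{H}(D^\bullet)$ and $C^{0,\bullet}=\DR(\mathcal{M},\nabla)$, and the two inclusions into $\mathrm{Tot}_{\mathds{H}}$ are precisely the edge-inclusions. Hence it suffices to show each edge-inclusion is a quasi-isomorphism by proving the degeneracy of the appropriate spectral sequence of the bicomplex, each of which reduces to showing that the other cosimplicial or de Rham direction is a resolution of its degree-$0$ entry.

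For the inclusion $\DR(\mathcal{M},\nabla)\hookrightarrow \mathrm{Tot}_{\mathds{H}}$, I filter by $r$ and compute $\dif_1$-cohomology first. The goal is: for each fixed $s$, the column complex $(C^{\bullet,s},\dif_1^{\bullet,s})$ is a resolution of $C^{0,s}=\mathcal{M}\otimes_D\hat{\Omega}^s_D$ concentrated in degree $0$. Using the decomposition $\hat{\Omega}^s_{D^r}=\bigoplus_{k=0}^s\hat{\Omega}^k_D\otimes_D\hat{\Omega}^{s-k}_{D^r/D}$, each summand is a cosimplicial module arising naturally from the cosimplicial structure of $D^\bullet$ combined with the crystal $\mathds{H}$; the previously proved theorem that $D$ is a covering of the final object of ${(B/A)_{\Prism}}$, together with the standard \v{C}ech-to-derived-section comparison for crystals on a site with such a covering, yields the required acyclicity. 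After degeneration, $E_2=E_\infty$ is concentrated at $r=0$ and the surviving $\dif_2$-differential identifies with the de Rham differential, producing $H^*(\mathrm{Tot}_{\mathds{H}})\cong H^*(\DR(\mathcal{M},\nabla))$.

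For the inclusion $(\mathds{H}(D^\bullet),\dif_1^{\bullet,0})\hookrightarrow \mathrm{Tot}_{\mathds{H}}$, I filter by $s$ and compute $\dif_2$-cohomology first. The goal here is: for each fixed $r$, the row complex $(C^{r,\bullet},\dif_2^{r,\bullet})$ is a resolution of $C^{r,0}=\mathds{H}(D^r)$ concentrated in degree $0$. This is a Poincar\'e-lemma statement for the completed PD-polynomial ring $D^r=D[Y_{i,j}]^\wedge_{\mathrm{pd}}$ equipped with the twisted de Rham-type differential $\dif_2=\nabla+\dif_D$. I would construct an explicit contracting homotopy by combining the ``augmentation at zero'' projection $Y_{i,j}\mapsto 0$ on $\hat{\Omega}^\bullet_{D^r/D}$ (which provides a relative PD-Poincar\'e contraction) with the exponential stratification formula $\ep=\prod\exp(\theta_{e_i}Y_{1,i})$ obtained in the earlier classification of crystals, the latter being used to twist the naive homotopy so as to commute with the $\nabla$-contribution to $\dif_2$. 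Once this row-acyclicity is established, $E_1$ is concentrated in the column $s=0$ with entries $\mathds{H}(D^r)$, and the surviving $\dif_1$-differential coincides with the \v{C}ech differential, giving $H^*(\mathrm{Tot}_{\mathds{H}})\cong H^*(\mathds{H}(D^\bullet),\dif_1^{\bullet,0})$.

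The main obstacle is the Poincar\'e lemma in the second step. The differential $\dif_2$ genuinely mixes three operators --- the $p$-connection $\nabla$ on the crystal $\mathds{H}(D^r)$, the absolute $p$-differential $p\dif$ on $\hat{\Omega}^\bullet_D$, and the ordinary PD-differential $\dif$ on $\hat{\Omega}^\bullet_{D^r/D}$ --- so the contracting homotopy must intertwine all three and be compatible with the $p$-adic completion. This is analogous to the technique Tian develops in \cite{Tian_2023} in the characteristic-$p$ crystalline setting and Wang adapts in \cite{wang2024prismatic} for the prismatic $p$-connection setting; the adaptation here essentially requires tracking how the exponential stratification formula produced by Proposition~\ref{equiv theta} behaves under the full differential $\dif_2$.
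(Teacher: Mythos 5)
Your overall double-complex strategy is in the right spirit (it is the spirit of the argument of Tian that Wang's Lemma 4.15, which the paper simply cites, adapts), but both of your key intermediate claims are false, and they are attached to the wrong halves of the statement. Take your step 2 first: you claim that for each fixed $r$ the row $(C^{r,\bullet},\dif_2^{r,\bullet})$ is a resolution of $C^{r,0}=\mathds{H}(D^r)$ concentrated in degree $0$. At $r=0$ this already says that $\DR(\mathcal{M},\nabla)$ has no cohomology outside degree $0$, which is false in general and inconsistent with the very theorem being proved; note also that $H^0$ of the row is $\ker(\dif_2^{r,0})$, not $\mathds{H}(D^r)$, since $\dif_2^{r,0}$ is a ($p$-)connection and not zero. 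The true statement about rows is different: writing $\hat{\Omega}^j_{D^r}=\bigoplus_k\hat{\Omega}^k_D\otimes_D\hat{\Omega}^{j-k}_{D^r/D}$ and taking cohomology in the $\hat{\Omega}^\bullet_{D^r/D}$-direction first, the PD-Poincar\'e lemma with topologically nilpotent coefficients for $D^r=D[\underline{Y}]^\land_{\mathrm{pd}}$ shows that each row is quasi-isomorphic, via the coaugmentation $q_0$, to $\DR(\mathcal{M},\nabla)$ itself (not to $\mathds{H}(D^r)[0]$). It is this row-wise statement, together with the observation that all cosimplicial face maps then induce the identity on the resulting $E_1$-page (so the alternating-sum differential has cohomology concentrated in cosimplicial degree $0$), that proves the comparison with $\DR(\mathcal{M},\nabla)$ --- i.e.\ the quasi-isomorphism you tried to get from step 1.

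Your step 1 has the symmetric defect: you claim each column $(C^{\bullet,s},\dif_1^{\bullet,s})$ is a resolution of $C^{0,s}$ concentrated in degree $0$, justified by the covering-of-final-object theorem and the \v{C}ech-to-derived comparison for crystals. This does not apply: for $s\geq 1$ the cosimplicial module $\mathds{H}(D^\bullet)\otimes_{D^\bullet}\hat{\Omega}^s_{D^\bullet}$ is not the \v{C}ech--Alexander complex of any crystal, because $\hat{\Omega}^s_{D^r}$ contains the classes $\dif Y_{i,j}$ and is not the base change of $\hat{\Omega}^s_D$; moreover even for an honest crystal that complex computes $R\Gamma\left((B/A)_{\Prism},-\right)$, which is not concentrated in degree $0$, and in any case $\dif_1^{0,s}$ is injective rather than zero, so $H^0$ of the column is not $C^{0,s}$. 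What the argument actually needs is that for every $s\geq 1$ the column is acyclic in \emph{all} degrees, which is proved by an explicit contracting homotopy built from the divided powers $\underline{Y}^{[\alpha]}$ and the degeneracy maps; this yields the quasi-isomorphism of $\mathrm{Tot}_{\mathds{H}}$ with $(\mathds{H}(D^\bullet),\dif_1^{\bullet,0})$, i.e.\ the half you tried to get from step 2. So the repair is not a matter of adding detail: you must swap the roles of the two filtrations and replace both ``edge-resolution'' claims by the two correct inputs (column acyclicity for $s\geq1$; rows quasi-isomorphic to $\DR(\mathcal{M},\nabla)$ via the nilpotent PD-Poincar\'e lemma plus the identity-on-$E_1$ argument), which is exactly what the proof of Lemma 4.15 of Wang that the paper invokes does.
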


\begin{proof}
    We give the \v{C}ech calculation explicitly. Put
    $$K^{r,s}:=\mathbb H(P^r)\otimes_{P^r}\widehat\Omega^s_{P^r}.$$
    The first assertion is equivalent to saying that the quotient bicomplex
    $$K^{\bullet,>0}:=\bigoplus_{s>0}K^{\bullet,s}[-s]$$
    is acyclic for the horizontal \v{C}ech differential. It is enough to check this after choosing the trivialization
    $$\mathbb H(P^r)\cong P^r\otimes_P\mathcal M,$$
    because $\mathcal M$ is finite projective and the transition maps are given by the stratification.

    We first treat $s=1$. The \v{C}ech complex of the degree-one differential modules is generated by the elements
    $$e_i=p^{-1}\dif X_i,\qquad \eta_{j,i}=\dif Y_{j,i}.$$
    The key identity is already visible in degree one:
    $$p_0^0(e_i)-p_1^0(e_i)=(e_i+\eta_{1,i})-e_i=\eta_{1,i}.$$
    Hence the relative differential $\eta_{1,i}$ in degree $1$ is a \v{C}ech boundary. More generally,
    $$\eta_{j,i}=q_j(e_i)-q_0(e_i),$$
    where $q_j:P\to P^r$ denotes the map corresponding to the $j$-th vertex. This is the same computation as in the standard infinitesimal \v{C}ech complex of differentials (cf. \cite[Example 2.16]{bhatt2011crystalline}). It gives an explicit contracting homotopy on the normalized \v{C}ech complex of $\widehat\Omega^1_{P^\bullet}$: on a normalized generator represented by $\eta_{j,i}$, the homotopy sends it to an alternating sum of the vertex differentials $q_\ell(e_i)$ whose \v{C}ech boundary is $\eta_{j,i}$. Thus the row
    $$K^{\bullet,1}:\quad
    K^{0,1}\to K^{1,1}\to K^{2,1}\to \cdots$$
    is null-homotopic.

    More explicitly, \cite[Example 2.16]{bhatt2011crystalline} applies to the cosimplicial module
    $$N_i^r:=\bigoplus_{0\leq j\leq r}P^r e_{j,i},
    \qquad f(e_{j,i})=e_{f(j),i},$$
    and $\widehat\Omega^1_{P^r}$ is the quotient of the direct sum of these $N_i^r$ by the constant summands generated by the diagonal differentials. Since Example 2.16 contracts $N_i^\bullet$, the quotient by the constant part contracts the relative generators $\eta_{j,i}=e_{j,i}-e_{0,i}$. This is the precise degree-one null-homotopy used above.

    For higher differential degree, we use that $\widehat\Omega^s_{P^r}$ is the derived exterior power $\wedge^s\widehat\Omega^1_{P^r}$ and that the modules involved are finite projective over the corresponding $P^r$. The contracting homotopy in degree one is compatible with the cosimplicial algebra structure and therefore extends, by the usual derivation rule, to the exterior algebra. This is the same passage from degree one to all degrees as in \cite[Lemma 4.15]{Tian_2023}. Consequently the \v{C}ech complex $K^{\bullet,s}$ is null-homotopic for every $s>0$. This proves that
    $$(\mathbb H(P^\bullet),\dif_1^{\bullet,0})\longrightarrow \mathrm{Tot}_{\mathbb H}$$
    is a quasi-isomorphism.

    It remains to compare the total complex with the zeroth column. We use the filtration by the horizontal \v{C}ech degree. The associated graded pieces are the de Rham complexes of the formal PD-polynomial algebras
    $$P^r\simeq P[\underline Y_1,\dots,\underline Y_r]_{\mathrm{pd}}^\wedge$$
    with coefficients in the pullback of $(\mathcal M,p^{-1}\nabla)$. The relative part in the variables $\underline Y_j$ is the standard PD-de Rham complex
    $$P[\underline Y_1,\dots,\underline Y_r]_{\mathrm{pd}}^\wedge
    \to
    \bigoplus_{j,i}P[\underline Y_1,\dots,\underline Y_r]_{\mathrm{pd}}^\wedge\dif Y_{j,i}\to\cdots,$$
    which is contractible by the usual PD Poincar\'e homotopy. In one variable the relative PD-de Rham complex is
    $$P\langle Y\rangle_{\mathrm{pd}}^\wedge
    \xrightarrow{\dif}
    P\langle Y\rangle_{\mathrm{pd}}^\wedge \dif Y,$$
    and the contracting homotopy of the positive-degree part is given explicitly by
    $$h\left(Y^{[n]}\dif Y\right)=Y^{[n+1]},\qquad h\left(Y^{[n]}\right)=0.$$
    Indeed, $\dif(Y^{[n+1]})=Y^{[n]}\dif Y$. For several variables the relative PD-de Rham complex is the completed tensor product of these one-variable complexes; equivalently, one applies the above homotopy successively to the first differential variable appearing in a monomial
    $$\underline Y^{[\alpha]}\dif Y_{j_1,i_1}\wedge\cdots\wedge \dif Y_{j_t,i_t}.$$
    This gives a homotopy between the identity and the projection to the constant term. This is the standard calculation used in \cite[Lemmas 4.13--4.15]{Tian_2023}. Hence the projection $P^r\to P$ induces a quasi-isomorphism from the column in \v{C}ech degree $r$ to the de Rham complex over $P$.

    The compatibility of these homotopies with the coface maps identifies the resulting horizontal complex with the constant \v{C}ech complex attached to $\DR(\mathcal M,p^{-1}\nabla)$; this constant \v{C}ech complex is contracted by the extra degeneracy inserting the zeroth vertex. Therefore the inclusion
    $$\DR(\mathcal M,p^{-1}\nabla)\longrightarrow \mathrm{Tot}_{\mathbb H}$$
    is also a quasi-isomorphism. Using the identification of $\DR(\mathcal M,p^{-1}\nabla)$ with $\DR(\mathcal M,\nabla)$ explained above gives the desired statement.
\end{proof}

Finally, it remains to compare $\mathbb{H}(P^\bullet)$ and $R\Gamma\left((\X/A)_{\Prism},\mathbb{H}\right)$. This follows from \cite[Proposition 3.10]{Tian_2023}.

\subsection{Lifting of the Ogus--Vologodsky correspondence}\label{OV}

We regard $pA$ as a PD-ideal with its canonical divided-power structure. The local lifted Ogus--Vologodsky functor is first defined by the usual formula on connections. We then identify this definition with a purely stratification-theoretic construction; the latter is the form used in the descent argument proving the equivalence.

\begin{theo}\label{equiv}
    Let $X$ be a scheme over $A/p$ which is locally isomorphic to the quotient of a smooth scheme by an ideal generated by a Koszul-regular sequence. Then there is a canonical equivalence of categories
    $$\mathbf{Crys}_{(X^{(1)}/A)_{\Prism}}\cong \mathbf{Crys}_{(X/A)_{\mathrm{crys}}}.$$
    Here $X^{(1)}$ denotes the Frobenius twist of $X$ over $A/p$, and $(X/A)_{\mathrm{crys}}$ denotes the crystalline site of $X$ over $A$.
\end{theo}

Before constructing the functor, we record that the same local complete intersection hypothesis is preserved by Frobenius twist.

\begin{lemma}
    Let $A$ be a ring of characteristic $p$, let $R$ be a smooth $A$-algebra, and let $f_1,\ldots,f_r\in R$ be a Koszul-regular sequence. Put
    $$R^{(1)}:=A\otimes_{\phi_A,A}R,\qquad f_i^{(1)}:=1\otimes f_i.$$
    Then $R^{(1)}$ is smooth over $A$, and $f_1^{(1)},\ldots,f_r^{(1)}$ is a Koszul-regular sequence in $R^{(1)}$.
\end{lemma}

\begin{proof}
    Smoothness is stable under base change. The relative Frobenius
    $$\phi_{R/A}:R^{(1)}\to R,\qquad a\otimes x\mapsto ax^p,$$
    is faithfully flat because $R$ is smooth over $A$. Since
    $$\phi_{R/A}(f_i^{(1)})=f_i^p,$$
    and powers of a Koszul-regular sequence are Koszul-regular, the Koszul complex of $f_1^{(1)},\ldots,f_r^{(1)}$ becomes acyclic in positive degrees after the faithfully flat base change $\phi_{R/A}$. Hence, it is already acyclic in positive degrees over $R^{(1)}$.
\end{proof}

First we assume that $X=\operatorname{Spec}(B)$ is affine. We use the following category of presentations. An object of $\mathcal P_B$ is a surjection
$$T=(S\twoheadrightarrow B),$$
where $S$ is a $p$-complete smooth $\delta$-$A$-algebra. A morphism in $\mathcal P_B$ is a $\delta$-$A$-homomorphism over $B$. For such a presentation, set
$$S^{(1)}:=A\widehat\otimes_{\phi_A,A}S,\qquad B^{(1)}:=A/p\otimes_{\phi_{A/p},A/p}B,$$
and write
$$P_T:=\Prism_{B^{(1)}/S^{(1)}},\qquad D_T:=D_S(B),$$
where $P_T$ is the prismatic envelope of $B^{(1)}$ in $S^{(1)}$, and $D_T$ is the $p$-complete PD-envelope of $B$ in $S$.
By the universal property of prismatic envelopes, we have
\[P_T=A\widehat{\otimes}_{\phi_A,A}\Prism_{B/S}.\]
We denote by
$$\Phi_T:P_T\longrightarrow D_T$$
the composition of $$P_T=A\widehat{\otimes}_{\phi_A,A}\Prism_{B/S}\to S\widehat{\otimes}_{\phi_S,S}\Prism_{B/S}$$ and the canonical isomorphism $S\widehat{\otimes}_{\phi_S,S}\Prism_{B/S}\cong D_{S}(B)$ constructed in Corollary \ref{cor: crys comp}. This construction is functorial in $T$.

\begin{lemma}\label{lem:presentation-frobenius-map}
    For every $T=(S\twoheadrightarrow B)$ in $\mathcal P_B$, the canonical homomorphism
    $$\Phi_T:P_T\longrightarrow D_T$$
    is finite faithfully flat.
\end{lemma}

\begin{proof}
      The assertion is local on $S$. We may therefore choose elements $f_1,\ldots,f_r\in I_T:=\ker (S\to B)$ whose images form a Koszul-regular sequence in $S/p$ and generate $I_T/pS$. Let $A\{x_1,\ldots,x_r\}$ be the $\delta$-polynomial algebra over $A$, and define two $\delta$-$A$-homomorphisms
    $$o_1:A\{\underline{x}\}\to S^{(1)},\qquad x_i\mapsto 1\otimes f_i,$$
    $$o_2:A\{\underline{x}\}\to S,\qquad x_i\mapsto \phi_S(f_i).$$
    These are compatible with the relative Frobenius
    $$\phi_{S/A}:S^{(1)}\to S,\qquad 1\otimes s\mapsto \phi_S(s),$$
    in the sense that $o_2=\phi_{S/A}\circ o_1$. Let $A\{y_1,\ldots,y_r\}$ be the $\delta$-$A\{\underline{x}\}$-algebra defined by $x_i\mapsto py_i$. By Proposition \ref{prop:prism_close} and Corollary \ref{cor: crys comp}, we have canonical identifications
    $$P_T\cong S^{(1)}\widehat\otimes_{A\{\underline{x}\},o_1}A\{\underline{y}\},$$
    $$D_T\cong S\widehat\otimes_{A\{\underline{x}\},o_2}A\{\underline{y}\}.$$
    Since $o_2=\phi_{S/A}\circ o_1$, the second ring is also
    $$D_T\cong S\widehat\otimes_{S^{(1)},\phi_{S/A}}P_T.$$
    Under this identification, $\Phi_T$ is the completed base change of $\phi_{S/A}$, and the square
    $$\xymatrix{
    S^{(1)}\ar[r]\ar[d]_{\phi_{S/A}}&P_T\ar[d]^{\Phi_T}\\
    S\ar[r]&D_T
    }$$
    is a pushout square of $p$-complete rings.
    The claim now follows from the fact that $\phi_{S/A}$ is finite faithfully flat.
\end{proof}

For any \(T=(S\twoheadrightarrow B)\in \mathcal{P}_B\), put
$$\Omega_{P_T}:=P_T\otimes_{S^{(1)}}\widehat{\Omega}^1_{S^{(1)}/A},
\qquad
\Omega_{D_T}:=D_T\otimes_S\widehat{\Omega}^1_{S/A}.$$
The map $\Phi_T$ has a divided differential
$$\overline{\dif}\Phi_T:\Omega_{P_T}\longrightarrow \Omega_{D_T}$$
characterized by
$$\dif\Phi_T=p\,\overline{\dif}\Phi_T.$$
Indeed, for $s\in S$ one has
$$\Phi_T(1\otimes s)=\phi_S(s)=s^p+p\delta_S(s),$$
and hence
$$\dif(\Phi_T(1\otimes s))
=p\bigl(s^{p-1}\dif s+\dif(\delta_S(s))\bigr).$$
This determines $\overline{\dif}\Phi_T$ by $P_T$-linearity and $p$-adic continuity.

\begin{defi}[Local lifted inverse Cartier transform]\label{def:local-inverse-cartier}
    Let $T=(S\twoheadrightarrow B)\in \mathcal{P}_B$, and let $(M,\nabla)$ be a finite projective $P_T$-module endowed with an integrable topologically quasi-nilpotent $p$-connection
    $$\nabla:M\to M\otimes_{P_T}\Omega_{P_T}.$$
    Define
    $$C_T^{-1}(M,\nabla):=(D_T\otimes_{\Phi_T,P_T}M,\nabla_\Phi),$$
    where
    $$\nabla_\Phi(b\otimes m)
    =b\cdot(\mathrm{id}_M\otimes\overline{\dif}\Phi_T)(\nabla m)
    +(1\otimes m)\otimes \dif b,\qquad b\in D_T,\ m\in M.$$
    Equivalently, $\nabla_\Phi=\Phi_T^*(\nabla)/p$, where the division by $p$ is taken through $\overline{\dif}\Phi_T$.
\end{defi}

\begin{lemma}\label{lem:inverse-cartier-well-defined}
    The formula in Definition \ref{def:local-inverse-cartier} is well defined on $D_T\otimes_{\Phi_T,P_T}M$ and defines an integrable topologically quasi-nilpotent connection.
\end{lemma}

\begin{proof}
    It is enough to check compatibility with the tensor-product relation. Let $g\in P_T$. Since $\nabla$ is a $p$-connection,
    $$\nabla(gm)=g\nabla(m)+p\,m\otimes \dif g.$$
    Applying $\mathrm{id}_M\otimes\overline{\dif}\Phi_T$ and using $\dif\Phi_T=p\,\overline{\dif}\Phi_T$, we get
    \begin{align*}
        \nabla_\Phi(b\otimes gm)
        &=b\Phi_T(g)(\mathrm{id}_M\otimes\overline{\dif}\Phi_T)(\nabla m)
          +(1\otimes m)\otimes b\,\dif(\Phi_T(g))
          +(1\otimes m)\otimes \Phi_T(g)\dif b\\
        &=\nabla_\Phi(b\Phi_T(g)\otimes m).
    \end{align*}
    The ordinary Leibniz rule over $D_T$ follows immediately from the term $(1\otimes m)\otimes \dif b$.

    We next verify integrability directly. The connection $\nabla_\Phi$ is the sum of the standard connection on the first factor $D_T$ and the $D_T$-linear operator obtained from $\nabla$ by the $P_T$-linear map
    $$\Omega_{P_T}\xrightarrow{\overline{\dif}\Phi_T}\Omega_{D_T}.$$
    Since $\dif\circ\dif\Phi_T=0$ and $\dif\Phi_T=p\,\overline{\dif}\Phi_T$, the map $\overline{\dif}\Phi_T$ is compatible with exterior differentials. Therefore the curvature of the $D_T$-linear part is the image of the curvature of $\nabla$, which is zero. The mixed terms are exactly the exterior differential of the coefficient forms $\overline{\dif}\Phi_T(\omega)$ and cancel by the same compatibility. Hence $\nabla_\Phi^2=0$.

    Finally we check topological quasi-nilpotence using the definition in \cite[\href{https://stacks.math.columbia.edu/tag/07JE}{Tag 07JE}]{stacks-project}. Thus it is enough to show that, for each local section $b\otimes m$, sufficiently high iterates of every local differential operator send $b\otimes m$ into $p(D_T\otimes_{P_T}M)$. Choose local coordinates $t_1,\ldots,t_d$ on $S$ over $A$. Write
    $$\nabla(m)=\sum_i\theta_i(m)\otimes \dif(1\otimes t_i),\qquad
    \overline{\dif}\Phi_T(\dif(1\otimes t_i))=\sum_j c_{ij}\dif t_j,$$
    with $c_{ij}\in D_T$. Then, with respect to the basis $\dif t_j$, the connection operators of $\nabla_\Phi$ have the form
    $$\partial'_j=\partial_j^{D_T}+\sum_i c_{ij}(1\otimes\theta_i).$$
    Let $b\otimes m$ be a local section. By the same definition applied to the standard connection on the $p$-complete PD-envelope $D_T$, for each $j$ there are only finitely many $k$ such that $(\partial_j^{D_T})^k(b)\notin pD_T$. For the $p$-connection on $M$, the operators $\theta_i$ are topologically quasi-nilpotent, so for each $m$ there are only finitely many pairs $(i,k)$ such that $\theta_i^k(m)\notin pM$.

    Expand $(\partial'_j)^N(b\otimes m)$ as a sum over words of length $N$ in the two types of operators
    $$\partial_j^{D_T},\qquad c_{ij}(1\otimes\theta_i).$$
    Fix a bound $N_M$ such that any word in the finitely many operators $\theta_i$ of length at least $N_M$ sends $m$ into $pM$. Such a bound exists because the $\theta_i$ commute by integrability, and for each $i$ some high power $\theta_i^{N_i}$ sends $m$ into $pM$; one can take $N_M> \sum_i(N_i-1)$.
    It remains to consider words containing fewer than $N_M$ operators of the second type. For such a word, the coefficient of the resulting vector is obtained from $b$ and from fewer than $N_M$ of the elements $c_{ij}$ by applying a total of at least $N-N_M$ copies of $\partial_j^{D_T}$.
    By the quasi-nilpotence of the standard connection on the $p$-complete PD-envelope $D_T$ applied to the finite set consisting of $b$ and the $c_{ij}$, this coefficient lies in $pD_T$ once $N$ is sufficiently large. Hence, for $N\gg 0$, every summand of $(\partial'_j)^N(b\otimes m)$ lies in $p(D_T\otimes_{P_T}M)$. Therefore only finitely many pairs $(j,N)$ can satisfy
    $$(\partial'_j)^N(b\otimes m)\notin p(D_T\otimes_{P_T}M),$$
    which is precisely the condition in \cite[\href{https://stacks.math.columbia.edu/tag/07JE}{Tag 07JE}]{stacks-project}.

\end{proof}

For a $T=(S\twoheadrightarrow B)\in \mathcal{P}_B$, let $\mathbf{Strat}_{\Prism}(P_T)$ be the category of finite projective $P_T$-modules with prismatic stratification along the \v{C}ech nerve $P_T^\bullet$, where
$$P_T^n:=\Prism_{B^{(1)}/(S^{(1)})^{[n]}},$$
and $(S^{(1)})^{[n]}$ denotes the $(n+1)$-fold completed tensor product over $A$. Let $\mathbf{Strat}_{\mathrm{crys}}(D_T)$ be the category of finite projective $D_T$-modules with PD-stratification along the \v{C}ech nerve $D_T^\bullet$, where
$$D_T^n:=D_{S^{[n]}}(B).$$
Functoriality of $\Phi$ for the presentations $S^{[n]}\to B$ gives maps
$$\Phi_T^n:P_T^n\longrightarrow D_T^n$$
compatible with the cosimplicial structures.

\begin{defi}[Stratified inverse Cartier transform]\label{def:stratified-inverse-cartier}
    Define
    $$C^{-1}_{T,\mathrm{str}}:\mathbf{Strat}_{\Prism}(P_T)\longrightarrow \mathbf{Strat}_{\mathrm{crys}}(D_T)$$
    as follows. If $(M,\epsilon)$ is a prismatic stratification, set
    $$C^{-1}_{T,\mathrm{str}}(M):=D_T\otimes_{\Phi_T,P_T}M.$$
    The PD-stratification on this module is obtained by base changing $\epsilon$ along the maps $\Phi_T^n:P_T^n\to D_T^n$ for all $n$. The cocycle condition follows from the cocycle condition for $\epsilon$ and the cosimplicial compatibility of the maps $\Phi_T^n$.
\end{defi}

\begin{lemma}\label{lem:cartier-stratification-compatible}
    Let $(M,\epsilon)$ be an object of $\mathbf{Strat}_{\Prism}(P_T)$, and let $(M,\nabla)$ be the associated integrable topologically quasi-nilpotent $p$-connection. Let $\epsilon_\Phi$ be the PD-stratification on $D_T\otimes_{\Phi_T,P_T}M$ obtained from Definition \ref{def:stratified-inverse-cartier}. Under the crystalline-side equivalence between PD-stratifications and integrable topologically quasi-nilpotent connections \cite[\href{https://stacks.math.columbia.edu/tag/07JG}{Tag 07JG}, \href{https://stacks.math.columbia.edu/tag/07JH}{Tag 07JH}]{stacks-project}, the connection associated to $\epsilon_\Phi$ is exactly the connection $\nabla_\Phi$ of Definition \ref{def:local-inverse-cartier}.
    Consequently, Definitions \ref{def:local-inverse-cartier} and \ref{def:stratified-inverse-cartier} define the same functor for the fixed presentation $T$.
\end{lemma}

\begin{proof}
    Let $(M,\nabla)$ be the $p$-connection corresponding to a prismatic stratification $\epsilon$ on $M$. The stratified construction gives a PD-stratification on $D_T\otimes_{\Phi_T,P_T}M$ by base changing $\epsilon$ along the functorial maps $\Phi$ for the two-fold presentation $S^{[1]}\to B$.

    Both the connection $\nabla_\Phi$ of Definition \ref{def:local-inverse-cartier} and this base-changed stratification are integrable and topologically quasi-nilpotent, so it suffices to compare their first-order parts. Let $J_{\Prism}$ be the diagonal ideal of the first prismatic neighborhood of $P_T$, and let $J_{\mathrm{crys}}$ be the diagonal PD-ideal of the first crystalline neighborhood of $D_T$. Under the usual identifications
    $$J_{\Prism}/J_{\Prism}^{[2]}\cong \Omega_{P_T},
    \qquad
    J_{\mathrm{crys}}/J_{\mathrm{crys}}^{[2]}\cong \Omega_{D_T},$$
    the map induced by the divided Frobenius is exactly
    $$\overline{\dif}\Phi_T:\Omega_{P_T}\to\Omega_{D_T}.$$
    Hence the first-order part of the base-changed stratification sends $b\otimes m$ to
    $$b\cdot(\mathrm{id}_M\otimes\overline{\dif}\Phi_T)(\nabla m)
    +(1\otimes m)\otimes \dif b,$$
    which is precisely the formula defining $\nabla_\Phi$. This proves the compatibility.
\end{proof}

\begin{theo}\label{localOV}
    For every affine $X=\Spec(B)$ as above and every presentation $T=(S\twoheadrightarrow B)\in\mathcal P_B$, the functor
    $$C^{-1}_{T,\mathrm{str}}:\mathbf{Strat}_{\Prism}(P_T)\longrightarrow \mathbf{Strat}_{\mathrm{crys}}(D_T)$$
    is an equivalence of categories. Equivalently, for a presentation $T=(R\twoheadrightarrow B)$ and $\a=\ker(R\to B)$, the local lifted inverse Cartier transform 
    \[C^{-1}:
    p\text{-}\mathbf{MIC}^{\nil}_{\Prism_{R^{(1)}}(\a R^{(1)})}
    \longrightarrow
    \mathbf{MIC}^{\nil}_{D_R(\a)}\]
    is an equivalence between topologically quasi-nilpotent $p$-connections on $P_T=\Prism_{R^{(1)}}(\a R^{(1)})$ and topologically quasi-nilpotent connections on $D_T=D_R(\a)$.
\end{theo}

\begin{proof}
    Let $(N,\eta)$ be an object of $\mathbf{Strat}_{\mathrm{crys}}(D_T)$. Put $$D_T'{}^n:=D_T\widehat\otimes_{P_T}\cdots\widehat\otimes_{P_T}D_T
    \qquad (n+1\text{ factors}).$$

    First, we construct a cosimplicial homomorphism
    $$u_T^\bullet:D_T^\bullet\longrightarrow D_T'{}^\bullet.$$
    Using the pushout description in Lemma \ref{lem:presentation-frobenius-map}, we identify
    $$D_T'{}^n\cong
    S\widehat\otimes_{S^{(1)}}\cdots\widehat\otimes_{S^{(1)}}S
    \widehat\otimes_{S^{(1)}}P_T,$$
    with $n+1$ copies of $S$. Equivalently, grouping the last copy of $S$ with $P_T$ gives
    $$D_T'{}^n\cong
    S\widehat\otimes_{S^{(1)}}\cdots\widehat\otimes_{S^{(1)}}S
    \widehat\otimes_{S^{(1)}}D_T,$$
    with $n$ copies of $S$ before the final factor $D_T$.
    Let
    $$I_n:=\ker\left(S^{\widehat\otimes_{S^{(1)}}(n+1)}\to S\right)$$
    be the relative Frobenius diagonal ideal. This ideal has the standard $p$-adically continuous PD-structure: in an \'etale toric chart it is generated by the relative Frobenius diagonal differences, whose divided powers are $p$-adically integral, and the assertion follows by \'etale descent from this chart calculation. Hence $I_nD_T'{}^n$ is a PD-ideal.

    Let
    $$\Delta_n:D_T'{}^n\to D_T$$
    be the diagonal map. If $J_T\subset D_T$ denotes the PD-ideal defining $B$, then under the second description of $D_T'{}^n$ one has
    \begin{equation}\label{eq:preimage-pd-ideal-T}
    \Delta_n^{-1}(J_T)=
    I_nD_T'{}^n+
    S\widehat\otimes_{S^{(1)}}\cdots\widehat\otimes_{S^{(1)}}S
    \widehat\otimes_{S^{(1)}}J_T,
    \end{equation}
    with $n$ copies of $S$ before the final factor $J_T\subset D_T$. Indeed, after quotienting by $I_nD_T'{}^n$, the diagonal identifies the quotient with $D_T$, and the second summand maps onto $J_T$. The first summand in \eqref{eq:preimage-pd-ideal-T} is a PD-ideal by the preceding paragraph, and the second is the completed base change of the PD-ideal $J_T\subset D_T$. Their sum is a PD-ideal by the usual formula
    $$\gamma_m(x+y)=\sum_{a+b=m}\gamma_a(x)\gamma_b(y).$$
    Therefore the composite
    $$S^{[n]}\longrightarrow D_T'{}^n$$
    sends the kernel of $S^{[n]}\to B$ into a PD-ideal. By the universal property of the $p$-complete PD-envelope $D_T^n$, it extends uniquely to
    $$u_T^n:D_T^n\to D_T'{}^n.$$
    The uniqueness gives compatibility with coface and codegeneracy maps, so the $u_T^n$ form $u_T^\bullet$.

    Next, we descend the underlying module. Pulling the PD-stratification $\eta$ back along $u_T^\bullet$ gives a faithfully flat descent datum on the $D_T$-module $N$ relative to the finite faithfully flat map $\Phi_T:P_T\to D_T$. Hence there is a finite projective $P_T$-module $M$, unique up to unique isomorphism, and an isomorphism
    $$D_T\otimes_{\Phi_T,P_T}M\xrightarrow{\sim}N$$
    identifying the canonical descent datum with the pulled-back stratification.

    We now descend the PD-stratification. For every $n$, apply the same construction to the presentation $S^{[n]}\to B$. The map
    $$\Phi_T^n:P_T^n\to D_T^n$$
    is finite faithfully flat by Lemma \ref{lem:presentation-frobenius-map}. The pullback of $N$ to $D_T^n$ descends uniquely to a finite projective $P_T^n$-module. By uniqueness of faithfully flat descent and compatibility with the coface maps, this descended module is the pullback of $M$ to $P_T^n$. The isomorphisms defining $\eta$ therefore descend to isomorphisms between the corresponding pullbacks of $M$ over $P_T^1$, and the cocycle condition over $P_T^2$ follows after the faithfully flat base change to $D_T^2$. Thus $M$ acquires a prismatic stratification $\epsilon$.

    By construction, $C^{-1}_{T,\mathrm{str}}(M,\epsilon)\cong(N,\eta)$, so the functor is essentially surjective. Full faithfulness is the same descent argument applied to morphisms: a morphism between two objects after applying $C^{-1}_{T,\mathrm{str}}$ is a $D_T$-linear map compatible with the pulled-back descent data along $D_T'{}^\bullet$, hence descends uniquely to a $P_T$-linear map. Compatibility with stratifications descends because it can be checked after the faithfully flat base change $\Phi_T^n$ in each cosimplicial degree. Therefore $C^{-1}_{T,\mathrm{str}}$ is an equivalence.

    The equivalence with the connection-theoretic statement follows from Lemma \ref{lem:cartier-stratification-compatible}.
\end{proof}

\begin{proof}[Proof of Theorem \ref{equiv}]
    The construction above is local on $X$. Choose an affine open $\Spec(B)\subset X$ and a presentation $T=(S\twoheadrightarrow B)$ in $\mathcal P_B$. Theorem \ref{localOV} gives a canonical equivalence
    $$\mathbf{Crys}_{(B^{(1)}/A)_{\Prism}}\xrightarrow{\sim}\mathbf{Crys}_{(B/A)_{\mathrm{crys}}}.$$
    This equivalence is independent of the chosen presentation. If $T=(S\twoheadrightarrow B)$ and $T'=(S'\twoheadrightarrow B)$ are two choices, then
    $$T''=(S\widehat\otimes_A S'\twoheadrightarrow B)$$
    dominates both. The functoriality of the maps $\Phi_T$ and of the descent construction identifies the equivalences obtained from $T$ and $T'$ with the one obtained from $T''$.

    The equivalence is also compatible with restriction to principal opens. If $f\in B$ and $\tilde f\in S$ is a lift, then the $p$-complete localization $S\langle \tilde f^{-1}\rangle$ is again a smooth $p$-complete $\delta$-$A$-algebra presenting $B_f$. The prismatic envelopes, PD-envelopes, the maps $\Phi_T$, and the descent data are compatible with this localization. Hence the local equivalences agree on overlaps.

    We may therefore glue the equivalences over an affine open cover of $X$. Since the Frobenius twist is compatible with restriction to affine opens and the local-complete-intersection condition is preserved by the lemma at the beginning of this subsection, the glued functor gives a canonical equivalence
    $$\mathbf{Crys}_{(X^{(1)}/A)_{\Prism}}\cong \mathbf{Crys}_{(X/A)_{\mathrm{crys}}}.$$
\end{proof}
\section{Globalization}\label{Global}
Throughout this section, let $(A,\delta_A)$ be a crystalline prism and put $A_0=A/p$. Let $\X$ be a separated $A_0$-scheme of finite type. We assume that $\X$ admits a Koszul-regular closed immersion
$$\X\hookrightarrow Y$$
into a smooth formal $A$-scheme $Y$, and that $Y\times_{\Spec(A)}\Spec(A/p^2)$ is equipped with a Frobenius lift compatible with $\phi_A$.
\subsection{The ring sheaf $\Prism_Y(\X)$}
\begin{lemma}\label{globalCech}
    Let $(D,\delta)$ be a crystalline prism and let $f\in D$. Then the $p$-complete localization
    $$\widehat D_f:=D\left[\frac{1}{f}\right]^\wedge_p$$
    carries a unique $\delta$-structure extending the one on $D$. In the sequel we regard $\widehat D_f$ as a crystalline prism with this $\delta$-structure.
\end{lemma}
\begin{proof}
    Since $D$ is $p$-torsion free, so is $D[1/f]$. The Frobenius lift $\phi$ on $D$ extends to $D[1/f]$ because
    $$\phi(f)=f^p+p\delta(f)=f^p\left(1+\frac{p\delta(f)}{f^p}\right)$$
    is invertible in the $p$-complete localization: the second factor is a $p$-adic unit. Passing to the $p$-adic completion gives a continuous Frobenius lift on $\widehat D_f$. Since $\widehat D_f$ is $p$-torsion free, the formula
    $$\delta(x)=\frac{\phi(x)-x^p}{p}$$
    uniquely defines a compatible $\delta$-structure on $\widehat D_f$.
\end{proof}
For a $p$-complete ring $S$ and an element $f\in S$, we write
$$\widehat S_f:=S\left[\frac{1}{f}\right]^\wedge_p$$
for the $p$-complete localization. This construction depends only on the image of $f$ modulo $p$; hence $\widehat S_f$ is well defined when $f$ is specified as an element of $S/p$.
\begin{lemma}\label{localizationprism}
    Let $(R,\delta)$ be a crystalline prism, and let $\a\subset R$ be an ideal containing $p$. For every $f\in R$, there is a canonical isomorphism
    $$\widehat{\Prism_R(\a)}_f\cong \Prism_{\widehat R_f}(\a\widehat R_f).$$
\end{lemma}
\begin{proof}
    Both sides satisfy the same universal property. Namely, for any crystalline prism $(B,\delta_B)$, a map from either side to $B$ is equivalent to a $\phi$-compatible map $R\to B$ which sends $\a$ into $pB$ and sends $f$ to a unit of $B$; the latter condition is the universal property of $p$-complete localization. This canonically identifies the two completed prismatic envelopes.
\end{proof}
This localization result allows us to define the ring sheaf $\Prism_Y(\X)$ on $\X$ rather than on $Y$.
\begin{prop}\label{prismaticlocal}
    Let $U=\Spf(R)$ be an affine formal open subset of $Y$, and let $I\subset R$ be the ideal defining $\X\cap U$. Assume that the Frobenius lift on $Y/p^2$ lifts to a Frobenius lift $\phi:R\to R$.
    \begin{enumerate}
        \item[(1)] Let $f_1,\dots,f_r\in R$ satisfy
        $$f_1R+\cdots+f_rR+I=R.$$
        Then
        $$0\to \Prism_R(I)\to \prod_{i=1}^r\Prism_{\widehat R_{f_i}}(I\widehat R_{f_i})\rightrightarrows \prod_{i,j=1}^r\Prism_{\widehat R_{f_if_j}}(I\widehat R_{f_if_j})$$
        is an equalizer diagram, where the two arrows are the usual \v{C}ech restriction maps.
        \item[(2)] If $U'=\Spf(R')$ is another affine formal open subset of $Y$ such that $\X\cap U=\X\cap U'$, and if $I'\subset R'$ defines this closed subscheme, then there is a canonical isomorphism
        $$\Prism_R(I)\cong \Prism_{R'}(I').$$
    \end{enumerate}
\end{prop}
\begin{proof}
    For (1), choose elements $f_{r+1},\dots,f_s\in I$ such that
    $$f_1R+\cdots+f_sR=R.$$
    By Lemma \ref{localizationprism}, the usual sheaf property for the affine formal cover defined by $f_1,\dots,f_s$ gives the equalizer diagram
    $$0\to \Prism_R(I)\to \prod_{i=1}^s\Prism_{\widehat R_{f_i}}(I\widehat R_{f_i})\rightrightarrows \prod_{i,j=1}^s\Prism_{\widehat R_{f_if_j}}(I\widehat R_{f_if_j}).$$
    If $g\in I$, then $I\widehat R_g=\widehat R_g$, so the prismatic envelope $\Prism_{\widehat R_g}(I\widehat R_g)$ is the zero ring. Removing these zero terms gives the asserted equalizer for $f_1,\dots,f_r$.
    For (2), choose principal open covers of $U\cap\X=U'\cap\X$ which can be written both as $\Spf(\widehat R_{f_i})\cap\X$ and as $\Spf(\widehat R'_{g_i})\cap\X$. By Lemma \ref{localizationprism}, the corresponding localized prismatic envelopes are canonically isomorphic. Applying (1) to the two \v{C}ech diagrams shows that $\Prism_R(I)$ and $\Prism_{R'}(I')$ are equalizers of canonically isomorphic diagrams, hence are canonically isomorphic.
\end{proof}
\begin{theo}\label{prismaticglobal}
    There is a canonically defined sheaf of rings $\Prism_Y(\X)$ on the underlying topological space of $\X$ with the following properties.
    \begin{enumerate}
        \item[(1)] For every affine formal open $U=\Spf(R)\subset Y$ on which the Frobenius lift admits a lift $\phi:R\to R$, one has
        $$\Gamma(\X\cap U,\Prism_Y(\X))=\Prism_R(I),$$
        where $I\subset R$ is the ideal defining $\X\cap U$.
        \item[(2)] For every affine open $V\subset\X$, the ring $\Gamma(V,\Prism_Y(\X))$ is $p$-complete, and $\Prism_Y(\X)/p$ is a quasi-coherent $\inte_\X$-algebra. More explicitly, for every $f\in\Gamma(V,\inte_\X)$ there is a canonical isomorphism
        $$\widehat{\Gamma(V,\Prism_Y(\X))}_f\cong \Gamma(V_f,\Prism_Y(\X)).$$
        \item[(3)] For every affine open $V\subset\X$, the natural map
        $$\Gamma(V,\Prism_Y(\X))/p\to \Gamma(V,\Prism_Y(\X)/p)$$
        is an isomorphism.
    \end{enumerate}
\end{theo}
\begin{proof}
    Choose an affine formal open cover
    $$Y=\bigcup_{\lambda\in\Lambda}U_\lambda,\qquad U_\lambda=\Spf(R_\lambda),$$
    such that the given Frobenius lift modulo $p^2$ lifts on each $R_\lambda$ to a Frobenius lift $\phi_\lambda:R_\lambda\to R_\lambda$. Let $I_\lambda\subset R_\lambda$ be the ideal defining $\X\cap U_\lambda$.
    On each $\X\cap U_\lambda$ we first define a sheaf, still denoted $\Prism_{R_\lambda}(I_\lambda)$. For a principal open $V=\Spec((R_\lambda/I_\lambda)_{\bar f})\subset \X\cap U_\lambda$, choose a lift $f\in R_\lambda$ of $\bar f$ and set
    $$\Gamma(V,\Prism_{R_\lambda}(I_\lambda)):=\Prism_{\widehat{R_{\lambda,f}}}(I_\lambda\widehat{R_{\lambda,f}}).$$
    Lemma \ref{localizationprism} and Proposition \ref{prismaticlocal} show that this definition is independent of the choice of $f$ and satisfies the sheaf condition on the basis of principal opens. Thus it defines a sheaf on $\X\cap U_\lambda$ whose global sections are $\Prism_{R_\lambda}(I_\lambda)$.
    It remains to glue these local sheaves. Let $\Spf(C)\subset U_{\lambda_1}\cap U_{\lambda_2}$ be an affine formal open, and let $J\subset C$ be the ideal defining $\X\cap\Spf(C)$. Let $\phi_1,\phi_2:C\to C$ be the restrictions of the two local Frobenius lifts. Since both lift the same Frobenius modulo $p^2$, we have
    $$\phi_1\equiv\phi_2\pmod{p^2},\qquad\text{equivalently}\qquad \delta_1\equiv\delta_2\pmod p.$$
    Corollary \ref{closure mod p} gives a canonical isomorphism
    $$\Prism_{C,\phi_1}(J)\cong \Prism_{C,\phi_2}(J).$$
    By Proposition \ref{prismaticlocal}, these are precisely the restrictions of the two local sheaves to $\X\cap\Spf(C)$. Hence the displayed isomorphisms provide descent data on overlaps. The cocycle condition on triple overlaps follows from the canonicity in Corollary \ref{closure mod p}: all transition maps are induced by identifying the same $p$-complete subring of $C[1/p]$ generated by the elements $\delta^n(x/p)$.
    Therefore the local sheaves glue to a sheaf $\Prism_Y(\X)$ on $\X$. Property (1) is built into the construction. Property (2) follows from the localization isomorphism of Lemma \ref{localizationprism}, and property (3) follows because the sheaf was defined on the basis of principal opens by $p$-complete rings whose reductions modulo $p$ are compatible with the same principal localization.
\end{proof}
Taking relative formal spectra, we obtain the following corollary.
\begin{cor}
    There is a formal scheme, still denoted $\Prism_Y(\X)$, affine over $Y$, together with an affine morphism $\Prism_Y(\X)/p\to \X$, such that for every affine formal open $U=\Spf(R)\subset Y$ admitting a Frobenius lift, the preimage of $U$ is canonically isomorphic to $\Spf(\Prism_R(I))$, where $I\subset R$ defines $\X\cap U$. Moreover, the sheaf $\Prism_Y(\X)$ on $\X$ is the pushforward of the structure sheaf of this formal scheme.
\end{cor}
\begin{defi}
    Define
    $$\Omega_{\Prism_Y(\X)}^n:=\pi_*\pi^*\Omega_{Y/A}^n,$$
    where $\pi:\Prism_Y(\X)\to Y$ is the natural morphism. This is a locally free $\Prism_Y(\X)$-module, and the local $p$-differentials glue to maps
    $$p\dif:\Omega_{\Prism_Y(\X)}^n\to\Omega_{\Prism_Y(\X)}^{n+1}.$$
\end{defi}
\subsection[An independence property of D-Prism]{An independence property of $\D_\Prism$}\label{GlobalD}
We use the notation of Section \ref{Dprism}. Let $R$ be an affine local chart as above, and assume that $R$ carries two $\delta$-structures $\delta_1,\delta_2$ such that
$$\delta_1\equiv\delta_2\pmod p.$$
Let
$$P_i:=\Prism_{R,\delta_i}(\a),\qquad i=1,2.$$
By Corollary \ref{closure mod p}, $P_1$ and $P_2$ are canonically isomorphic as rings. Fix a prismatic crystal $\mathbb H$ on $(B/A)_{\Prism}$, and set $M_i:=\mathbb H(P_i)$.
For $i,j\in\{1,2\}$, let $P_{i,j}$ be the coproduct of $P_i$ and $P_j$ in $(B/A)_{\Prism}$, and set $M_{i,j}:=\mathbb H(P_{i,j})$. We use the analogous notation for triple coproducts. Since the product $\delta$-structures $\delta_i\otimes\delta_j$ are congruent modulo $p$, all rings $P_{i,j}$ with the same number of factors are canonically identified. In particular, we have a canonical isomorphism
$$\psi:M_{1,2}\xrightarrow{\sim}M_{1,1},$$
defined as the composite
$$M_{1,2}\cong P_{1,2}\otimes_{P_1}M_1
\cong P_{1,1}\otimes_{P_1}M_1
\cong M_{1,1},$$
where $P_1\to P_{1,2}$ and $P_1\to P_{1,1}$ are the maps into the first factor. The maps $P_2\to P_{1,2}$ and $P_1\to P_{1,1}$ into the second factor induce
$$\tau:M_2\to M_{1,2},\qquad \tau':M_1\to M_{1,1}.$$
\begin{rmk}
    If $a_1,\dots,a_h$ is a sequence with values in $\{1,\dots,k\}$, we write $P_{a_1,\dots,a_h}$ for the corresponding iterated coproduct. If $1\leq j_1<\cdots<j_s\leq h$, we write
    $$\nu_{*,\dots,*,a_{j_1},*,\dots,*,a_{j_s},*,\dots,*}:P_{a_{j_1},\dots,a_{j_s}}\to P_{a_1,\dots,a_h}$$
    for the map sending the $t$-th factor to the $j_t$-th factor. We use the same convention for the induced maps
    $$\eta_{*,\dots,*,a_{j_1},*,\dots,*,a_{j_s},*,\dots,*}:M_{a_{j_1},\dots,a_{j_s}}\to M_{a_1,\dots,a_h}.$$
\end{rmk}
\begin{theo}\label{globalDindependence}
    With the notation above:
    \begin{enumerate}
        \item[(1)] The maps $\tau$ and $\tau'$ are injective, and
        $$\psi\circ\tau(M_2)=\tau'(M_1)$$
        as submodules of $M_{1,1}$.
        \item[(2)] The map
        $$\iota_{2,1}:=\tau'^{-1}\circ\psi\circ\tau:M_2\to M_1$$
        is $P_2$-linear, where the $P_2$-module structure on $M_1$ is induced by the canonical ring isomorphism $P_2\cong P_1$.
        \item[(3)] The map $\iota_{2,1}$ is compatible with the stratifications attached to $M_2$ and $M_1$.
        \item[(4)] If $\delta_3$ is a third $\delta$-structure with $\delta_1\equiv\delta_2\equiv\delta_3\pmod p$, then
        $$\iota_{2,1}\circ\iota_{3,2}=\iota_{3,1}.$$
    \end{enumerate}
\end{theo}
\begin{proof}
    The injectivity of $\tau$ and $\tau'$ follows from the separatedness of the sheaf $\mathbb H$ applied to the covers $P_{1,2}\to P_2$ and $P_{1,1}\to P_1$. Let
    $$\beta:P_{1,2}\xrightarrow{\sim}P_{1,1},\qquad
    \beta_*:P_2\xrightarrow{\sim}P_1,\qquad
    \beta':P_{1,1,2}\xrightarrow{\sim}P_{1,1,1}$$
    be the canonical isomorphisms coming from Corollary \ref{closure mod p}. These isomorphisms are compatible with all insertions of factors; for instance
    $$\nu_{*,1,1}\circ\beta=\nu_{*,1,2}\circ\beta',
    \qquad
    \nu_{1,*,1}\circ\beta=\nu_{1,*,2}\circ\beta',
    \qquad
    \beta\circ\nu_{*,2}=\nu_{*,1}\circ\beta_*.$$
    Thus we have a morphism between the two equalizer diagrams defining the sheaf condition:
    $$\xymatrix{
    0\ar[r]& P_2\ar[d]^-{\beta_*}\ar[r]&P_{1,2}\ar[d]^-{\beta}\ar@<0.5ex>[r]\ar@<-0.5ex>[r]&P_{1,1,2}\ar[d]^-{\beta'}\\
    0\ar[r]& P_1\ar[r]&P_{1,1}\ar@<0.5ex>[r]\ar@<-0.5ex>[r]&P_{1,1,1}.
    }$$
    Applying the crystal $\mathbb H$ and using the crystal identifications gives the corresponding diagram for the $M$'s. It follows that $\psi\circ\tau(M_2)$ and $\tau'(M_1)$ are the same equalizer inside $M_{1,1}$. This proves (1).
    The same diagram also proves $P_2$-linearity of $\iota_{2,1}$ after identifying $P_2$ with $P_1$ through $\beta_*$. This proves (2).
    For (3), consider the stratification maps obtained by pulling $M_2$ and $M_1$ to $P_{2,2}$ and $P_{1,1}$. The canonical identifications of the coproducts give an isomorphism
    $$M_{2,2}\xrightarrow{\sim}M_{1,1}$$
    obtained through the chain
    $$M_{2,2}\cong P_{2,2}\otimes_{P_2}M_2
    \cong P_{1,2}\otimes_{P_2}M_2
    \cong M_{1,2}
    \cong P_{1,2}\otimes_{P_1}M_1
    \cong M_{1,1}.$$
    The compatibility of the canonical ring isomorphisms with insertion maps gives a commutative diagram
    $$\xymatrix{
    M_2\ar[d]\ar[r]^-{\iota_{2,1}}&M_1\ar[d]\\
    M_{2,2}\ar[r]^-{\sim}&M_{1,1}.
    }$$
    This is exactly the assertion that $\iota_{2,1}$ respects the two stratifications.
    Finally, (4) follows from the same functoriality for triple choices of $\delta$-structures. Over the common triple coproduct $P_{1,2,3}$, the three maps $\iota_{3,2}$, $\iota_{2,1}$, and $\iota_{3,1}$ are all induced by the same canonical identifications of prismatic envelopes. Hence their composites agree.
\end{proof}
\begin{theo}\label{globalD}
    Let $\X$ be a syntomic $A_0$-scheme admitting a closed immersion $\X\hookrightarrow Y$ into a smooth formal $A$-scheme. Assume that a Frobenius lift $\phi_Y$ on $Y/p^2$ compatible with $\phi_A$ is fixed. Then there is a rank-preserving equivalence of categories
    $$\mathbf{Crys}_{(\X/A)_{\Prism}}\cong p\text{-}\mathbf{MIC}^{\nil}_{\Prism_Y(\X)}.$$
\end{theo}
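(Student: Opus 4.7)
The plan is to glue the local equivalence constructed in Section \ref{Dprism} using the independence results developed in Section \ref{GlobalD} and the global construction of $\Prism_Y(\X)$ from Theorem \ref{prismaticglobal}. First, I choose an affine cover $Y=\bigcup_{\lambda\in\Lambda}U_\lambda$ with $U_\lambda=\Spf(R_\lambda)$ arranged so that: (i) each $R_\lambda$ admits an \'etale toric chart $\sq_\lambda\colon A\langle \underline{X}^{\pm1}\rangle\to R_\lambda$; (ii) the ideal $\a_\lambda\subset R_\lambda$ cutting out $\X\cap U_\lambda$ is generated by $p$ together with a $p$-complete intersection sequence, which is possible because $\X$ is syntomic and the complete intersection property for $\X\subset Y$ is Zariski local; and (iii) the restriction of $\phi_Y$ to $R_\lambda/p^2$ lifts to a Frobenius $\phi_\lambda\colon R_\lambda\to R_\lambda$, which follows from formal smoothness of $R_\lambda/A$ together with $p$-adic completeness. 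On each such $U_\lambda$, the local theorem of Section \ref{Dprism} produces a rank-preserving equivalence
$$\D_{\Prism,\lambda}:\mathbf{Crys}_{(\X\cap U_\lambda/A)_{\Prism}}\xrightarrow{\sim}p\text{-}\mathbf{MIC}^{\nil}_{\Prism_{R_\lambda}(\a_\lambda)}.$$

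Next, on an affinoid open $\Spf(C)\subset U_\lambda\cap U_\mu$ the two restrictions $\phi_\lambda|_C$ and $\phi_\mu|_C$ determine $\delta$-structures $\delta_\lambda,\delta_\mu$ on $C$ which agree modulo $p$, because both reduce to $\phi_Y|_{\Spec(C/p^2)}$ modulo $p^2$. Corollary \ref{closure mod p} identifies the two prismatic envelopes $\Prism_{C,\delta_\lambda}(\a_C)$ and $\Prism_{C,\delta_\mu}(\a_C)$ canonically, and applying the construction of Section \ref{GlobalD} to any $\mathds{H}\in\mathbf{Crys}_{(\X\cap\Spf(C)/A)_{\Prism}}$ produces a canonical isomorphism
$$\iota_{\mu,\lambda}\colon \D_{\Prism,\mu}(\mathds{H})|_C\xrightarrow{\sim}\D_{\Prism,\lambda}(\mathds{H})|_C$$
of $p$-connections, natural in $\mathds{H}$ and compatible with the structures of stratification by part (3) of the theorem in Section \ref{GlobalD}. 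On triple overlaps, part (4) of that same theorem yields the cocycle identity $\iota_{\nu,\lambda}\circ\iota_{\mu,\nu}=\iota_{\mu,\lambda}$.

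Using these cocycles and the sheaf structure of $\Prism_Y(\X)$ from Theorem \ref{prismaticglobal}, I glue the local categories $p\text{-}\mathbf{MIC}^{\nil}_{\Prism_{R_\lambda}(\a_\lambda)}$ into the global category $p\text{-}\mathbf{MIC}^{\nil}_{\Prism_Y(\X)}$, and transport the local equivalences $\D_{\Prism,\lambda}$ into a global functor
$$\D_\Prism\colon\mathbf{Crys}_{(\X/A)_{\Prism}}\to p\text{-}\mathbf{MIC}^{\nil}_{\Prism_Y(\X)}.$$
Fully faithfulness and essential surjectivity are Zariski local on $\X$, so they reduce to the local equivalence already established; rank preservation is similarly inherited from the local statement and the fact that rank is a local invariant.

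The main obstacle in this argument is the cocycle compatibility of the comparison isomorphisms $\iota_{\mu,\lambda}$ on triple overlaps, since without it one cannot glue the local equivalences consistently. Fortunately, this is exactly the content of part (4) of the theorem in Section \ref{GlobalD}, so the heavy lifting has already been done. A secondary point worth verifying carefully is the lifting of $\phi_Y|_{Y/p^2}$ to honest Frobenius lifts $\phi_\lambda$ on each affine piece; this is standard from the formal smoothness of $R_\lambda/A$ but must be invoked explicitly to apply the local construction of $\D_\Prism$, which assumes an ambient $\delta$-structure on $R_\lambda$ rather than merely on $R_\lambda/p^2$.
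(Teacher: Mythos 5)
Your overall strategy is the one the paper itself uses: the proof of Theorem \ref{globalD} is exactly a gluing of the local equivalence of Section \ref{Dprism}, with the overlap comparisons supplied by Corollary \ref{closure mod p} and the theorem of Subsection \ref{GlobalD} (including its cocycle property (4)), following the pattern of the proof of Theorem \ref{equiv}. Your points about choosing the cover, lifting $\phi_Y \bmod p^2$ to honest Frobenius lifts $\phi_\lambda$ on each affinoid by formal smoothness and $p$-adic completeness, and reducing rank preservation and full faithfulness to the local statement are all fine and match the intended argument.

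There is, however, one gap in the overlap step. On $\Spf(C)\subset U_\lambda\cap U_\mu$ the two local functors $\D_{\Prism,\lambda}$ and $\D_{\Prism,\mu}$ differ in \emph{two} ways: the Frobenius lifts $\phi_\lambda|_C$, $\phi_\mu|_C$, and the toric charts $\sq_\lambda|_C$, $\sq_\mu|_C$ used to pass from stratifications to $p$-connections. The machinery of Subsection \ref{GlobalD} is set up with the chart of Section \ref{toric} \emph{fixed} and only the $\delta$-structure varying; it gives you $\iota_{\mu,\lambda}$ as an isomorphism of modules compatible with the stratifications along the (canonically identified) \v{C}ech nerves. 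To upgrade this to an isomorphism of $p$-connections, as you assert, you must also know that the $p$-connection extracted from a stratification does not depend on the chart used to write it down. That is precisely the content of the subsection ``Independence of choice of chart'' (the identity $\theta=\theta'$, expressing $\nabla$ intrinsically via $J/J^{[2]}\cong D\otimes_R\hat{\Omega}_{R/A}$); alternatively one can absorb the chart discrepancy by the product-triple trick $(\sq\otimes\sq',R\hat{\otimes}_AR'\to B,\phi_R\otimes\phi_{R'})$ used in the proof of Theorem \ref{equiv}. Without invoking one of these, the claim that $\iota_{\mu,\lambda}$ identifies $\D_{\Prism,\mu}(\mathds{H})|_C$ with $\D_{\Prism,\lambda}(\mathds{H})|_C$ as objects of $p\text{-}\mathbf{MIC}^{\nil}$ is not justified; with it, your argument closes and coincides with the paper's.
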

\begin{proof}
    Choose an affine open cover of $\X$ by opens which admit toric charts and local lifts of $\phi_Y$ to the corresponding smooth formal affine pieces of $Y$. On each such open, Theorem \ref{localDprism} gives an equivalence between prismatic crystals and integrable topologically quasi-nilpotent $p$-connections on the local prismatic envelope.
    On an overlap, two choices of Frobenius lift are congruent modulo $p^2$ because they both lift the fixed $\phi_Y$ on $Y/p^2$. Therefore the associated $\delta$-structures are congruent modulo $p$. Theorem \ref{globalDindependence} identifies the corresponding local modules together with their stratifications, hence identifies the local $p$-connections. On triple overlaps, part (4) of Theorem \ref{globalDindependence} gives the cocycle condition. Thus the local equivalences glue to an equivalence over the sheaf $\Prism_Y(\X)$. Rank preservation is local on $\X$, so the glued equivalence is rank preserving.
\end{proof}
\subsection{Cohomological comparison: Globalization}
Let $\X_{\et}$ denote the \'etale site of $\X$. Recall that there is a morphism of topoi
$$\nu:\shv((\X/A)_{\Prism})\to \shv(\X_{\et})$$
constructed in \cite[Section 4]{Bhatt_2022}. We prove the following global comparison.
\begin{theo}\label{globalcohcompare}
    Let $\mathbb H\in\mathbf{Crys}_{(\X/A)_{\Prism}}$, and let $(\mathcal M,\nabla)$ be the corresponding object under the equivalence of Theorem \ref{globalD}. Then there is a canonical quasi-isomorphism
    $$R\nu_*\mathbb H\simeq \DR(\mathcal M,\nabla).$$
\end{theo}
Before the proof, we construct the global \v{C}ech object. For $n\geq 0$, let $P^n$ be the sheaf
$$P^n:=\Prism_{Y^{n+1}}(\X),$$
where $\X$ is embedded in $Y^{n+1}$ by the diagonal map. By functoriality, $P^\bullet$ is a cosimplicial sheaf of rings on $\X$. For a crystal $\mathbb H$, define a cosimplicial $P^\bullet$-module $\mathbb H^\bullet$ as follows. Locally on $Y$, after choosing a lift of $\phi_Y$, $\mathbb H(P^n)$ is the usual evaluation of $\mathbb H$ on the local prismatic envelope. On overlaps, Theorem \ref{globalDindependence} identifies the evaluations for different local Frobenius lifts, compatibly on triple overlaps. Hence these local evaluations glue to a sheaf, denoted $\mathbb H^n$, and the $\mathbb H^n$ form a cosimplicial module $\mathbb H^\bullet$.
\begin{lemma}\label{lem:global-cech-compute}
    There is a canonical quasi-isomorphism
    $$R\nu_*\mathbb H\simeq \mathrm{Tot}(\mathbb H^\bullet).$$
\end{lemma}
\begin{proof}
    The statement is local on $\X_{\et}$. On an affine chart where the Frobenius lift has been chosen, the object $P^0$ is a cover of the final object of the local prismatic topos by Theorem \ref{coverprism}. Therefore its \v{C}ech nerve $P^\bullet$ computes the derived global sections of the crystal. More explicitly, this is the same \v{C}ech--Alexander calculation as \cite[Proposition 3.10]{Tian_2023}: the local prismatic envelope is a cover of the final object, and the higher cohomology of a crystal on each term of the \v{C}ech nerve vanishes, so the \v{C}ech totalization computes $R\nu_*\mathbb H$.
    The local identifications are independent of the chosen lift of Frobenius by Theorem \ref{globalDindependence}. Hence they agree on overlaps and glue to the asserted global quasi-isomorphism.
\end{proof}
\begin{proof}[Proof of Theorem \ref{globalcohcompare}]
    By Lemma \ref{lem:global-cech-compute}, it remains to identify $\mathrm{Tot}(\mathbb H^\bullet)$ with $\DR(\mathcal M,\nabla)$. This can be checked on an affine open of $\X$ admitting a toric chart and a lift of $\phi_Y$. On such an open, Theorem \ref{localcoh} gives a canonical quasi-isomorphism
    $$\mathrm{Tot}(\mathbb H(P^\bullet))\simeq \DR(\mathcal M,\nabla).$$
    The proof of Theorem \ref{localcoh} gives an explicit bicomplex comparison. It is modeled on Tian's \v{C}ech--de Rham calculation and Wang's local calculation: the horizontal \v{C}ech direction computes the crystal by \cite[Proposition 3.10]{Tian_2023}, while the vertical direction contracts to the de Rham complex by the local calculation of Theorem \ref{localtotqi}.
    If two local Frobenius lifts are chosen on an overlap, Theorem \ref{globalDindependence} identifies the corresponding prismatic envelopes, the evaluated modules, and the stratifications defining the $p$-connections. Under these identifications the local bicomplexes used in Theorem \ref{localcoh} agree term by term. Consequently the local quasi-isomorphisms agree on overlaps. The cocycle condition on triple overlaps again follows from part (4) of Theorem \ref{globalDindependence}. Therefore the local quasi-isomorphisms glue to a canonical global quasi-isomorphism
    $$R\nu_*\mathbb H\simeq \DR(\mathcal M,\nabla).$$
\end{proof}

\bibliographystyle{alpha}
\bibliography{ref}

\end{document}